\documentclass{amsart}
\usepackage[utf8]{inputenc}

\usepackage{amsmath}
\usepackage{amssymb}
\usepackage{mathrsfs}
\usepackage{enumitem, hyperref}
\usepackage{changepage}
\usepackage{graphics}
\usepackage{pifont}
\usepackage{adjustbox}

\usepackage{tikz}
\usepackage{tikz-cd}
\usepackage{float}
\tikzcdset{scale cd/.style={every label/.append style={scale=#1},
    cells={nodes={scale=#1}}}}

\newtheorem{thm}{Theorem}[section]
\newtheorem{lem}[thm]{Lemma}

\newtheorem{prop}[thm]{Proposition}
\newtheorem{cor}[thm]{Corollary}

\theoremstyle{definition}
\newtheorem{defi}[thm]{Definition}

\newtheorem{ex}[thm]{Example}

\theoremstyle{remark}
\newtheorem{remark}[thm]{Remark}
\newtheorem{question}[thm]{Question}

\newtheorem{warning}[thm]{Warning}

\newcommand{\QM}{\mathbb{Q}}

\DeclareMathOperator{\Hom}{\mathcal{H}om}

\DeclareMathOperator{\End}{End}
\DeclareMathOperator{\Id}{\text{Id}}

\newcommand{\perf}{\mathrm{-Perf}}

\title{Geometric Extensions}
\author{Chris Hone, Geordie Williamson}
\date{June 2023}

\begin{document}

\maketitle

\begin{abstract}
  We prove that the derived direct image of the constant sheaf with
  field coefficients under
  any proper map with smooth source contains a canonical summand. This summand, which we
  call the geometric extension, only depends on the generic fibre. For
  resolutions we get a canonical extension 
  of the constant sheaf. When our coefficients are of characteristic
  zero, this summand is the intersection cohomology sheaf. When our
  coefficients are finite we obtain a new object, which provides
  interesting topological invariants of singularities and topological
  obstructions to the existence of morphisms. The geometric
  extension is a generalization of a parity sheaf.
  Our proof is
  formal, and also works with coefficients in modules over suitably
  finite ring spectra.
\end{abstract}

\section{Introduction}

This paper introduces \emph{geometric extensions}, which are
generalizations of intersection cohomology sheaves and parity
sheaves. We work in the setting of constructible sheaves on algebraic varieties, and show
that direct image sheaves along any resolution contain a canonical
direct summand which is independent of the resolution.\footnote{After
  having discovered this statement and its proof, we became aware of
  the McNamara's paper \cite[\S 5]{PM} where a statement equivalent to
  one of our
  main theorems is proved. Our proof is almost identical to
  McNamara's.} 
When our coefficients are $\QM$, this summand is the intersection
cohomology sheaf. When our coefficients are finite, we obtain a new object. Our proof is formal, and works more generally for proper maps with smooth source, and with coefficients in any suitably finite ring spectrum. The stalks of the geometric extension (with coefficients in finite fields and other ring spectra) provide subtle topological invariants of the singularities of algebraic varieties.

In order to motivate geometric extensions, we first recall the
traditional route to intersection cohomology extensions through
perverse sheaves. We then turn to an alternative approach via the
Decomposition Theorem, which will motivate the consideration of
geometric extensions. We then state our main result, and finally give
some motivation from modular representation theory, where geometric
extensions generalise the notion of a parity sheaf.

\subsection{Motivation from the Decomposition Theorem} \label{sec:motivation}
Let $Y$ be a complex algebraic variety, equipped with its classical (metric) topology. Inside the constructible derived category of sheaves of $\QM$-vector spaces on $Y$ there is a remarkable abelian category of \emph{perverse sheaves}, which is preserved by Verdier duality.  The abelian category of perverse sheaves is finite length and its simple objects are the intersection cohomology extensions of simple local systems on irreducible, smooth, locally closed subvarieties. Their global sections compute intersection cohomology. 

The central importance of intersection cohomology extensions becomes manifest in the Decomposition Theorem. The Decomposition Theorem states that for smooth $X$ and any proper morphism
\[
f : X \to Y
\]
of complex algebraic varieties the derived direct image $f_* \QM_X$ is
\emph{semi-simple}: isomorphic in the derived category to a direct sum
of shifts of intersection cohomology extensions of simple local systems on strata. The Decomposition Theorem implies that the intersection cohomology of $Y$ is a direct summand in the cohomology of any resolution. It is also implies (and generalizes) fundamental ideas in the topology of complex algebraic varieties like the local and global invariant cycle theorems, semi-simplicity of monodromy and Hodge theory \cite{BBD, dCM, Sa, W}.

The Decomposition Theorem also provides another route to intersection cohomology complexes. The constructible derived category is Krull-Schmidt: every object admits a decomposition into indecomposable summands, and this decomposition is unique. The Decomposition Theorem implies that if one considers all proper maps to $Y$ with smooth source
\[
  \begin{tikzpicture}[scale=.7]
    \node (x1) at (-2,2) {$X_1$};
    \node (x2) at (0,3) {$X_2$};
    \node (x3) at (2,2) {$X_3$};
    \node (y) at (0,0) {$Y$};
    \draw[->] (x1) to node[above] {\small $f_1$} (y);
    \draw[->] (x2) to node[right] {\small $f_2$} (y);
    \draw[->] (x3) to node[below,right] {\small $f_3$} (y);
  \end{tikzpicture}
  \]
  then the summands of the derived direct images $(f_i)_{*} \QM_{X_i}$ are of a special form: they are shifts of intersection cohomology complexes.

  This observation allows one to imagine an alternate version of history, where intersection cohomology complexes were discovered via the Krull-Schmidt theorem rather than through the theory of perverse sheaves\footnote{The expert will miss the adjective ``of geometric origin'' in this discussion. Everything we discuss in this paper will be ``of geometric origin''.}. It also naturally raises the following question:

  \begin{question} \label{q:summands}
    Let $\Lambda$ denote a ring, and let
    $\Lambda_X$ denote the constant sheaf on $X$ with coefficients in
    $\Lambda$. What can one say about the   summands of the derived
    direct image $f_* \Lambda_X$, for any resolution $f : X \to Y $?
    More generally, what can one say about the summands of $f_*
    \Lambda_X$ for any proper morphism with smooth source $f : X \to Y$?
  \end{question}

  This question should be considered the central motivation of this paper.

  By the proper base change theorem, the stalks of $f_* \Lambda_X$
  record the $\Lambda$-cohomology of the fibres of $f$. If Question
  \ref{q:summands} has an answer giving a small list of possible
  summands (as is the case with $\Lambda = \QM$, as implied by the
  Decomposition Theorem) then there are basic building blocks of the
  cohomology of morphisms, which only depend on $Y$ and not on the
  particular morphism. For example, the ``support theorems''
  (see e.g. \cite{BorhoMacPherson,ngo, MV}) show that the fibres of certain ``minimal''
  maps (e.g. the Grothendieck-Springer resolution or Hitchin fibration) are determined to a large extent
  by the base $Y$, and the generic behaviour of the map.

  \subsection{Main results}   Let $\Lambda$ denote a field or complete
  local ring. As above, $\Lambda_X$ denotes the constant sheaf on $X$
  with coefficients in $\Lambda$. We are able to give a partial answer to Question \ref{q:summands}. 
  Any resolution contains a canonical direct summand:

  \begin{thm}(see Theorem \ref{thm:main1}) \label{thm:main1-intro} Let $Y$ be an
    irreducible variety.
    There exists a complex $\mathscr{E}(Y,\Lambda) \in D^b_c(Y,\Lambda)$ characterised up to isomorphism by the following:
    \begin{enumerate}
    \item $\mathscr{E}(Y,\Lambda)$ is indecomposable and its support
      is dense;
    \item $\mathscr{E}(Y,\Lambda)$ is
    a summand inside $f_* \Lambda_X$, for any resolution $f: X \to Y$.
    \end{enumerate}
  \end{thm}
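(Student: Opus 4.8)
The plan is to construct $\mathscr{E}(Y,\Lambda)$ as a specific indecomposable summand picked out by a stalk/costalk condition at the generic point, and then prove it is common to all resolutions. First I would fix one resolution $f\colon X\to Y$ and decompose $f_*\Lambda_X$ into indecomposables using Krull–Schmidt (valid in $D^b_c(Y,\Lambda)$ since $\Lambda$ is a complete local ring, so endomorphism rings of indecomposables are local). Over a dense open smooth $U\subseteq Y$ on which $f$ is an isomorphism, $f_*\Lambda_X|_U \cong \Lambda_U$; hence exactly one indecomposable summand $\mathscr{E}$ of $f_*\Lambda_X$ restricts over (a possibly smaller) dense open to $\Lambda_U$, i.e.\ has dense support with generic stalk $\Lambda$ in degree $0$. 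This defines a candidate; properties (1) is then immediate, and I must prove (2): independence of $f$, and the characterization "up to isomorphism."

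**Independence of the resolution.** The key step is: given two resolutions $f_1\colon X_1\to Y$ and $f_2\colon X_2\to Y$, the corresponding summands $\mathscr{E}_1,\mathscr{E}_2$ are isomorphic. I would use the standard device of dominating both by a common resolution: choose $Z\to Y$ smooth and proper mapping to both $X_1$ and $X_2$ over $Y$ (resolve the closure of the graph of the birational map $X_1\dashrightarrow X_2$, then resolve that). So it suffices to treat the case of a proper birational morphism $g\colon X'\to X$ between smooth varieties, both mapping properly to $Y$, and show the $\mathscr{E}$-summand of $f_*\Lambda_X$ agrees with that of $(fg)_*\Lambda_{X'} = f_*g_*\Lambda_{X'}$. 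For this I want a canonical split injection $\Lambda_X \hookrightarrow g_*\Lambda_{X'}$ that is an isomorphism over a dense open, with a splitting $g_*\Lambda_{X'}\to \Lambda_X$ — the adjunction unit gives the map $\Lambda_X\to g_*\Lambda_{X'}$, and the point is to produce a retraction. Applying $f_*$ and comparing indecomposable decompositions via Krull–Schmidt, the unique dense-supported indecomposable summand with generic stalk $\Lambda$ must coincide on both sides. The retraction is the crux: I expect one builds it from a trace/transfer-type map, or more robustly, one argues that $\mathscr{E}_1$ is a summand of $f_*\Lambda_X$ for *every* $f$ by a symmetric domination argument — since $\mathscr{E}_1$ pulls back through $Z$ and $X_2$, its occurrence in $(f_2)_*\Lambda_{X_2}$ follows, and by uniqueness of the dense-supported generic-rank-one summand it equals $\mathscr{E}_2$.

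**The splitting and the main obstacle.** The main obstacle is exactly producing the canonical splitting of $\Lambda_X \to g_*\Lambda_{X'}$, since for non-$\QM$ coefficients we cannot invoke the Decomposition Theorem or weight arguments. The plan is to show the composite $\Lambda_X \to g_*\Lambda_{X'} \to \Lambda_X$ (second map some trace map, or obtained from properness via $g_!=g_*$ and a fundamental class) is multiplication by a unit in $\Lambda$ — or more carefully, to avoid needing integrality, to argue at the level of summands: let $\mathscr{E}'$ be the dense-supported generic-rank-one indecomposable summand of $g_*\Lambda_{X'}$; then $\mathscr{E}'$ restricted over the open where $g$ is an isomorphism is $\Lambda$, so the unit map $\Lambda_X\to g_*\Lambda_{X'}$, projected to $\mathscr{E}'$ and then composed with any splitting $\mathscr{E}'\hookrightarrow g_*\Lambda_{X'}$, is an isomorphism over a dense open, hence (since $\operatorname{Hom}(\Lambda_X,\Lambda_X)=\Lambda$ by connectedness and $\Lambda$ local) after inverting a unit gives $\Lambda_X$ as a summand of $g_*\Lambda_{X'}$, compatibly. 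Pushing forward and applying Krull–Schmidt identifies the $\mathscr{E}$-summands. I would also need to verify the characterization clause: any $\mathscr{F}$ satisfying (1)–(2) is isomorphic to $\mathscr{E}$ — this follows since $\mathscr{F}$, being a summand of some $f_*\Lambda_X$ with dense support, must by Krull–Schmidt be one of the indecomposable summands, and being indecomposable with dense support it is forced to have generic stalk $\Lambda$ (the generic fibre being a point over $U$), hence equals the distinguished summand. Finally, I'd remark that only Krull–Schmidt, proper base change, and the adjunction $(g^*,g_*)$ with $g_!=g_*$ for proper $g$ are used, so the argument extends to proper maps with smooth source and to ring-spectrum coefficients as claimed.
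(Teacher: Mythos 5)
Your proposal is correct, and it reconstructs the \emph{alternate} argument which the paper records in a remark at the end of \S 3 (attributed there to Bezrukavnikov), rather than the paper's main proof. The paper's own proof of Theorem~\ref{mainthm} never dominates two resolutions by a third: instead it builds a comparison morphism $f_*\mathbf{1}_X\to g_*\mathbf{1}_{X'}$ \emph{directly}, by transporting the fundamental class of the closure of the diagonal $\overline{\Delta_\alpha}\subset X\times_Y X'$ through the convolution isomorphism
\[
\Hom(f_!\mathbf{1}_X,g_*\mathbf{1}_{X'})\;\cong\;\Hom(\tilde g^*\mathbf{1}_X,\tilde f^!\mathbf{1}_{X'})\;\cong\;S^!_{2d_X}(X\times_Y X'),
\]
checks (Proposition~\ref{convisolocally}) that this is compatible with restriction to $U$, obtains by symmetry a map in the other direction restricting to the inverse isomorphism over $U$, and then invokes the Krull--Schmidt lifting lemma (Proposition~\ref{mutinv}) to match up the $U$-dense summands. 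Your route --- resolve the graph of the birational map to get $Z\to X_1,X_2$ proper birational with smooth source, split $\Lambda_{X_i}\hookrightarrow (g_i)_*\Lambda_Z$ via the trace, push forward, and use uniqueness of the dense generic-rank-one summand of $h_*\Lambda_Z$ --- is perfectly sound for the constant-sheaf case (Theorem~\ref{thm:main1}), and it is a genuine simplification there. What the paper's convolution argument buys is generality: it handles Theorem~\ref{thm:main2}, where $\mathscr{L}_*\mathbf{1}_V$ over $U$ need not be indecomposable, so the ``unique dense summand of the dominating pushforward'' step in your argument breaks down. Two smaller points: the retraction $g_*\Lambda_Z\to\Lambda_X$ in your key step is \emph{not} obtainable from Krull--Schmidt, proper base change and $g_!\cong g_*$ alone --- it requires the identifications $\Lambda_X\cong\omega_X[-2d]$, $\Lambda_Z\cong\omega_Z[-2d]$, i.e.\ the smooth-orientability axiom, so your closing sentence understates the hypotheses; and your ``argue at the level of summands'' variant does not actually avoid the trace, since to conclude that $\Lambda_X$ splits off $g_*\Lambda_Z$ you still need a morphism back, and that morphism is the trace.
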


  We call $\mathscr{E}(Y,\Lambda)$ the \textbf{geometric extension} on
  $Y$.

    \begin{remark}
    When $\Lambda = \QM$ then $\mathscr{E}(Y,\Lambda) = IC(Y,\QM)$,
    by the Decomposition Theorem (see Proposition \ref{ICprop}).
  \end{remark}

  The stalks of the geometric extension record behaviour which ``has
  to be there in any resolution''. Indeed, the proper base change
  theorem and Theorem \ref{thm:main1} immediately imply:

  \begin{cor} \label{cor:forced}
    Suppose $\Lambda$ is a field.
      For any resolution $f : X \to Y$ one has
\[
      \dim H^i(\mathscr{E}(Y,\Lambda)_y) \le \dim
      H^i(f^{-1}(y), \Lambda)\]
for any $y \in Y$.
\end{cor}

\begin{remark} \label{rem:nosemismall}
  This corollary can be used to rule out the existence of resolutions
  of a particular form. For example, if $\mathscr{E}(Y,\Lambda)_y$ has
  non-zero stalks in degrees $0$ and $2m$ for some $m$, then Corollary
  \ref{cor:forced} and the existence of fundamental classes implies that any
  resolution of $Y$ has to have fibres dimension at least $m$ over $y$
  (see Example \ref{ex:A/pm}). This can be used to prove the
  non-existence of semi-small
  resolutions: if $\mathscr{E}(Y,\Lambda)$ is not perverse, then no
  semi-small resolution of $Y$ exists (see Proposition \ref{prop:no-semi-small}).
\end{remark}

  \begin{remark}
    Theorem \ref{thm:main1} has also been obtained by McNamara \cite[\S 5]{PM},
    with a very similar proof. McNamara also noticed
    Remark \ref{rem:nosemismall} and uses this observation to rule out the
    existence of semi-small resolutions of certain Schubert varieties.
  \end{remark}

In the setting of Decomposition Theorem, it is
essential to take local systems into account. This is already the case
for a smooth morphism with smooth target $f : E \to X$, where the Decomposition Theorem
implies that $f_*\QM_E$ splits as a direct sum of its cohomology
sheaves $\mathcal{H}^i(f_*\QM_E)$, and each of the resulting local systems (with stalks
$H^i(f^{-1}(x),\QM)$) are semi-simple.

When we take more general coefficients, it is no longer true that the
direct image along a proper smooth morphism has to split, nor that the
resulting local systems are semi-simple. It is easy to produce
examples where the monodromy fails to be semi-simple when the
coefficients are not of characteristic $0$. The failure of the direct
image to split in the derived category is a little more
subtle. We give
examples of this failure to split for $\mathbb{P}^1$-bundles (where
the non-splitting is connected to the Brauer group) in Example
\ref{ex:nonss}.

This motivates us to consider geometric local systems. A
\textbf{geometric local system} $\mathscr{L}$ on $U$
is a smooth and proper map with smooth target\[V\xrightarrow{\mathscr{L}}U.\]
The following generalises Theorem \ref{thm:main1-intro} to take local systems into account:

  \begin{thm}(see Theorem \ref{thm:main2}) 
 \label{thm:main2-intro} Assume $Y$ is irreducible. For any dense (smooth) $U \subset Y$ and geometric local system $V
    \xrightarrow{\mathscr{L}}U$ there is a unique complex
    $\mathscr{E}(Y,\mathscr{L}) \in D^b_c(Y,\Lambda)$  satisfying:
    \begin{enumerate}
    \item $j^*\mathscr{E}(Y, \mathscr{L}) \cong
      \mathscr{L}_*\Lambda_U$ where $j : U \hookrightarrow Y$ denotes
      the inclusion;
      \item      $\mathscr{E}(Y, \mathscr{L})$ has no summands supported on the
      complement of $U$;
    \item for any proper map with smooth source $f : X \to Y$ which agrees with
      $\mathscr{L}$ over $U$, $\mathscr{E}(Y,\Lambda)$ occurs as a
      summand of $f_* \Lambda_X$. 
    \end{enumerate}
\end{thm}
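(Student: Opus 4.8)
The plan is to run a Krull--Schmidt argument in the spirit of the theory of parity sheaves. Since $\Lambda$ is a field or complete local ring, $D^b_c(Y,\Lambda)$ is Krull--Schmidt: every object is a finite direct sum of indecomposables with local endomorphism rings, and this decomposition is unique. For any proper map $f\colon X\to Y$ with $X$ smooth agreeing with $\mathscr{L}$ over $U$, I would decompose $f_*\Lambda_X$ into indecomposables and let $\mathscr{E}_f\in D^b_c(Y,\Lambda)$ be the sum, with multiplicities, of those summands whose restriction to $U$ is nonzero --- equivalently, those not supported on $Y\setminus U$. By proper base change $j^*(f_*\Lambda_X)\cong\mathscr{L}_*\Lambda_U$, and since the discarded summands restrict to $0$ on $U$ one gets $j^*\mathscr{E}_f\cong\mathscr{L}_*\Lambda_U$, so (1) holds for $\mathscr{E}_f$; moreover $\mathscr{E}_f$ has no summand supported off $U$, since any indecomposable summand of it is one of the chosen summands of $f_*\Lambda_X$ and hence restricts nontrivially to $U$, so (2) holds. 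Condition (3) holds by construction, provided $\mathscr{E}_f$ is independent of $f$ up to isomorphism. That independence is the whole content of the theorem; uniqueness will then follow formally.

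First one checks that at least one such $f$ exists, so that (3) is not vacuous and uniqueness has teeth: compactify $\mathscr{L}$ to a proper map $\bar{V}\to Y$ with $\bar{V}|_U\cong V$, then resolve $\bar{V}$ by a proper morphism that is an isomorphism over the smooth open $V\subseteq\bar{V}$. For independence, given two such maps $f\colon X\to Y$ and $g\colon X'\to Y$, build a common cover: fix identifications $f^{-1}(U)\cong V\cong g^{-1}(U)$ over $U$, let $Z\subseteq X\times_Y X'$ be the closure of the resulting copy of $V$ sitting diagonally over $U$, and resolve $Z$ by a proper morphism that is an isomorphism over the smooth open $Z|_U\cong V$. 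This produces a proper $h\colon X''\to Y$ with $X''$ smooth, agreeing with $\mathscr{L}$ over $U$, together with proper maps $a\colon X''\to X$ and $b\colon X''\to X'$ over $Y$ that are isomorphisms over $U$. By symmetry it then suffices to show $\mathscr{E}_h\cong\mathscr{E}_f$.

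This is where the trace trick enters. Replace $X$ by the union $X^\circ$ of its components meeting $f^{-1}(U)$ (the other components, if any, map into $Y\setminus U$, so $f_*\Lambda_{X\setminus X^\circ}$ is supported off $U$ and does not affect $\mathscr{E}_f$); then $a$ is, component by component, a proper birational morphism of smooth irreducible varieties. For such an $a\colon X''\to X^\circ$ one has $a^!\Lambda_{X^\circ}\cong\Lambda_{X''}\cong a^*\Lambda_{X^\circ}$ by Verdier duality and smoothness, so the composite of the unit $\Lambda_{X^\circ}\to a_*a^*\Lambda_{X^\circ}$ with the counit $a_!a^!\Lambda_{X^\circ}\to\Lambda_{X^\circ}$ is an endomorphism of $\Lambda_{X^\circ}$ which restricts to the identity over the dense open where $a$ is an isomorphism; since $\operatorname{End}(\Lambda_{X^\circ})=H^0(X^\circ,\Lambda)$ injects into $H^0$ of that dense open, this endomorphism is the identity, and $\Lambda_{X^\circ}$ is a direct summand of $a_*\Lambda_{X''}$. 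Writing $a_*\Lambda_{X''}\cong\Lambda_{X^\circ}\oplus C$ and restricting over $f^{-1}(U)$, where $a$ is an isomorphism, forces $C|_{f^{-1}(U)}=0$, so $C$ --- and hence $f_*C$ --- is supported on $Y\setminus U$. Applying $f_*$ gives $h_*\Lambda_{X''}\cong f_*\Lambda_{X^\circ}\oplus f_*C$, and passing to not-supported-off-$U$ parts the $f_*C$ term contributes nothing, so $\mathscr{E}_h\cong\mathscr{E}_f$. Set $\mathscr{E}(Y,\mathscr{L}):=\mathscr{E}_f$. For uniqueness: if $\mathscr{E}'$ satisfies (1)--(3), then by (3) each indecomposable summand of $\mathscr{E}'$ is a summand of $f_*\Lambda_X$, by (2) it restricts nontrivially to $U$, hence it occurs among the summands defining $\mathscr{E}(Y,\mathscr{L})$, with at least the same multiplicity; thus $\mathscr{E}'$ is a direct summand of $\mathscr{E}(Y,\mathscr{L})$, and the complementary summand restricts to $0$ over $U$ by (1), so it vanishes by (2) for $\mathscr{E}(Y,\mathscr{L})$, giving $\mathscr{E}'\cong\mathscr{E}(Y,\mathscr{L})$.

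The main obstacle is the geometric input of the second paragraph --- producing at least one $f$, and the common smooth cover $h$ with $a,b$ isomorphisms over $U$ --- which genuinely uses resolution of singularities; granting this, the rest is formal. The point requiring the most care is the trace computation: the identity $a^!\Lambda\cong a^*\Lambda$ for a proper birational morphism of smooth varieties, together with the fact that the unit--counit composite is detected on a dense open and therefore equals the identity. There is also some routine bookkeeping with a possibly disconnected $V$, reducible $X$ and $X''$, and components of $X$ or $X''$ lying over $Y\setminus U$; none of this affects $\mathscr{E}_f$, since such components only produce summands supported off $U$.
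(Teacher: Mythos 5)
Your proof is correct, but it takes a genuinely different route from the paper's. The paper's proof of Theorem~\ref{thm:main2} rests on Theorem~\ref{mainthm}, which constructs a comparison morphism $f_*\mathbf{1}_X\to g_*\mathbf{1}_{X'}$ by transporting the fundamental class of $\overline{\Delta_\alpha}\subset X\times_Y X'$ across the convolution isomorphism of Definition~\ref{conviso}, and then applies the Krull--Schmidt lifting lemma (Proposition~\ref{mutinv}) to conclude that the $U$-dense summands agree. You instead build a common resolution $X''$ dominating both $X$ and $X'$, and exploit the split injectivity of $\Lambda_{X^\circ}\to a_*\Lambda_{X''}$ for the proper birational morphism $a$ of smooth irreducible varieties; this is essentially the alternate argument attributed to Bezrukavnikov in the remark closing \S\ref{Mainarg}, which the paper states only for the constant sheaf. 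You have noticed, correctly, that the same trick covers geometric local systems, because the splitting is carried out over $X^\circ$ rather than over $Y$, so the local system data only enters through $f_*$ and never interferes. Your route is more elementary, bypassing the convolution isomorphism and the coherence diagrams of the Appendix; the paper's framework is more structural, giving comparison morphisms between arbitrary objects (not only direct images of constant sheaves) and isolating exactly what is needed from an abstract base change formalism. One small imprecision in your trace step: the identification $a^!\Lambda_{X^\circ}\cong\Lambda_{X''}$ is canonical only up to a unit of $\Lambda$, so the unit--counit composite on $\Lambda_{X^\circ}$ restricts over the dense open to $c\cdot\Id$ for some $c\in\Lambda^\times$, not necessarily to the identity; since $\End(\Lambda_{X^\circ})\cong\Lambda$ embeds into sections over the dense open, the composite equals $c\cdot\Id$ globally, which is still invertible, so the splitting conclusion stands.
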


We call $\mathscr{E}(Y, \mathscr{L})$ the \textbf{geometric extension}
of the geometric local system $\mathscr{L}$.

\begin{remark}\label{ICdef} We explain what Theorem \ref{thm:main2} says when
  $\Lambda = \QM$. By the (smooth case of the) Decomposition Theorem,
  $\mathscr{L}_*\QM_V$ is isomorphic to the direct sum of its intersection
  cohomology sheaves, $\bigoplus \mathscr{H}^i
  (\mathscr{L}_*\QM_V)[-i]$, and each cohomology sheaf $\mathscr{H}^i
  (\mathscr{L}_*\QM_V)$ is semi-simple. If we define
  $IC(\mathscr{L}_*\QM_V)$ to be $\bigoplus IC(Y, \mathscr{H}^i
  (\mathscr{L}_*\QM_V))[-i]$,  then $\mathscr{E}(Y,\mathscr{L}) = IC(Y, \mathscr{L}_*\QM_V)$,
    by the Decomposition Theorem.
  \end{remark}

  \begin{remark} Again, $\mathscr{E}(Y,\Lambda)$ provides lower bounds
    on the cohomology of any proper morphism extending
    $\mathscr{L}$. We leave it to the reader to formulate an analogue
    of Corollary \ref{cor:forced} in this more general setting.
  \end{remark}

  \begin{warning}
  In contrast to the setting over $\QM$, we prove that 
  $\mathscr{E}(Y,\mathscr{L})$ is not determined by its restriction to
  $U$. More precisely, using the Legendre family of elliptic curves, we produce two geometric local systems $V'
    \xrightarrow{\mathscr{L}'}U$ and $V''
    \xrightarrow{\mathscr{L}''}U$ which have the same monodromy over
    $\mathbb{F}_2$, but whose geometric extensions are not isomorphic (see
    Example \ref{ex:legendrefamily}).\end{warning}

  \subsection{Coefficients in ring spectra} One interesting aspect of
  the current paper is that the results are formal: we only need the
  proper base change theorem, the existence of fundamental classes, and some 
  finiteness to ensure the Krull-Schmidt theorem. (In the body of the
  paper we axiomatise our setup as a \textbf{base change formalism}, and
  prove our results in that setting.)

  Using our formalism, we deduce that our main theorems
  hold with coefficients in suitable stable $\infty$-categories. In
  case the reader (like the authors) is intimidated by this
  theory, we provide a few paragraphs of motivation as to why we are
  interested in this level of generality.\footnote{
  $\infty$-categories are not needed in any arguments
  in this paper. However, $\infty$-categories are needed in providing
  the input (the ``base change formalism'') with which we work.}

    A major theme in
    homotopy theory is the consideration of generalized cohomology
    theories like K-theory, elliptic cohomology, Brown-Peterson
    cohomology and the Morava K-theories. One can think about all
    of these cohomology theories as lenses through which to view
    homotopy theory: facts which are transparent in one theory are often
    opaque in another. Computation plays an enormously important role,
    and computations are often performed using the fact that any map is
    homotopic to a fibration, which gives rise to useful spectral sequences.

    In algebraic geometry, smooth morphisms (the
    algebraic geometer's fibrations) are extremely rare, and an
    important role is played by constructible sheaves and the six
    functor formalism. The spectral sequence of a fibration is
    replaced by the Leray-Serre spectral sequence, or its
    variants. The Decomposition Theorem is a very powerful tool, as it
    allows one to conclude that the perverse Leray-Serre spectral
    sequence degenerates for any proper map.
    Traditionally, this formalism only encompases
    cohomology, homology and its variants. The connection to
    cohomology is via the basic fact that the derived global sections
    of the constant sheaf compute cohomology.

    In homotopy theory, it has been clear for decades that one can obtain generalised
    cohomology as the global sections of a local object. (Indeed, by
    Brown representability, the generalized $E$-cohomogy of $X$ is given
    by homotopy classes of maps $[X, E^i]$, where $E^i$ represents
    $i^{th}$ $E$-cohomology.) Thus it is natural to ask:
    is there some theory of constructible sheaves, which would allow
    one to push and pull constant $E$-sheaves in much the same way
    that one can push and pull constant sheaves in algebraic geometry?
    Such a theory would unify the two approaches to cohomology of the
    proceeding two paragraphs.\footnote{For an excellent articulation
      of this question, see \cite{DanP}.}

    Building on the fundamental work of Lurie
    \cite{LurieHTT,LurieHA,LurieSAG}, such a theory has become
    available \cite{VolpeM}. We
    believe these more general coefficients (e.g. Morava
    K-theories) will provide a powerful tool to study torsion phenomena in the topology of
    complex algebraic varieties, in much the same way as they have done in homotopy
    theory. It is for this reason that we work in the generality of
    sheaves with coefficients in certain $\infty$-categories. (Again,
    we emphasise that we only need very formal properties from this
    theory, and none of its internals.) However,
    we do not discuss any computations with these more general objects
    in this paper. Geometric extensions in greater generality
    play an important role in forthcoming work of the first author
    \cite{Hone-thesis} and Elias and the second author
    \cite{EW-Satake}. The idea of taking summands in more general
    (motivic)     cohomology theories also shows up in the work of Eberhardt
    \cite{Eberhardt1, Eberhardt2}.

    \begin{remark}
      In \S\ref{sec:motivation} we discussed two routes to
      intersection cohomology sheaves: one via the theory of perverse
      sheaves (abelian categories), and one via the Decomposition
      Theorem and Krull-Schmidt (additive categories). In the setting
      of the more exotic 
      coefficients discussed above, one often encounters periodic
      cohomology theories. (This is the case for K-theory,
      as well as all the Morava K-theories.) It is interesting to
      note that (the homotopy category of) sheaves of modules over such
      spectra cannot support a non-trivial $t$-structure, so there is
      no analogue of perverse sheaves with these
      coefficients. Geometric extensions, on the other hand, make
      sense as long as the coefficients satisfy a Krull-Schmidt
      condition.
    \end{remark}
    
    \begin{remark} Above, our discussion centered on constructible
      sheaves (algebraic geometry) and generalized cohohomology
      (homotopy theory). Another major motivation for the development
      of a sheaf theory underlying cohomology theories is the theory
      of triangulated categories of motives (see \cite[Introduction \S A]{CD} for an
      excellent historical introduction).     Our results have a strong motivic flavour, as the reader may have
    already sensed in our definition of a geometric local system. It
    should be emphasised, however, that morphism categories in
    categories of motives rarely have the finiteness conditions that
    we need in this paper.
  \end{remark}

  \begin{remark}Ever since the discovery of intersection cohomology
    in the 1970s, it has been suggested that there should be a
    reasonable theory of intersection K-theory. Such a definition has
    recently been given by P{\u{a}}durariu \cite{IKT}, as a
    subquotient of a geometric filtration on K-theory. The notion of
    geometric extension with coefficients in (rationalised)
    $KU$-modules provides another possible definition of intersection
    K-theory (see Definition \ref{defKtheory}). It would be interesting to
    compare the two approaches.

    One can also hope that
    there is some (abelian, exact, triangulated, stable $\infty$,
    \dots) category $\mathcal{C}$ associated 
    to our space $X$ which categorifies intersection K-theory. The
    current work suggests a possible route towards such a category (at
    least in examples). Namely, intersection K-theory is realised as a
    summand inside the K-theory of any resolution, and the
    isomorphisms for different resolutions are sometimes realised by
    fundamental classes of correspondences. It would be very
    interesting to know if the classes realizing these isomorphisms
    could be lifted to functors, inducing categorical idempotents on
    categories of coherent sheaves on resolutions.
\end{remark}
  
  \subsection{Motivation from Modular Representation Theory}\label{sec:motivmodrep} A major
  motivation for the current work comes from geometric modular representation
  theory. In the work of Lusztig and others, geometric methods
  (e.g. Deligne-Lusztig theory, character sheaves, the Kazhdan-Lusztig
  conjecture) have  played a decisive role in classical
  (i.e. characteristic $0$) representation
  theory. Modular geometric representation theory aims to transport
  these successes to modular (i.e. mod $p$) representation theory (see
  \cite{JMW-survey,Achar,W-ICM}). 

  In this theory the notion of a \textbf{parity sheaf} has come to play a
  central role. These are sheaves whose stalks and costalks
  vanish in either even or odd degrees. In \cite{JMW} it is proved
  that on many varieties arising in geometric representation theory
  parity sheaves are
  classified in the same way as intersection cohomology
  complexes. Their importance in geometric modular representation
  theory appears to stem from two sources:
  \begin{enumerate}
  \item Whilst it is extremely difficult to compute with intersection
    cohomology sheaves with modular coefficients, computations with
    parity sheaves are sometimes possible, thanks to the role of  intersection forms \cite[\S 3]{JMW}. This
    computability is behind counter-examples to the bounds in
    Lusztig's conjecture arising from unexpected torsion
    \cite{W-explosion,Wtors} and the billiards conjecture of Lusztig
    and the second author \cite{LW}.
  \item When establishing derived equivalences, it is often useful to
    have a good class of generators whose algebra of extensions is
    formal. With $\QM$-coefficients, intersection cohomology complexes
    often provide such objects. When working with modular
    coefficients, parity sheaves seem to play the role of ``pure''
    objects, although it is still somewhat mysterious as to why (see
    \cite{RSW,ARKoszul,ARloop,AMRW}).
  \end{enumerate}

  The main theorem of \cite{JMW} relies crucially on 
  the vanishing of odd cohomology of the strata in a fixed
  stratification. These properties often hold in geometric
  representation theory, but can be a hindrance. For example, they
  can be destroyed by passing to a normal slice.

  Geometric extensions address this deficiency: 
  parity sheaves are very often geometric extensions. Consider a
  stratified variety $X = \bigsqcup X_\lambda$ satisfying the conditions
  of \cite[2.1]{JMW}, so that the notion of a parity sheaf makes
  sense. In almost all examples of parity sheaves (for the constant
  pariversity) one has nice\footnote{i.e. \emph{even} in the language
    of \cite[\S 2.4]{JMW}} resolutions
\[ \pi_\lambda : \widetilde{X}_\lambda \to \overline{X_\lambda} \]
such that the parity sheaf corresponding to the stratum $X_\lambda$ is
an indecomposable direct summand of $(\pi_\lambda)_*
\Lambda_{\widetilde{X}_\lambda}$. It follows from Theorem
\ref{thm:main1} that the parity sheaf coincides with the geometric
extension $\mathscr{E}(Y,\Lambda)$.

\begin{remark}
  As we remarked above, Theorems \ref{thm:main1} and \ref{thm:main2}
  provide a partial answer to our guiding Question
  \ref{q:summands}. Namely, indecomposable summands with dense support
  are geometric extensions. However, our theorems say nothing about what happens
  on lower strata. One could hope that they are geometric extensions,
  but we have very limited evidence for this claim. (The issue is that, in contrast to the situation for IC and parity sheaves, we have no characterisation of the geometric extension which
  is intrinsic to the space.) In the setting of parity sheaves (where one does have a
  local characterisation in terms of stalks and costalks) it is true that all
  summands are parity sheaves, which can be considered a weak form of
  the Decomposition Theorem.
\end{remark}

\subsection{Acknowledgements} We would like to thank Bhargav Bhatt for
helpful comments, in particular he suggested Example
\ref{ex:nonss}. We would like to thank Burt Totaro for useful comments
on Example \ref{ex:A/pm}. 
We would like to thank Roman Bezrukavnikov, Peter McNamara, Luca
Migliorini, Marco Volpe and Allen Yuan for useful discussions.

\section{Base change formalism}

In this section we will describe the categorical formalism used to prove Theorem 1. This section is purely $2$-categorical, and we shall proceed axiomatically to emphasise its formal nature. The reader comfortable with the formalism of the constructible derived category of sheaves will find nothing unfamiliar in what follows. 
\\
\\
From here, $\mathscr{C}$ will be a category with pullbacks and a terminal object $\ast$.

\begin{defi}\label{basechangeformalism}
A $\mathbf{base}$ $\mathbf{change}$ $\mathbf{formalism}$
$S:=(S_*,S_!)$ on $\mathscr{C}$ is a data of a pair of pseudo-functors
$S_*,S_!$ from $\mathscr{C}$ to the $2$-category $\mathbf{Cat}$, and a
lax natural transformation of pseudofunctors $c:S_!\rightarrow
S_*$. These two functors $S_*$ and $S_!$ strictly agree on objects,
and the object components of $c$ are the identity functor. For any
morphism $f$ in $\mathscr{C}$ we abbreviate $S_X=S_*(X)=S_!(X)$, $f_*
= S_*(f)$, $f_! = S_!(f)$, and $c_f:f_!\rightarrow f_*$ for the
component of $c$ at a morphism $f$ in $\mathscr{C}$. We require the
following: 
\begin{enumerate}\label{BC}
\item[(BC1)]
 For all morphisms $f$, $f_*$ admits a left adjoint $f^*$, and $f_!$ admits a right adjoint $f^!$.
\end{enumerate}
In view of (BC1), we say a morphism $f$ in $\mathscr{C}$ is $\mathbf{proper}$ if $c_f:f_!\rightarrow f_*$ is an isomorphism, and $\mathbf{\acute{e}tale}$ if there exists an isomorphism $f^*\cong f^!$.

For the remaining two conditions, we fix a pullback square:
\begin{equation}\tag{PS}
    \begin{tikzcd}
        X'\arrow[d,"\tilde{f}"]\arrow[r,"\tilde{g}"]&X\arrow[d,"f"]\\
        Y'\arrow[r,"g"]& Y
    \end{tikzcd}
\end{equation}
Our final conditions are the following:
\begin{enumerate}
\item[(BC2)]\label{BC2}
In (PS), if $f$ is étale (resp. proper), then $\tilde{f}$ is also étale (resp. proper).
\item[(BC3)] \label{BC3}
In (PS), the induced base change morphisms
\end{enumerate}
\[g^*f_*\rightarrow \tilde{f}_*\tilde{g}^*\]
\[\tilde{f}_!\tilde{g}^!\rightarrow g^!f_!\]
are both isomorphisms if $f$ is proper, or if $g$ is étale (see Remark \ref{basechangedef} for the definition of these base change morphisms).
\end{defi}

\begin{remark}
    In many settings (constructible sheaves on complex varieties,
    étale sheaves, $D$-modules,…) one encounters a ``6-functor formalism". Usually this manifests as a collection of
    triangulated (or $\infty-$) categories, with six functors \[f_*,
      f_!, f^!, f^!, \Hom, \otimes\] satisfying a raft of relations
    (see e.g. \cite{dCM}). There has been recent progress on
    axiomatizing what a six functor formalism is, particularly in the
    setting of $\infty-$categories (e.g. \cite{CD, mann, liuzheng,
      GR1}). As far as we are aware this process is ongoing
    and there is still no generally accepted definition. We need very
    little from the theory, and have tried to isolate the key features
    we require in Definition \ref{basechangeformalism}. The reader
    should have little trouble adapting other settings (e.g. étale
    sheaves, or $D$-modules) to our axioms. 
\end{remark}

The following will be a recurring example throughout this paper.
\begin{ex}\label{runningex}
  Let $\mathscr{C}$ be the category of complex algebraic varieties, and let $S_X=D^b_c(X,k)$ be the constructible derived category of sheaves on $X(\mathbb{C})$ with coefficients in a field $k$. (It is important that the derived category is used here, since $f^!$ does not exist in general as a functor on abelian categories.) 
  In this framework, the notions of étale and proper match their
  topological definitions, hence are closed under pullbacks giving
  (BC2). Our third axiom (BC3) goes under the name of proper base
  change in the literature (e.g. \cite[Proposition 2.5.11]{kashiwara2002sheaves}.)
\end{ex}

\begin{remark}\label{basechangedef}
Explicitly, the data of a base change formalism is an assignment of a category $S_X$ to each object $X$ in $\mathscr{C}$, functors $f_!:S_X\rightarrow S_Y$ and $f_*:S_X\rightarrow S_Y$ for each morphism $f:X\rightarrow Y$, and coherent isomorphisms $f_*\circ g_* \cong (f\circ g)_*$, $f_!\circ g_!\cong (f\circ g)_!$, along with natural transformations $c_f:f_!\rightarrow f_*$, satisfying some compatibilities. We will suppress these compatibility $2$-isomorphisms for $f_*$ and $f_!$, but the reader should bear in mind that they are a critical part of our input data, as they supply the middle maps used for the base change morphisms of (BC3):\[g^*f_*\xrightarrow{\eta} \tilde{f}_*\tilde{f}^*g^*f_*\cong \tilde{f}_*\tilde{g}^*f^*f_*\xrightarrow{\epsilon} \tilde{f}_*\tilde{g}^*\] \[\tilde{f}_!\tilde{g}^!\xrightarrow{\eta} \tilde{f}_!\tilde{g}^!f^!f_!\cong \tilde{f}_!\tilde{f}^!g^!f_!\xrightarrow{\epsilon} g^!f_!\]
\end{remark}
\subsection{The convolution isomorphism.}
\begin{defi}\label{conviso} Consider a pull-back square, with $g$ proper:
\[\begin{tikzcd}
    X\times_Y X'\arrow[d,"\tilde{g}"]\arrow[r,"\tilde{f}"]&X'\arrow[d,"g"]\\
    X\arrow[r,"f"]& Y
\end{tikzcd}\]
We have the following natural isomorphism of functors, which we call the \textbf{convolution isomorphism}:
\[\begin{tikzcd}
\Hom(f_!\_,g_*\_)\arrow[rr,"\tau_{f,g}","\sim"']&&\Hom(\tilde{g}^*\_,\tilde{f}^!\_)
\end{tikzcd}\]

This is defined as the composition of the following isomorphisms:
\begin{align*}
\Hom(f_!\_,g_*\_)\rightarrow \Hom(\_,f^!g_*\_)  \rightarrow
  \Hom(\_,f^!g_!\_) & \rightarrow \\ \rightarrow
  \Hom(\_,\tilde{g}_!\tilde{f}^!\_) & \rightarrow \Hom(\_,\tilde{g}_*\tilde{f}^!\_)\rightarrow \Hom(\tilde{g}^*\_,\tilde{f}^!\_).
\end{align*}
By evaluating this isomorphism on objects $F$ in $S_X$ and $G$ in $S_{X'}$, we obtain the pointwise convolution isomorphism:\[\Hom(f_! F,g_* G)\xrightarrow{\tau_{f,g}}\Hom(\tilde{g}^*F,\tilde{f}^!G)\]
\end{defi}

\begin{remark}
  In general, computing morphisms between the functors $f_!$ and $g_*$ on $Y$ is hard, and our convolution isomorphism transforms this into a problem with easier functors $\tilde{g}^*$ and $\tilde{f}^!$ on a more complicated space $X\times_Y X'$.
\end{remark}

In what follows, it will be important to be able to study the convolution isomorphism locally. Consider the following diagram, where $j:U\rightarrow Y$ is étale, and all squares are pullbacks:

\[
    \begin{tikzcd}
    X_U\times_U X'_U\arrow[dd,"\tilde{g}_U"]\arrow[rr,"\tilde{f}_U"]\arrow[dr,"\hat{j}"]&&X'_U\arrow[dd,"g_U" near start]\arrow[dr,"\underline{j}'"]&\\
    &X\times_Y X'\arrow[dd,"\tilde{g}" near start]\arrow[rr,"\tilde{f}" near start]&&X'\arrow[dd,"g"]\\
    X_U\arrow[rr,"f_U" near start]\arrow[dr,"\underline{j}"']&&U\arrow[dr,"j"]&\\
    &X\arrow[rr,"f"]&&Y
  \end{tikzcd}\]

The main observation of this section is that the convolution isomorphism is étale local:

\begin{prop}\label{convisolocally}
The following diagram commutes, where the horizontal maps are our convolution isomorphisms, and the vertical maps are restriction followed by base change:
\[\begin{tikzcd}
    \Hom(f_!\_,g_*\_)\arrow[r,"\tau"]\arrow[d]&\Hom(\tilde{g}^*\_,\tilde{f}^!\_)\arrow[d]\\
    \Hom(j^*f_!\_,j^*g_*\_)\arrow[d,"\simeq"]&\Hom(\hat{j}^*\tilde{g}^*\_,\hat{j}^*\tilde{f}^!\_)\arrow[d,"\simeq"]\\
    \Hom({f_U}_!\underline{j}^*\_,{g_U}_*{\underline{j}'}^*\_)\arrow[r,"\tau"]&\Hom({\tilde{g}_U}^*\underline{j}^*\_,{\tilde{f}_U}^!{\underline{j}'}^*\_)
\end{tikzcd}\]
\end{prop}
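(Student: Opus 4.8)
The plan is to unwind the convolution isomorphism $\tau_{f,g}$ of Definition~\ref{conviso} into its elementary constituents and to check that each one is compatible with étale restriction; stacking the resulting commuting squares then yields the proposition. The organising principle is the calculus of mates, and the only real content will be a pasting lemma for base change morphisms.

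Recall that $\tau_{f,g}$ is the composite of five natural isomorphisms: (i) the $(f_!,f^{!})$-adjunction isomorphism $\Hom(f_!\_,g_*\_)\cong\Hom(\_,f^{!}g_*\_)$; (ii) postcomposition with $f^{!}$ applied to $c_g^{-1}\colon g_*\xrightarrow{\sim}g_!$, invertible because $g$ is proper; (iii) the base change isomorphism $f^{!}g_!\cong\tilde{g}_!\tilde{f}^{!}$ of (BC3), valid since $g$ --- hence $\tilde{g}$, by (BC2) --- is proper; (iv) postcomposition with $c_{\tilde{g}}\colon\tilde{g}_!\xrightarrow{\sim}\tilde{g}_*$; and (v) the $(\tilde{g}^*,\tilde{g}_*)$-adjunction isomorphism $\Hom(\_,\tilde{g}_*\tilde{f}^{!}\_)\cong\Hom(\tilde{g}^*\_,\tilde{f}^{!}\_)$. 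Accordingly, the outer rectangle in the statement decomposes as a stack of five squares, one for each step, whose vertical arrows are the evident ``apply $j^*$ (resp.\ $\hat{j}^*$, $\underline{j}^*$, $\underline{j}'^{\,*}$), then base change'' maps; all the base change isomorphisms occurring on the vertical arrows are instances of (BC3) in the étale case, using (BC2) to see that $\hat{j},\underline{j},\underline{j}'$ are étale. It suffices to prove each of the five squares commutes.

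For the adjunction steps (i) and (v) this is the standard compatibility of an adjunction with pullback: the square commutes precisely because the base change isomorphisms for $f_!$ and for $f^{!}$ are mates of one another under the adjunctions $(f_!,f^{!})$ and $((f_U)_!,(f_U)^{!})$ --- and symmetrically for $\tilde{g}^*,\tilde{g}_*$ on the target side. This follows formally from the triangle identities together with the pseudofunctoriality coherences of $S_!$ and $S_*$; it is here that one uses that an étale morphism $j$ admits $j^*\cong j^{!}$, which is what makes the base change morphisms available and invertible in this configuration. For the steps (ii) and (iv) involving $c$ one uses that $c$ is a lax natural transformation of pseudofunctors: since $g$ is proper and $j$ is étale, every map in sight is an isomorphism, and $c_g,c_{g_U}$ (resp.\ $c_{\tilde{g}},c_{\tilde{g}_U}$) are intertwined by the base change isomorphisms for $g_*$ and for $g_!$. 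Unwinding the unit--counit description of the two base change morphisms (Remark~\ref{basechangedef}) together with the coherence datum of the lax natural transformation, this reduces to a finite diagram of coherence $2$-cells.

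The crux is step (iii): that the base change isomorphism $f^{!}g_!\cong\tilde{g}_!\tilde{f}^{!}$ is itself étale-local. Concretely, in the system of pullback squares extracted from the large cube preceding the statement one must show that the base change isomorphism attached to a composite of two pullback squares agrees with the composite of the base change isomorphisms attached to the two constituent squares --- one indexed by $j$, one by $f,g$. I expect this pasting compatibility to be the main obstacle: it is a genuine, if routine, verification that threads the units and counits of the $(f_!,f^{!})$-adjunctions through the associativity coherence isomorphisms of $S_!$ and $S_*$, of exactly the kind that is standard for six-functor formalisms but laborious to extract from the bare axioms (BC1)--(BC3). Granting this pasting lemma, the five squares commute and the proposition follows by stacking them.
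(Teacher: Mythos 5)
Your proposal is correct and follows essentially the same route as the paper's proof (given as Proposition \ref{DIAGRAM} in the Appendix): the paper expands the outer rectangle into a grid of small squares corresponding to the constituent isomorphisms of $\tau$, observes that the squares coming from the adjunction steps are standard mate/base change compatibilities, and isolates the middle column --- where the pullback base change isomorphism and the $c$-maps enter --- as the only non-routine step, discharging it by a large pasting diagram of units, counits, and pseudofunctoriality coherences. Your finer five-step decomposition and your identification of the ``pasting lemma for base change morphisms'' as the crux match the paper's argument in substance; both treatments defer the final coherence verification to a formal (if laborious) chase of 2-cells.
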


The verification of this proposition is deferred to Proposition \ref{DIAGRAM} in the Appendix.

\section{Orientations and Duality}
The key aspect of our main theorem (Theorem \ref{thm:main2}) is that for a map $f:X\rightarrow Y$ with smooth source, the dense summand of $f_*\mathbf{1}_X$ on $Y$ is determined by the generic behaviour of the map. To prove this, we will build a comparison morphism for two maps that agree on an open set. The convolution isomorphism of Definition \ref{conviso} turns this problem into giving a suitable morphism $\tilde{g}^*\mathbf{1}_{X}\rightarrow \tilde{f}^!\mathbf{1}_{X'}$. In this section we will describe how to produce such a map via cycle maps and fundamental classes.
\subsection{Topological reminders}
We begin with a leisurely topological reminder of these concepts in the constructible setting. The reader already familiar with this story is invited to skip to \S\ref{Mainarg}, in which we summarise the key idea of the paper.

Let our category $\mathscr{C}$ be that of complex algebraic varieties, and our base change formalism that of Example \ref{runningex}, i.e., $X\mapsto D^b_c(X,k)$. This base change formalism succinctly encodes the topological homology and cohomology of algebraic varieties with coefficients in $k$. We may express the following cohomology groups of $X$ in terms of our functors and the terminal map $t:X\rightarrow \ast$. Letting $\mathbf{1}$ be $k$ on the point $\ast$, we have
\begin{align}\label{Coh derived cat}
H^i(X,k)&\cong \Hom(\mathbf{1},t_*t^*\mathbf{1}[i]),\\
H^i_!(X,k)&\cong \Hom(\mathbf{1},t_!t^*\mathbf{1}[i]),\\
H_i(X,k)&\cong \Hom(\mathbf{1},t_!t^!\mathbf{1}[-i]),\\
H^!_i(X,k)&\cong \Hom(\mathbf{1},t_*t^!\mathbf{1}[-i]).
\end{align}
\begin{remark}
The reader will notice that we are not using the standard notation for
Borel Moore homology (see e.g. \cite{kashiwara2002sheaves}) and compactly supported cohomology. We have opted to use $!$ instead of $BM$ or $c$, as we find this notation more attractive, and it avoids poor notation later
when we discuss more general cohomology theories.
\end{remark}

In this setting, our categories $S_X$ carry some crucial extra structure. They are monoidal and triangulated, with monoidal unit $t^*\mathbf{1}$, and shift functor $[1]$. Throughout, we call $t^*\mathbf{1}$ the $\mathbf{constant}$ $\mathbf{sheaf}$ on $X$, denoted $\mathbf{1}_X$. Similarly, the object $t^!\mathbf{1}$ is the $\mathbf{dualising}$ $\mathbf{sheaf}$ of $X$, denoted $\omega_X$.

\begin{remark}
For a singular space $X$, the dualising sheaf $\omega_X$ is not generally concentrated in a single degree in $D^b_c(X,k)$, so cannot be interpreted as a sheaf in the usual sense.
\end{remark}

\begin{ex}\label{runningex2}
  In our running example of the constructible derived category $X
  \mapsto D^b_c(X,k)$ (Example \ref{runningex})
  $t^*\mathbf{1}$ is the constant sheaf $k_X$. When working in the
  setting of a general base change formalism, we will use $\mathbf{1}_X:= t^*\mathbf{1}$ to denote the unit object, however when dealing with
  sheaves we will often stick to the more standard $k_X$.
\end{ex}

A crucial property of these objects is that for a topological manifold $M$ of dimension $n$, the dualising sheaf $\omega_M$ is locally isomorphic to $\mathbf{1}_M[n]$, a shift of the constant sheaf. To see this, consider the standard triangle associated to the inclusion $j:M\setminus \{x\}\subset M$:
\begin{equation*}
j_!j^!\omega_M\longrightarrow \omega_M\longrightarrow {i_x}_*i_x^*\omega_M\longrightarrow j_!j^!\omega_M[1].
\end{equation*}
In view of \eqref{Coh derived cat}, applying $\Hom(\mathbf{1},t_!\_)$ gives the long exact sequence: \[H_i(M\setminus\{x\})\longrightarrow H_i(M)\longrightarrow \Hom(\mathbf{1},i_x^*\omega_M[-i])\longrightarrow H_{i-1}(M\setminus\{x\})\]
We may therefore identify the $-i^{th}$ cohomology of the stalk of $\omega_M$ at $x$ with the local homology group $H_i(M,M\setminus \{x\})$. Since $M$ is a manifold, it follows by a standard excision argument \cite[\S3.3]{hatcher}  that this sheaf has stalks $k$ concentrated in degree $-n$. By the local homogeneity of manifolds, we see that this sheaf is locally constant, and thus locally isomorphic to $\mathbf{1}_M[n]$.

This also shows that the manifold $M$ is $k$-orientable in the usual
sense (see e.g. \cite{hatcher}, Chapter 3) of having compatible local
generators of these homology groups if and only if we have an
isomorphism $\mathbf{1}_M[n]\cong\omega_M$ in $D^b_c(M,k)$. In view of
the definition of Borel-Moore homology \eqref{Coh derived cat}, this
is equivalent to a class in this group that restricts to a generator
of each local homology group. We call such a class in Borel-Moore
homology a $\mathbf{fundamental}$ $\mathbf{class}$ of $M$. 

There is another, more algebro-topological perspective on orientability. This more general notion of orientability is defined for vector bundles over arbitrary spaces. We say that an $n$-dimensional real vector bundle $V$ over $B$ is orientable with respect to a cohomology theory $E$ if there exists a Thom class $u$ in \[E^n_!(V)\cong E^n(D(V),S(V))\] that restricts to a generator of $E^n(D(V_x),S(V_x))$ for all $x$ in $B$, where $D(V)$ and $S(V)$ denote the associated disk and sphere bundles of $V$.

Thinking about other cohomology theories, there is an analogous base change formalism for algebraic varieties\footnote{There are significant point set topological requirements for the existence of the whole formalism, they are satisfied in our case since our spaces are locally compact and conically stratifiable, and all maps are suitably stratifiable. For a fixed stratification, see Lurie \cite{LurieHA}, and for the functors see Volpe \cite{VolpeM}. We aren't aware of a source for constructibility in this generality, though this should follow same lines as the constructibility proofs in \cite{kashiwara2002sheaves}.} for any cohomology theory represented by an $A_\infty$ ring spectrum $E$ \cite{VolpeM}. This category is the homotopy category of the $\infty$-category of constructible sheaves of $E$-module spectra on $X$, and we will denote\footnote{We have opted for this suggestive notation to encourage the parallel with the constructible derived category.} it by $D^b_c(X,E\perf)$. This category encapsulates the $E$-(co)homology of $X$ in exactly the same manner as $D^b_c(X,k)$ does for $k$-(co)homology. We say a manifold $M$ is $E$-orientable if we may find a fundamental class in \[E_{n}^{!}(M):=\Hom_{D^b_c(M,E\perf)}(\mathbf{1}_M,t^!\mathbf{1}[-n])\] restricting to a generator in all local homology groups $E_n(M,M\setminus \{x\})$. By the previous discussion, we see that $E$-orientability for $M$ is equivalent to the existence of an isomorphism between the $E$ dualising sheaf $\omega_M$ and $\mathbf{1}_M[n]$. An $E$-orientation is then a choice of such an isomorphism $\mathbf{1}_M\rightarrow \omega_M[-n]$. The set of orientations is in general an $\text{Aut}_{E_M}(\mathbf{1}_M)$ torsor, so orientations are not unique.
For a smooth manifold $M$, $E$-orientability in our sense is
equivalent to the Thom class $E$-orientability of the stable normal
bundle of $M$ \cite[Chapter 5, Theorem 2.4]{rudyak}.

\subsection{Orientability and fundamental classes for a general base change formalism.}
With this in mind, let us now work with an arbitrary base change formalism $S$ on the category of algebraic varieties. In order to emphasise the analogy with sheaves we will refer to objects of $S_X$ as sheaves. In addition, we assume the following conditions.

\begin{enumerate}
\item\label{C1} Each $S_X$ is triangulated with shift $[1]$.
\item\label{C2} Over a point, $S_\ast$ has a distinguished object $\mathbf{1}$.
\item\label{C3} For any irreducible $X$ with terminal map $t:X\rightarrow \ast$, the object $t^*\mathbf{1}$ is indecomposable.
\item Topologically proper (resp. \'etale) maps are proper (resp. \'etale) for $S$ in the sense of Definition \ref{basechangeformalism}.
\end{enumerate}
As before, we define the $\mathbf{constant}$ $\mathbf{sheaf}$ and $\mathbf{dualising}$ $\mathbf{sheaf}$:
\begin{align*}\mathbf{1}_X&:=t^*\mathbf{1}\\
\omega_X&:=t^!\mathbf{1}
\end{align*}

These will be the most important objects in what follows.

\begin{defi}\label{Orientable}
  An irreducible variety $X$ of dimension $d$ is $S$-smooth if there exists an isomorphism in $S_X$: 
  \[\mathbf{1}_X\rightarrow \omega_X[-2d].\]
\end{defi}

\begin{remark}
When $S_X=D^b_c(X,\mathbb{Q})$, then $S$-smoothness is the same as rational smoothness. More generally, when $S_X=D^b_c(X,k)$, $S$-smoothness of a variety is the same thing as $k$-smoothness (see e.g. \cite[\S 1]{JWKumar} and \cite[\S 8.1]{FWpsmooth}).
\end{remark}

\begin{remark}
    For a general multiplicative cohomology theory $E$, $X$ is
    $E$-smooth if and only if it is $E$-orientable. By our earlier
    discussion (see \cite[Chapter 5, Theorem
    2.4]{rudyak}), this is
    equivalent to the existence of Thom class in the $E$ cohomology of
    the Thom spectrum of the stable normal bundle of $X$. The Thom
    spectrum perspective helps make the problem of deciding
    $E$-smoothness more concrete and amenable to computation.
\end{remark}

\begin{defi}
  A base change formalism $S$ is \textbf{smoothly orientable} if all smooth irreducible varieties are $S$-smooth.
\end{defi}
\begin{ex}
  The constructible derived category (i.e. $X\mapsto D^b_c(X,k)$) is smoothly orientable. To see this, note that smooth varieties are topological manifolds of twice their algebraic dimension, so it suffices to check that they are $k$-orientable in the usual sense. This can be seen by noting that a smooth manifold is $\mathbb{Z}$-orientable if and only if some transition cocycle of its tangent bundle can be taken to have positive determinant. Since the tangent bundle of $X$ admits an almost complex structure, we can take a presenting cocycle where locally, these transition functions sit inside $GL_n(\mathbb{C})\subset GL_{2n}(\mathbb{R})$. Since $GL_n(\mathbb{C})$ is connected, all of these real matrices have positive determinant, giving the desired orientability.
\end{ex}
The following examples show that deciding smooth orientability can be subtle and geometrically meaningful.

\begin{ex}\label{3.9}
  Let $KU$ denote the spectrum representing the cohomology theory of complex K-theory. Then the base change formalism $X\mapsto D^b_c(X,KU\perf)$ is smoothly
  orientable. To see this, note that any real vector bundle admitting
  a complex structure is $KU$-orientable, via the explicit
  construction of a Thom class in \cite[III, \S~11]{AB-Clifford}.  The
  stable normal bundle of a complex manifold then admits a complex
  structure, so we see that the stable normal bundle of $X$ is $KU$
  orientable, so $X$ is $KU$-smooth.
\end{ex}

\begin{ex}
Let $\mathbb{S}$ denote the sphere spectrum. Then a manifold is
$\mathbb{S}$-orientable if and only if its stable normal bundle admits
a framing, that is, is trivialisable. In particular, most complex
algebraic varieties are not $\mathbb{S}$-smooth. For instance, if $X$
is a smooth surface, the first Pontryagin class of its tangent bundle
is equal to three times its signature, by Hirzebruch's Signature
theorem \cite{Hirzebruch}. So if $X$ has nonzero signature of its
intersection form, e.g., $\mathbb{CP}^2$, then its tangent bundle is
not stably trivial, so neither is its stable normal bundle. In
particular, the base change formalism of sheaves of $\mathbb{S}$-modules on all varieties is not smoothly orientable.\end{ex}

We are interested in invariants of singular varieties, so we want fundamental classes/orientations for singular varieties also. In usual sheaf theoretic fashion, for a Zariski open $j:U\rightarrow X$ in a space $X$, we refer to the functor $j^*\cong j^!$ as restriction to $U$. 

\begin{defi}\label{fundclass}
  An $S$-orientation of an irreducible variety $X$ is a morphism \[\gamma:\mathbf{1}\rightarrow \omega_X[-2d_X]\] which is an isomorphism over the smooth locus of $X$. We say $X$ is orientable with respect to $S$ if an $S$-orientation of $X$ exists.
\end{defi}

The following proposition shows that we can use resolution of singularities to orient all irreducible varieties.

\begin{prop}
If $S$ is smoothly orientable, and $X$ admits a resolution of singularities $f:\tilde{X}\rightarrow X$, then $X$ is orientable.
\end{prop}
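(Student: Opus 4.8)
The plan is to transport the fundamental class of the resolution down to $X$ via proper pushforward. Write $d=\dim X=\dim\tilde X$ and let $f\colon\tilde X\to X$ be the given resolution; we may and do assume that $f$ restricts to an isomorphism over the smooth locus $X^{\mathrm{sm}}\subseteq X$ (a suitable resolution has this property). Since $\tilde X$ is smooth and $S$ is smoothly orientable, Definition \ref{Orientable} provides an isomorphism $\iota\colon\mathbf{1}_{\tilde X}\xrightarrow{\ \sim\ }\omega_{\tilde X}[-2d]$ in $S_{\tilde X}$. Because $S_*$ and $S_!$ are pseudofunctors, passing to adjoints in the factorisation $t_{\tilde X}=t_X\circ f$ gives canonical identifications $\mathbf{1}_{\tilde X}\cong f^*\mathbf{1}_X$ and $\omega_{\tilde X}\cong f^!\omega_X$.

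Using these, I would define $\gamma$ as the composite
\[
\gamma\colon \mathbf{1}_X \xrightarrow{\ \eta\ } f_*f^*\mathbf{1}_X \xrightarrow{\ c_f^{-1}\ } f_!f^*\mathbf{1}_X \xrightarrow{\ f_!(\iota)\ } f_!f^!\omega_X[-2d] \xrightarrow{\ \varepsilon\ } \omega_X[-2d],
\]
where $\eta$ is the unit of the adjunction $f^*\dashv f_*$ from (BC1), $c_f\colon f_!\to f_*$ is the comparison map (an isomorphism since a resolution is proper), and $\varepsilon$ is the counit of $f_!\dashv f^!$. Equivalently, $\gamma$ is the image of $\iota$ under the adjunction isomorphism $\Hom_{S_{\tilde X}}(f^*\mathbf{1}_X,f^!\omega_X[-2d])\cong\Hom_{S_X}(f_!f^*\mathbf{1}_X,\omega_X[-2d])$ followed by precomposition with $\eta$; this is precisely the point at which properness is used. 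The result is a morphism $\mathbf{1}_X\to\omega_X[-2d_X]$, so it only remains to show it is an isomorphism over $X^{\mathrm{sm}}$.

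To check this, restrict along the open (hence étale) immersion $j\colon X^{\mathrm{sm}}\hookrightarrow X$ and let $\tilde f\colon f^{-1}(X^{\mathrm{sm}})\to X^{\mathrm{sm}}$ be the base change of $f$, which is proper by (BC2) and is an isomorphism by our assumption on $f$. Each arrow in the definition of $\gamma$ is compatible with $j^*\cong j^!$: the identifications $\mathbf{1}_{\tilde X}\cong f^*\mathbf{1}_X$, $\omega_{\tilde X}\cong f^!\omega_X$ and the morphism $\iota$ restrict to the corresponding data for $\tilde f$; the comparison $c_f$ and the commutation of $f_*$ and $f_!$ with $j^*$ are instances of (BC2) and (BC3) (using that $f$ is proper and $j$ is étale); and units and counits are natural. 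Hence $j^*\gamma$ is the morphism built from $\tilde f$ by the same recipe. Since $\tilde f$ is an isomorphism, its unit, counit and comparison map are isomorphisms, and $\iota$ restricts to an isomorphism, so $j^*\gamma$ is a composite of isomorphisms. Therefore $\gamma$ is an $S$-orientation of $X$.

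The substantive point is the bookkeeping of the previous paragraph: making precise that the construction of $\gamma$ is étale local, so that restricting $\gamma$ along $j$ yields the analogous morphism attached to $\tilde f$. This is in the same spirit as Proposition \ref{convisolocally}, but strictly simpler, as no convolution isomorphism is involved; it reduces to the naturality of the unit and counit of an adjunction together with (BC2) and (BC3). I would also record that if one only knows $f$ to be an isomorphism over some dense open $V$ not containing all of $X^{\mathrm{sm}}$, the same argument shows that $\gamma$ restricts to an isomorphism over $V$; to obtain an isomorphism over the entire smooth locus one invokes the existence of a resolution that is an isomorphism there.
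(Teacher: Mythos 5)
Your proposal is correct and takes essentially the same route as the paper: push the orientation of the smooth resolution forward, use properness to convert $f_*$ into $f_!$, close with the counit $f_!f^!\to\mathrm{Id}$, and verify the isomorphism over the smooth locus via (BC2)/(BC3). The only divergence is cosmetic (you insert $c_f^{-1}$ before applying $f_!(\iota)$, the paper applies $f_*\gamma$ first and then $c_f^{-1}$), and the two composites agree by naturality of $c_f$.
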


\begin{proof}
  Let $f:\tilde{X}\rightarrow X$ be a resolution of singularities. Then $f$ is proper, and $\tilde{X}$ is nonsingular, with $f$ an isomorphism over the smooth locus $U$ of $X$. Since $\tilde{X}$ is smooth, we have an orientation: \[\gamma:\mathbf{1}_{\tilde{X}}\rightarrow \omega_{\tilde{X}}[-2d_X].\] Pushing this forward gives:\[f_*\gamma:f_*\mathbf{1}_{\tilde{X}}\rightarrow f_*\omega_{\tilde{X}}[-2d_X].\]Composing with the unit and counits of our adjunctions gives \[\mathbf{1}_{X}\rightarrow f_*\mathbf{1}_{\tilde{X}} \rightarrow f_*\omega_{\tilde{X}}[-2d_X]\xrightarrow{\text{~}} f_!\omega_{\tilde{X}}[-2d_X]\rightarrow \omega_X[-2d_X].\]
  By base change (BC3), the composite $\gamma_X:\mathbf{1}_X\rightarrow \omega_X[-2d_X]$ restrict to isomorphisms over $U$, giving the desired orientation of $X$.
\end{proof}
\begin{remark}
The definitions of this section are based on purely topological realisation of algebraic varieties via their $\mathbb{C}$-points, but there are natural extensions of these definitions to other settings. For instance, one could consider real pseudomanifolds or algebraic varieties over fields more general than $\mathbb{C}$. In these settings, one would need to modify Definition \ref{Orientable} to reflect the structure at hand. For example, incorporating weights, an orientation is a morphism $\mathbf{1}_X\rightarrow \omega_X[-2d_X](d_X)$, where $(n)$ denotes the Tate twist.
\end{remark}
\begin{remark}
  For the reader who doesn't want to assume resolution of singularities, one can adapt the previous proof to show that if $X$ admits a degree $n$ alteration in the sense of de Jong \cite{deJong}, and $n$ is invertible in the ring $\Hom_{S_\ast}(\mathbf{1},\mathbf{1})$, then $X$ is orientable. 
\end{remark}
The importance of orientations cannot be overstated in our context, since they allow us to produce morphisms between nontrivial objects in our categories $S_X$, via functoriality and the convolution isomorphism.

\begin{defi}\label{BorelMoore}
  For $X$ an algebraic variety, we define the $n^{th}$ compactly supported $S$-homology of $X$ to be \[S^!_{n}(X):=\Hom_{S_X}(1_X,\omega_X[-n]).\]
\end{defi}
This functions similarly to Borel-Moore homology in the constructible setting, as the codomain of a cycle class morphism. In particular, any orientation $\gamma$ of an irreducible variety $X$ is naturally an element of $S^!_{2d_X}(X)$.
Like Borel-Moore homology, these groups are covariantly functorial under proper maps $f:X\rightarrow Y$. This is given by the composition
\[\Hom^*(\mathbf{1}_X,\omega_X)\rightarrow \Hom^*(f_*\mathbf{1}_X,f_*\omega_X)\cong \Hom^*(f_*\mathbf{1}_X,f_!\omega_X)\rightarrow \Hom^*(\mathbf{1}_Y,\omega_Y).\]

\subsection{Why do geometric extensions exist?}\label{Mainarg}
Our goal is to construct a canonical extension of the constant sheaf on a potentially singular variety $Y$. We construct this by first pushing forward the constant sheaf from a resolution of singularities $X\rightarrow Y$. We then need a method for comparing these sheaves for different choices of resolution. We will construct a comparison morphism between these pushforwards using the existence of fundamental classes in our base change formalism.

We may summarise the machinery we have so far for a smoothly orientable base change formalism $S$ as follows.

\begin{itemize}
    \item An $S$ internal notion of smoothness (Definition \ref{Orientable}).
    \item An $S$ orientation/fundamental class for any variety (not necessarily smooth) (Definition \ref{fundclass}).
    \item A compactly supported $S$-homology group to interpret fundamental classes in (Definition \ref{BorelMoore}). \
\end{itemize}

We may now  interpret our convolution isomorphism in this context. Let $X$ and $X'$ be smooth (proper) resolutions of $Y$, with a chosen orientation of $X'$, such that we have a pullback square: 
\begin{equation} 
\begin{tikzcd}
X\times_Y X'\arrow[d,"\tilde{g}"]\arrow[r,"\tilde{f}"]&X'\arrow[d,"g"]\\
  X\arrow[r,"f"]& Y
\end{tikzcd}
\end{equation}
We will use the functoriality of our setup to construct morphisms between $f_!\mathbf{1}_X$ and $g_*\mathbf{1}_{X'}$, from the geometry of fundamental classes on the fibre product. Specifically, the convolution isomorphism and our choice of orientation of $X$ yields the following isomorphism: \begin{align*}
\Hom(f_!\mathbf{1}_X,g_*\mathbf{1}_{X'})&\cong \Hom(\tilde{g}^*\mathbf{1}_X,\tilde{f}^!\mathbf{1}_{X'})\cong\\
&\cong \Hom(\mathbf{1}_{X\times_Y X'},\tilde{f}^!\omega_{X'}[-2d_{X}])=S^!_{2d_{X}}(X\times_Y X').
\end{align*}
Via this isomorphism, we may translate compactly supported $S$-homology classes of $X\times_Y X'$ into maps from $f_!\mathbf{1}_X$ to $g_*\mathbf{1}_{X'}$.

Since $f,g$ are resolutions of $Y$, if $U$ is smooth locus of $Y$, we have a canonical diagonal $\Delta(U)$ inside $X\times_Y X'$, with closure $Z:=\overline{\Delta(U)}$. Choosing an orientation of $Z$, we may push forward the associated fundamental class (\ref{BorelMoore}) to get a class in $S_{2d}^!(X\times_Y X')$. This gives the desired comparison morphism between $f_!\mathbf{1}_X$ and $g_*\mathbf{1}_Y$. 

In the next section, we will show that in the presence of finiteness conditions, we may deduce an isomorphism between the ``dense'' summands of these pushforwards, giving our main theorem. 

\begin{remark}
  In the special case of the constant sheaf, there is an alternate
  argument\footnote{We learnt this argument from Roman Bezrukavnikov.}. Take our base change formalism to be $X\mapsto D^b_c(X,E\perf)$, for a suitably finite (see Definition \ref{KS-definition}) smoothly orientable cohomology theory $E$. One may show (see \cite[Chapter 5, Theorem 2.13]{rudyak}) that for a
map $f:X\rightarrow Y$ of $E$-orientable manifolds, the induced map
$E^*(Y)\rightarrow E^*(X)$ is injective, and upgrade this to the fact
that $\mathbf{1}_Y\rightarrow f_*\mathbf{1}_X$ is split injective in
$D^b_c(X,E\perf)$. Now for
singular $Y$, given two such resolutions $X_i$, we may resolve the diagonal component of
their fibre product $X_1\times_Y X_2$. From this splitting of the
constant sheaf for maps of $E$-orientable smooth manifolds, we see
that the dense summand of $f_{i*}\mathbf{1}_{X_i}$ occurs as a summand of
all resolutions. This argument also shows that isomorphism classes of summands of $f_*\mathbf{1}_X$ over all resolutions $f : X
  \to Y$ form a sort of ``lattice'': given any two resolutions $f_i :
  X_i \to Y$ for $i = 1, 2$, there exists a third resolution $g : Z
  \to Y$ such that all summands of $f_{1*}\mathbf{1}_{X_1}$ and
  $f_{2*}\mathbf{1}_{X_2}$ also occur inside $g_*\mathbf{1}_Z$.
\end{remark}

\section{Finiteness and Krull-Schmidt categories}\label{sec-KS}

In the previous section we constructed a comparison morphism between $f_*\mathbf{1}_X$ and $g_*\mathbf{1}_{X'}$ using an orientation of the irreducible component of the diagonal within $X\times_Y X'$. In any smoothly orientable base change formalism, it follows formally that this comparison morphism is an isomorphism over $U$. In this section, we will introduce the finiteness conditions needed to show that this isomorphism over $U$ lifts to an isomorphism on ``dense summands'' of $f_*\mathbf{1}_X$ and $g_*\mathbf{1}_{X'}$. The finiteness constraint we need is that the categories of the base change formalism are Krull-Schmidt, which allows the use of the crucial Lemma \ref{isolift}.

For completeness, we recall the definition of a Krull-Schmidt category:

\begin{defi}\label{KS-definition}
A category $\mathscr{C}$ is \textbf{Krull-Schmidt} if it is additive with finite sums, and each object is isomorphic to a finite direct sum of indecomposable objects, each with local endomorphism rings. A functor $F$ between Krull-Schmidt categories is Krull-Schmidt if for each indecomposable object $A$, $F$ maps the Jacobson radical of $\End(A)$ into the Jacobson radical of $\End(F(A))$.
\end{defi}

\begin{remark}
There does not appear to be an agreed upon notion of a Krull-Schmidt functor in the literature. The above definition seems to encapsulate the necessary behaviour of our situation.\end{remark}

This condition is easily checked in some sheaf theoretic contexts since it is implied by the following three conditions:

\begin{itemize}
    \item The ``ring of coefficients" $R:=\End(\mathbf{1}_\ast)$ is a complete local ring.
    \item For any $\mathscr{F},\mathscr{G}$ in $S_X$, the group $\Hom_{S_X}(\mathscr{F},\mathscr{G})$ is a finitely-generated $R$-module.
    \item The category $S_X$ has split idempotents.
\end{itemize}

These conditions imply that the endomorphism ring of any indecomposable object is a local $R$-algebra. It also implies that the $R$-algebra morphisms $\End(X)\rightarrow \End(F(X))$ are all finite, giving the Krull-Schmidt property on the functors of the base change formalism.

In particular these conditions are satisfied for the constructible
base change formalism $X\mapsto D^b_c(X,\Lambda)$, when $\Lambda$  is
a field or complete local ring. We will discuss this case in
  more detail in the appendix.

The primary result about Krull Schmidt categories we will use is the following automorphism lifting property.

\begin{lem}\label{isolift}
    Let $F:\mathscr{C}\rightarrow \mathscr{D}$ be a Krull-Schmidt functor between Krull-Schmidt categories, let $A$ be an object with $F(A_i)\neq 0$ for all nonzero summands $A_i$ of $A$, and let $\mu:A\rightarrow A$ an endomorphism of $A$. If $F(\mu)=\text{Id}_{F(A)}$, then $\mu$ is an isomorphism.
\end{lem}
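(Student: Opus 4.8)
The plan is to decompose $A$ into indecomposables and analyze $\mu$ block-by-block, using that a Krull-Schmidt functor detects the radical. First I would write $A \cong \bigoplus_{i=1}^n A_i$ with each $A_i$ indecomposable and $\End(A_i)$ local. Then $\End(A)$ is the matrix algebra whose $(i,j)$-entry is $\Hom(A_j, A_i)$, and $\mu$ corresponds to a matrix $(\mu_{ij})$ with $\mu_{ij} \in \Hom(A_j, A_i)$. The key is to show $\mu$ is an isomorphism, for which (since $\End(A)$ is semiperfect) it suffices to show that the image of $\mu$ in $\End(A)/\mathrm{rad}(\End(A))$ is invertible. Concretely, I want to show that each diagonal block $\mu_{ii} \in \End(A_i)$ is a unit (i.e. not in the radical of the local ring $\End(A_i)$), because then a standard argument — $\mu$ is ``block upper-triangular modulo radical'' after grouping isomorphic summands, with invertible diagonal — forces $\mu$ to be invertible.

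The heart of the argument is the following: suppose some diagonal block $\mu_{ii}$ lies in $\mathrm{rad}(\End(A_i))$. Since $A_i$ is a nonzero summand of $A$, the hypothesis gives $F(A_i) \neq 0$; also $F(A_i)$ is a summand of $F(A)$, and I would further reduce (by refining the decomposition of $F(A_i)$) to the case where we track an indecomposable summand $B$ of $F(A_i)$. Because $F$ is a Krull-Schmidt functor, it carries $\mathrm{rad}(\End(A_i))$ into $\mathrm{rad}(\End(F(A_i)))$. But $F(\mu) = \Id_{F(A)}$, and restricting/projecting to the summand $F(A_i)$, the induced endomorphism is $\Id_{F(A_i)}$ — yet it must equal $F(\mu_{ii})$ up to contributions from the off-diagonal $\mu_{ij}$. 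Here I need to be careful: the off-diagonal entries $\mu_{ij}$ for $A_i \not\cong A_j$ automatically land in the radical of $\End(A)$ (no isomorphisms between non-isomorphic indecomposables, and $\mathrm{rad}$ of the matrix algebra consists of matrices whose diagonal blocks are radical and whose blocks between non-isomorphic summands are arbitrary). So modulo radical, $F(\mu)$ is block-diagonal in the isotypic components, and on each isotypic block $\Id$ forces the diagonal entries to be units modulo radical. If $\mu_{ii} \in \mathrm{rad}(\End(A_i))$ then $F(\mu_{ii}) \in \mathrm{rad}(\End(F(A_i)))$, so the corresponding block of $F(\mu)$ is not invertible modulo radical, contradicting $F(\mu) = \Id$ (which is certainly invertible modulo radical on every block).

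The cleanest way to package this is probably: the condition $F(A_i) \neq 0$ for all summands means $F$ is faithful on objects in the sense needed, and combined with the Krull-Schmidt property of $F$, the functor induces a well-defined map $\End(A)/\mathrm{rad} \to \End(F(A))/\mathrm{rad}$ which is compatible with the block structure; since the target element $\overline{F(\mu)} = \overline{\Id}$ is invertible and ``reflects'' invertibility through the isotypic block decomposition, $\overline\mu$ is invertible, hence $\mu$ is invertible by the lifting-idempotents/semiperfectness of $\End(A)$. The main obstacle I anticipate is the bookkeeping around isotypic components: one must ensure that the off-diagonal entries between isomorphic indecomposable summands are handled correctly (they need \emph{not} be radical individually), so the correct statement is that $\mu$ is invertible iff its image in $\prod_i M_{m_i}(\End(A_i)/\mathrm{rad}\,\End(A_i))$ is invertible, where $m_i$ is the multiplicity of the $i$-th isomorphism class; then one checks $F$ sends this product to the analogous product for $F(A)$ compatibly, and invertibility of $\overline{F(\mu)} = \overline{\Id}$ pulls back. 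I would also need the elementary fact that in a Krull-Schmidt category an endomorphism is an isomorphism iff it is invertible modulo the radical of its (semiperfect) endomorphism ring, which I would either cite or prove in a line.
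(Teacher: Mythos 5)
Your overall strategy (reduce to invertibility modulo the radical, use that $F$ detects non-radical endomorphisms of indecomposables) is aimed in the right direction, and the ``heart of the argument'' correctly establishes that each diagonal \emph{entry} $\mu_{ii}\in\End(A_i)$ is a unit — this is essentially the paper's base case. But that is genuinely insufficient: a matrix with unit diagonal entries need not be invertible (e.g.\ $\left(\begin{smallmatrix}1&1\\1&1\end{smallmatrix}\right)$ over a field), and you recognise this yourself when you switch to demanding invertibility of the isotypic \emph{blocks} $\bar\mu_i\in M_{m_i}(D_i)$. The problem is that your argument for that stronger claim is where the real gap lives. First, ``$F$ sends this product to the analogous product for $F(A)$ compatibly'' presupposes that $F\bigl(J(\End A)\bigr)\subseteq J\bigl(\End F(A)\bigr)$, but the Krull-Schmidt hypothesis on $F$ is only imposed on indecomposables, and this containment is in general false for decomposable $A$: if $A_i\not\cong A_j$ but $F(A_i)$ and $F(A_j)$ share an indecomposable summand $B$, a morphism $A_j\to A_i$ (automatically radical in $\End A$) can become, after applying $F$, an isomorphism on the $B$-component and hence non-radical in $\End F(A)$. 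Second, even if the induced map $\bar\phi$ on the semisimple quotients were well defined, ``invertibility of $\overline{F(\mu)}=\overline{\Id}$ pulls back'' requires $\bar\phi$ to be injective; you haven't argued this, and without it the conclusion fails ($\bar\mu$ could differ from $\bar\Id$ by an element of $\ker\bar\phi$ that is not a unit).

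What is missing is precisely the paper's second case: for the isotypic block $A_i^{\oplus m_i}$, one should look at the ring map $\End(A_i^{\oplus m_i})\to\End(F(A_i^{\oplus m_i}))$, observe that its kernel is a two-sided ideal of a matrix ring over the local ring $\End(A_i)$, and that since $F(A_i)\neq 0$ this kernel is proper, hence contained in the unique maximal two-sided ideal $M_{m_i}\bigl(J(\End A_i)\bigr)$. That immediately gives $\mu_i-\Id$ radical, i.e.\ $\bar\mu_i=\bar\Id$, which is what you need — note this step uses $F(A_i)\neq 0$ but not the Krull-Schmidt property of $F$, whereas your radical-detection step uses the Krull-Schmidt property but not the full strength of $F(A_i)\neq 0$; the two hypotheses do genuinely different work and both are needed. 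The paper packages all this into a clean induction on the number of indecomposable summands (indecomposable, isotypic, then $B\oplus C$ with no shared summands and off-diagonal blocks radical), and your non-inductive attempt does not yet replace the isotypic step. Fixing your proof along these lines would make it essentially equivalent to the paper's.
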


\begin{proof}
    We induct on the number of indecomposable summands of $A$, which is finite by our Krull-Schmidt hypothesis. If $A$ is itself indecomposable, then since $F$ is a Krull-Schmidt functor, $\mu$ doesn't lie in the Jacobson radical of $\End(A)$. Since $\End(A)$ is a local ring, $\mu$ is invertible.

    Next, let us consider the case that $A$ is a direct sum of $n$ copies of a single indecomposable $A_0$. In this case, we see that $\mu-Id_A$ is in the kernel of the algebra morphism $\End(A)\rightarrow \End(F(A))$. This kernel is then contained in the unique maximal two sided ideal $M_{n\times n}(J(\End(A_0))$ of the matrix ring $M_{n\times n}(\End(A_i))\cong \End(A)$. So $\mu$ is in $\Id+J(\End(A))$, and is therefore an isomorphism.
    
    Finally, we may assume that $A$ is not indecomposable, and admits a nontrivial decomposition $A\cong B\oplus C$ where $B$ and $C$ share no isomorphic summands. Then our morphism $\mu$ decomposes as:
\begin{align*}
\mu=\begin{bmatrix}
\mu_{BB} & \mu_{CB}\\
\mu_{BC} & \mu_{CC}
\end{bmatrix}= &\begin{bmatrix}
\mu_{BB} & 0\\
0 & \mu_{CC}
\end{bmatrix}+\begin{bmatrix}
0 & \mu_{CB}\\
\mu_{BC} & 0
\end{bmatrix} \\ & \text{ \quad with $\mu_{XY}\in \Hom(X,Y)$ for $X,Y\in
  \{B,C\}$.}
  \end{align*}
By induction, this diagonal piece is an isomorphism, and since $B$ and
$C$ share no isomorphism classes of summands in common, the second
matrix is in the radical of $\End(A)$ (see the lines following the
proof of \cite[Corollary 4.4]{Krause}), so $\mu$ is an isomorphism.
\end{proof}

\begin{defi}
Let $F:\mathscr{C}\rightarrow \mathscr{D}$ be a Krull-Schmidt functor between Krull-Schmidt categories. Then an object $A$ of $\mathscr{C}$ is $F$-dense if for all summands $A_i$ of $A$ we have $F(A_i)\neq 0$.
\end{defi}
We will only use this notion with respect to $j^*$ for $j:U\rightarrow X$ a Zariski open morphism to irreducible $X$. In sheaf theoretic contexts, this agrees with the usual notion of having all indecomposable summands of dense support, and we will write this as $U$-dense. We say an object $\mathscr{E}$ of $S_X$ is $\mathbf{dense}$ in $S_X$ if it is $U$-dense for any dense Zariski open $U$ of $X$.
\begin{lem}
    Let $F:\mathscr{C}\rightarrow \mathscr{D}$ be a Krull-Schmidt functor between Krull-Schmidt categories. Then any object $A$ of $\mathscr{C}$ has a decomposition $A\cong A_F\oplus A_0$, such that $A_F$ is a maximal $F$-dense summand of $A$. This decomposition is unique up to non-unique isomorphism.
\end{lem}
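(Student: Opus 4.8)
The plan is to read the decomposition straight off a Krull--Schmidt decomposition of $A$ into indecomposables and then control everything by multiplicities; the automorphism-lifting Lemma~\ref{isolift} is not needed for this statement. First I would fix a decomposition $A\cong\bigoplus_{i=1}^{n}A_i$ into indecomposables (possible by the Krull--Schmidt hypothesis on $\mathscr{C}$), reorder so that $F(A_i)\neq 0$ precisely for $i\le m$, and set $A_F:=\bigoplus_{i\le m}A_i$ and $A_0:=\bigoplus_{i>m}A_i$. This is the candidate decomposition; it then remains to check that $A_F$ is $F$-dense, that it is maximal among $F$-dense summands, and that the decomposition is essentially unique.

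For $F$-density: given a nonzero summand $B$ of $A_F$, write $A_F\cong B\oplus B'$, refine $B$ and $B'$ into indecomposables, and invoke uniqueness of the Krull--Schmidt decomposition of $A_F$ to see that every indecomposable summand of $B$ is isomorphic to some $A_i$ with $i\le m$; such a summand is a nonzero retract of $B$ on which $F$ is nonzero. Since any functor sends retracts to retracts and the only retract of a zero object is a zero object, $F(B)\neq 0$. The crux is then a multiplicity bound, from which both maximality and uniqueness follow: writing $m_P(-)$ for the multiplicity of an indecomposable $P$ in a Krull--Schmidt decomposition, every $F$-dense summand $C$ of $A$ satisfies $m_P(C)\le m_P(A_F)$ for all $P$. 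Indeed $F$-density of $C$ forces $m_P(C)=0$ whenever $F(P)=0$, while for $F(P)\neq 0$ comparing the two Krull--Schmidt decompositions of $A$ arising from $A\cong C\oplus C'$ and $A\cong\bigoplus_i A_i$ gives $m_P(C)\le m_P(A)=m_P(A_F)$. Hence $C$ is isomorphic to a direct summand of $A_F$, which is exactly the statement that $A_F$ is a maximal $F$-dense summand.

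Uniqueness then follows formally: if $A\cong A_F'\oplus A_0'$ is a second such decomposition with $A_F'$ a maximal $F$-dense summand, the bound shows $A_F'$ is isomorphic to a summand of $A_F$, say $A_F\cong A_F'\oplus E$; since $A_F\cong\bigoplus_{i\le m}A_i$ is $F$-dense, so is $A_F'\oplus E$, and maximality of $A_F'$ forces $E\cong 0$, i.e. $A_F\cong A_F'$. Cancelling this common summand (using uniqueness of Krull--Schmidt decompositions) in $A\cong A_F\oplus A_0\cong A_F'\oplus A_0'$ gives $A_0\cong A_0'$. The isomorphism $A_F\cong A_F'$ is non-canonical, which is the meaning of ``unique up to non-unique isomorphism''. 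I do not expect a genuine obstacle here; the only points needing care are that multiplicities of indecomposables in a direct summand are bounded by those in the ambient object (immediate from uniqueness of Krull--Schmidt decompositions) and that $F$ need only be assumed a functor, so that the propagation of $F$-density from the chosen indecomposable summands to arbitrary summands goes through preservation of retracts rather than any exactness or additivity of $F$.
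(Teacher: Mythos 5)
Your proposal is correct and follows the same route as the paper: decompose $A$ into indecomposables, let $A_F$ collect those not killed by $F$, and invoke uniqueness of Krull--Schmidt decompositions for maximality and essential uniqueness. The paper states this in three lines and leaves the verification of $F$-density and the multiplicity bookkeeping implicit; you have simply spelled those out.
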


\begin{proof}
    Choose any decomposition of $A$ into indecomposable objects $A_i$, and let $A_F$ be the summand of those isomorphism types $A_i$ with $F(A_i)\neq 0$. The isomorphism class of $A_F$ is then unique by the Krull-Schmidt property.
\end{proof}

\begin{prop}\label{mutinv}
    Let $A$ and $B$ be objects in a Krull-Schmidt category $\mathscr{C}$ and $F:\mathscr{C}\rightarrow \mathscr{D}$ a Krull-Schmidt functor. If for two maps $f:A\rightarrow B$, $g:B\rightarrow A$, we have $F(f)$ and $F(g)$ are mutually inverse isomorphisms, then $f$, $g$ induce isomorphisms $f'$, $g'$ of $F$-dense summands:
\[\begin{tikzcd}
A_F\arrow[rrr, bend left, "f'"]\arrow[r,"i_A"]&
A\arrow[r,"f"]& B\arrow[r,"\pi_B"] &B_F\end{tikzcd}
\]\[\begin{tikzcd}
B_F\arrow[rrr,bend right,"g'"]\arrow[r,"i_B"]& B\arrow[r,"g"] &A\arrow[r,"\pi_A"]& A_F
\end{tikzcd}\]
     
\end{prop}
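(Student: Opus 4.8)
The plan is to reduce the statement to the automorphism-lifting lemma (Lemma \ref{isolift}) applied to the composite $g \circ f$ restricted to the $F$-dense summand $A_F$. First I would set $f' = \pi_B \circ f \circ i_A : A_F \to B_F$ and $g' = \pi_A \circ g \circ i_B : B_F \to A_F$, as in the diagrams, and consider the endomorphism $\mu := g' \circ f' : A_F \to A_F$. I want to show $\mu$ is an isomorphism; by symmetry the same argument will show $f' \circ g'$ is an isomorphism, and then a standard argument (if $vu$ and $uv$ are both isos then $u,v$ are isos) gives that $f'$ and $g'$ are themselves mutually inverse up to these isomorphisms — or more cleanly, once $\mu = g'f'$ is invertible, $f'$ is split mono and $g'$ is split epi, and running the symmetric statement makes both invertible.

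The key computation is to identify $F(\mu)$. Applying $F$ and using that $F(f), F(g)$ are mutually inverse, I would argue that $F(i_A), F(\pi_A)$ exhibit $F(A_F)$ as a summand of $F(A)$ — but in fact, by the definition of $A_F$ (the sum of the indecomposable summands $A_i$ with $F(A_i) \neq 0$) and $A_0$ (the sum of those with $F(A_i) = 0$), we have $F(A_0) = 0$, so $F(i_A) : F(A_F) \to F(A)$ and $F(\pi_A) : F(A) \to F(A_F)$ are mutually inverse isomorphisms; likewise for $B$. Therefore
\[
F(\mu) = F(\pi_A) F(g) F(i_B) F(\pi_B) F(f) F(i_A) = F(\pi_A) F(g) F(f) F(i_A) = F(\pi_A) F(i_A) = \mathrm{Id}_{F(A_F)},
\]
where the middle equality uses $F(i_B)F(\pi_B) = \mathrm{Id}_{F(B)}$ (the composite $B_F \hookrightarrow B \twoheadrightarrow B_F$ becomes the identity after $F$ since $F(B_0) = 0$) and $F(g)F(f) = \mathrm{Id}_{F(A)}$. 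Here I should be slightly careful: $i_B \pi_B$ is not the identity on $B$, only on the $B_F$-part; but after applying $F$ it acts as the identity on $F(B)$ because $F$ kills $B_0$. Spelling this out — e.g. writing $\mathrm{Id}_B = i_B\pi_B + i_{B_0}\pi_{B_0}$ and noting $F(i_{B_0}\pi_{B_0}) = 0$ — is the one place a short lemma-style remark is warranted.

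Now Lemma \ref{isolift} applies to $F : \mathscr{C} \to \mathscr{D}$, the object $A_F$, and $\mu : A_F \to A_F$: by construction every nonzero summand $A_i$ of $A_F$ has $F(A_i) \neq 0$ (that is exactly what $F$-density means), so the hypotheses are met, and $F(\mu) = \mathrm{Id}_{F(A_F)}$ forces $\mu$ to be an isomorphism. The symmetric argument gives that $\mu' := f' \circ g' : B_F \to B_F$ is an isomorphism. From $g' f' = \mu$ invertible we get $f'$ is a split monomorphism; from $f' g' = \mu'$ invertible we get $f'$ is a split epimorphism; hence $f'$ is an isomorphism, and then $g' = \mu (f')^{-1} = (f')^{-1}\mu'$ is its two-sided inverse. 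I expect the main (minor) obstacle to be purely bookkeeping: keeping straight that $i, \pi$ are the inclusion/projection of the $F$-dense summand rather than a complementary idempotent decomposition of the identity, and correctly using $F(A_0) = F(B_0) = 0$ at each step; there is no deep difficulty, since all the real content is in Lemma \ref{isolift}.
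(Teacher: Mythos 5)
Your proof is correct and follows essentially the same route as the paper: observe that $F(i_A), F(\pi_A)$ (and likewise for $B$) are mutually inverse isomorphisms because $F$ kills the non-dense summands, conclude $F(g'f') = \mathrm{Id}_{F(A_F)}$ (and symmetrically for $f'g'$), invoke Lemma \ref{isolift}, and finish with the elementary observation that $f'$ with a split-mono and split-epi structure is an isomorphism. The extra care you take in spelling out $F(i_B\pi_B) = \mathrm{Id}_{F(B)}$ via $F(B_0) = 0$ is a slight elaboration of a step the paper states more briefly, but the content is identical.
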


\begin{proof}
    First, note that $F(\pi_A)$ and $F(i_A)$ are both isomorphisms, since $A_0$ is sent to zero under $F$ by maximality of $A_F$ and similarly for $B$. Thus, our maps $f':=\pi_B\circ f\circ i_A$ and $g':=\pi_A\circ g\circ i_B$, induce mutually inverse isomorphisms  $F(f')$ and $F(g')$ under $F$.
     So Lemma \ref{isolift} applied to the compositions of these gives that $f'\circ g'$ and $g'\circ f'$ are both isomorphisms. By elementary category theory, this then yields that $f'$ and $g'$ are both isomorphisms, as was to be shown.
\end{proof}

We can now state our main theorem.

\begin{thm}\label{mainthm}
Let $X,X'$ be smooth, irreducible varieties with proper, surjective maps $f:X \rightarrow Y$, $g:X'\rightarrow Y$, and $j:U\rightarrow Y$ a Zariski open in $Y$. Assume that the pullbacks $f_U,g_U:X_U,X'_U\rightarrow U$ are isomorphic over $U$:
\[\begin{tikzcd}[column sep=small]
    X_U\arrow[ddr,"f|_U"']\arrow[drrr]\arrow[rr,dashed,"\simeq"]&&X'_U\arrow[drrr]\arrow[ddl,"g|_U"]\\
    &&&X\arrow[ddr,"f"']&&X'\arrow[ddl,"g"]\\
    &U\arrow[drrr,"j"',hook]&\\
    &&&&Y
\end{tikzcd}\]
Then for any smoothly orientable, Krull-Schmidt base change formalism $S$ the $U$-dense summands of $f_*\mathbf{1}_X$ and $g_*\mathbf{1}_{X'}$ in $S_Y$ are isomorphic.
\end{thm}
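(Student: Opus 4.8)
The plan is to follow the strategy of \S\ref{Mainarg}: build an explicit comparison morphism from a fundamental class on the diagonal component of $X\times_Y X'$, show that it becomes an isomorphism after restriction to $U$, and then promote this to an isomorphism of $U$-dense summands using the Krull-Schmidt machinery of \S\ref{sec-KS}.

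First I would form the fibre-product square
\[\begin{tikzcd}
X\times_Y X'\arrow[d,"\tilde g"]\arrow[r,"\tilde f"]&X'\arrow[d,"g"]\\
X\arrow[r,"f"]&Y
\end{tikzcd}\]
Since $f$ and $g$ are proper, so are $\tilde f$ and $\tilde g$ by (BC2), and $f_!=f_*$, $g_!=g_*$. Writing $X_U=f^{-1}(U)$ and $X'_U=g^{-1}(U)$, these are nonempty (hence dense) opens of the irreducible varieties $X$ and $X'$ that are isomorphic over $U$, so $d:=\dim X=\dim X'$. As $S$ is smoothly orientable, $X$ and $X'$ are $S$-smooth; fixing orientations $\mathbf 1_{X}\cong\omega_{X}[-2d]$ and $\mathbf 1_{X'}\cong\omega_{X'}[-2d]$ and using $\tilde g^*\mathbf 1_X=\mathbf 1_{X\times_Y X'}$ and $\tilde f^!\omega_{X'}=\omega_{X\times_Y X'}$, the convolution isomorphism of Definition \ref{conviso} yields identifications
\[\Hom(f_*\mathbf 1_X,g_*\mathbf 1_{X'})\ \cong\ S^!_{2d}(X\times_Y X'),\qquad \Hom(g_*\mathbf 1_{X'},f_*\mathbf 1_X)\ \cong\ S^!_{2d}(X'\times_Y X).\]
Let $\phi:X_U\xrightarrow{\ \sim\ }X'_U$ be the given isomorphism over $U$, let $Z\subseteq X\times_Y X'$ be the closure of its graph (an irreducible variety of dimension $d$ with $Z\cap(X_U\times_U X'_U)$ equal to that graph), and let $Z'\subseteq X'\times_Y X$ be the closure of the graph of $\phi^{-1}$; the swap isomorphism $X\times_Y X'\cong X'\times_Y X$ carries $Z$ onto $Z'$. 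Using resolution of singularities (or a de Jong alteration of invertible degree), $Z$ is $S$-orientable; I would fix an orientation of $Z$, transport it to $Z'$ along the swap, push the corresponding fundamental classes forward along the closed immersions into $X\times_Y X'$ and $X'\times_Y X$, and feed these classes through the identifications above to obtain comparison morphisms $\alpha:f_*\mathbf 1_X\to g_*\mathbf 1_{X'}$ and $\beta:g_*\mathbf 1_{X'}\to f_*\mathbf 1_X$.

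Next I would show that $j^*\alpha$ and $j^*\beta$ are mutually inverse isomorphisms. Since $j$ is an open immersion, hence étale in the sense of Definition \ref{basechangeformalism}, Proposition \ref{convisolocally} says that restriction to $U$ commutes with the convolution isomorphism up to base change; thus $j^*\alpha$ corresponds to the restriction of the fundamental class of $Z$ to $S^!_{2d}(X_U\times_U X'_U)$, and, since orientations restrict to orientations and $Z\cap(X_U\times_U X'_U)$ is the graph of $\phi$, this is the fundamental class of that graph. It then remains to check that, for an isomorphism $\phi$ over $U$ with smooth source and target, the fundamental class of the graph of $\phi$ corresponds under the convolution isomorphism to the isomorphism $f_{U*}\mathbf 1_{X_U}\to g_{U*}\mathbf 1_{X'_U}$ induced by $\phi$; equivalently (taking $\phi=\mathrm{id}$), that the fundamental class of the diagonal of a smooth $U$-variety is sent to the identity. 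Granting this, $j^*\alpha$ and $j^*\beta$ are the isomorphisms induced by $\phi$ and $\phi^{-1}$, hence mutually inverse because the orientations of $Z$ and $Z'$ were matched under the swap.

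Finally, since $S$ is Krull-Schmidt, $j^*:S_Y\to S_U$ is a Krull-Schmidt functor between Krull-Schmidt categories, so Proposition \ref{mutinv} applies to $\alpha$, $\beta$ with $F=j^*$ and produces mutually inverse isomorphisms between the maximal $j^*$-dense summands of $f_*\mathbf 1_X$ and $g_*\mathbf 1_{X'}$, i.e.\ between their $U$-dense summands, which is the assertion. I expect the main obstacle to be the compatibility used above: identifying the fundamental class of the diagonal (equivalently, of the graph of an isomorphism) with the identity (equivalently, the induced isomorphism) under the convolution isomorphism requires a careful unwinding of the zig-zag of units and counits defining $\tau$ in Definition \ref{conviso} against the construction of the fundamental class, and one must also ensure that the orientations chosen for $Z$ and $Z'$ match up closely enough that $j^*\alpha$ and $j^*\beta$ are honest mutual inverses and not merely individually invertible. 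Everything else is formal manipulation in the base change formalism together with the Krull-Schmidt results of \S\ref{sec-KS}.
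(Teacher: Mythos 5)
Your proposal follows essentially the same route as the paper's proof: build the comparison morphism from the fundamental class of the closure $\overline{\Delta_\alpha}$ of the graph of the isomorphism over $U$, transport it via the convolution isomorphism of Definition \ref{conviso}, use Proposition \ref{convisolocally} (and the compatibility of orientations, which the paper packages as Proposition \ref{convisocomp}) to see that it restricts to $\alpha_*\mathbf{1}$ over $U$, and then apply Proposition \ref{mutinv} to promote the resulting mutually inverse isomorphisms over $U$ to an isomorphism of $U$-dense summands. You have also correctly identified the one genuinely delicate step—matching the fundamental class of the graph with the induced isomorphism under the convolution isomorphism, and choosing orientations so that the two comparison morphisms are honest mutual inverses—which the paper defers to the appendix as Proposition \ref{convisocomp}.
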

\begin{proof}
Let the isomorphism over $U$ be $\alpha:X_U\rightarrow X'_U$. Then we choose orientations of the spaces involved such that we have the following commutative diagram:
\[
\begin{tikzpicture}[xscale=2,yscale=1]
        \node (H0) at (0,0) {$\Hom({f_U}_*\mathbf{1}_{X_U},{g_U}_*\mathbf{1}_{X'_U})$};
        \node (H1) at (0,2) {$\Hom(f_*\mathbf{1}_X,g_*\mathbf{1}_{X'})$};
        \node (S0) at (2,0) {$S_{*,2d}(X_U\times_U X'_U)$};
        \node (S1) at (2,2) {$S_{*,2d}(X\times_Y X')$};
        \draw[->] (H0) to node[above] {$\sim$} (S0);
        \draw[->] (H1) to node[above] {$\sim$} (S1);
        \draw[->] (H1) to (H0);
        \draw[->] (S1) to (S0);
        \node (ur) at (3,2) {$[\overline{\Delta_\alpha}]$};
        \node (e1) at (2.72,2) {$\ni$};
        \node (lr) at (3,-1) {$[\Delta_\alpha]$};
        \node (e1) at (2.72,-.5) {\rotatebox{135}{$\in$}};
        \node (l) at (0,-1) {$\alpha_*\mathbf{1}$};
        \node (e1) at (0,-.6) {\rotatebox{90}{$\in$}};
        \draw[|->] (ur) to (lr);
        \draw[|->] (l) to (lr);
      \end{tikzpicture}
    \]
That we can do this is Proposition \ref{convisocomp} in the appendix. 
Transporting the fundamental class of $\overline{\Delta_\alpha}$ across the convolution isomorphism then yields a map \[f_*\mathbf{1}_X\rightarrow g_*\mathbf{1}_{X'}.\] By commutativity, this restricts to $\alpha_*\mathbf{1}$ over $U$. By symmetry there also exists a morphism back, which gives the two morphisms restricting to mutually inverse isomorphisms over $U$. We may then use Lemma \ref{mutinv} to conclude that these induce isomorphisms on the $U$-dense summands.  \end{proof}

\begin{cor}
    In the setting of Theorem \ref{mainthm}, the dense summands of $f_*\mathbf{1}_X$ and $g_*\mathbf{1}_{X'}$ are isomorphic.
\end{cor}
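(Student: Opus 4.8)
The plan is to deduce this corollary from Theorem \ref{mainthm} by showing that the ``dense'' summand (in the sense of being $U$-dense for \emph{every} dense Zariski open $U$) is in fact detected by $j^*$ for a single, sufficiently small $U$ over which the two maps agree. First I would observe that since $X$ and $X'$ are both smooth and proper over $Y$ and the corollary is a statement about $f_*\mathbf{1}_X$ and $g_*\mathbf{1}_{X'}$, we are free to shrink $Y$: choose a dense Zariski open $U \subseteq Y$ small enough that $f_U$ and $g_U$ become isomorphic over $U$ (for instance, by generic smoothness / flatness one can arrange the two maps to have isomorphic restrictions over a common dense open, which is precisely the hypothesis of Theorem \ref{mainthm}). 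With such a $U$ fixed, Theorem \ref{mainthm} gives that the $U$-dense summands of $f_*\mathbf{1}_X$ and $g_*\mathbf{1}_{X'}$ are isomorphic.

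The second step is to identify the $U$-dense summand with the genuinely dense summand. Here I would use condition \eqref{C3}: for irreducible smooth $X$, the object $\mathbf{1}_X = t^*\mathbf{1}$ is indecomposable, hence $f_*\mathbf{1}_X$ restricted over the smooth locus (where $f$ is an isomorphism onto a dense open, after shrinking) is indecomposable with dense support; so $f_*\mathbf{1}_X$ has a \emph{unique} indecomposable summand whose support is dense in $Y$, and likewise for $g_*\mathbf{1}_{X'}$. By the definition preceding Theorem \ref{mainthm}, an object is \textbf{dense} in $S_Y$ precisely when it is $U'$-dense for every dense Zariski open $U'$; and since a summand with dense support is nonzero after restriction to \emph{any} dense open, the dense summand of $f_*\mathbf{1}_X$ is exactly the $U$-dense summand for our chosen $U$ (the maximal $j^*$-dense summand from the lemma preceding Proposition \ref{mutinv}). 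Thus the $U$-dense summand and the dense summand coincide.

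Combining these, the isomorphism of $U$-dense summands produced by Theorem \ref{mainthm} is an isomorphism of dense summands, which is the claim. The main obstacle I anticipate is purely bookkeeping: making precise that "shrinking $U$" does not change the dense summand — i.e. that the maximal $j_U^*$-dense summand is independent of which dense open $U$ one picks, and in particular agrees for the $U$ appearing in Theorem \ref{mainthm} and for the smooth locus. This follows from Krull-Schmidt uniqueness (the dense summand is the sum of those indecomposable constituents with dense support, and "dense support" is intrinsic, not dependent on the test open), but one should state it cleanly, perhaps as the remark that $\mathscr{E}$ dense in $S_Y$ means all its indecomposable summands have dense support, so $f_*\mathbf{1}_X = (f_*\mathbf{1}_X)_{\mathrm{dense}} \oplus (\text{summands supported on a proper closed subset})$, and the first factor is $j_U^*$-dense for every dense open including the chosen one. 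No genuinely new input beyond Theorem \ref{mainthm} and the Krull-Schmidt formalism is needed.
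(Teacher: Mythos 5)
There is a genuine gap. The pivotal assertion in your second step — that the $U$-dense summand \emph{equals} the dense summand — is false in general. An indecomposable summand of $f_*\mathbf{1}_X$ supported on a proper closed subset $Z \subsetneq Y$ with $Z \cap U \neq \emptyset$ is $U$-dense (its restriction to $U$ is nonzero) without being dense; there is no reason for such summands not to occur. The reasoning you give to rule them out assumes $f$ restricts to an isomorphism over a dense open and that $f_*\mathbf{1}_X$ has a \emph{unique} indecomposable summand with dense support. Neither is available here: in Theorem \ref{mainthm}, $f$ and $g$ are only proper surjective with smooth source, not birational, so $f_*\mathbf{1}_X|_U$ can be decomposable (e.g.\ $f$ smooth proper gives many shifts of local systems as dense summands), and the $U$ of the theorem is whatever open over which $f_U \cong g_U$, not the smooth locus of $Y$. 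Relatedly, the framing suggesting one may ``shrink $Y$'' is incorrect — shrinking $Y$ changes the objects $f_*\mathbf{1}_X$ and $g_*\mathbf{1}_{X'}$; only the auxiliary open $U$ may be shrunk, and moreover the hypotheses of Theorem \ref{mainthm} (including $U$) are already given in the corollary, so there is nothing to arrange by generic flatness.

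The repair is the paper's one-line observation, which is also what your third paragraph is circling around: dense implies $U$-dense, so the dense summand of $f_*\mathbf{1}_X$ is a direct summand \emph{of} the $U$-dense summand, and it is precisely the dense summand of that $U$-dense summand. Since the $U$-dense summands of $f_*\mathbf{1}_X$ and $g_*\mathbf{1}_{X'}$ are isomorphic by Theorem \ref{mainthm}, Krull--Schmidt uniqueness transports this to an isomorphism of their dense summands. (One could alternatively shrink $U$ to $U' := U \setminus \bigcup Z_i$, where the $Z_i$ are the proper closed supports of the finitely many non-dense indecomposable summands of both pushforwards, so that $U'$-dense and dense coincide; but this extra step is unnecessary.)
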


\begin{proof}
    The dense summands of these are the dense summands of the $U$-dense summands of $f_*\mathbf{1}_X$ and $g_*\mathbf{1}_{X'}$, hence are isomorphic.
\end{proof}
\begin{remark}
    One may note that the use of a \emph{Zariski} neighbourhood was essential in this proof, to be able to take a closure of the graph of the isomorphism over $U$. If one uses a simple étale neighbourhood instead, one must push forward this cycle, and the induced map is an isomorphism only if the degree of the étale morphism is invertible in the ring of coefficients.
  \end{remark}
\section{Applications}
In this final section we will see some applications of Theorem \ref{mainthm}. This theorem allows one to construct canonical objects in $S_X$ for any smoothly orientable base change formalism that play the role of intersection cohomology sheaves in the $\mathbb{Q}$ constructible setting, and parity sheaves in the $\mathbb{F}_p$ constructible setting.

Before considering the general case, let us consider the smoothly orientable base change formalism of constructible sheaves with coefficients in a field or complete local ring $\Lambda$. As an immediate corollary of Theorem \ref{mainthm}, we obtain the following:

  \begin{thm} \label{thm:main1} Let $Y$ be an
    irreducible variety.
    There exists a complex $\mathscr{E}(Y,\Lambda) \in D^b_c(Y,\Lambda)$ characterised up to isomorphism by the following:
    \begin{enumerate}
    \item $\mathscr{E}(Y,\Lambda)$ is indecomposable and its support
      is dense;
    \item $\mathscr{E}(Y,\Lambda)$ is
    a summand inside $f_* \Lambda_X$, for any resolution $f: X \to Y$.
    \end{enumerate}
  \end{thm}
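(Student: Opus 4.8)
The plan is to derive this directly from Theorem \ref{mainthm}, together with the facts that $D^b_c(-,\Lambda)$ is a smoothly orientable Krull-Schmidt base change formalism satisfying axiom (C3). Fix a resolution $f : X \to Y$; since $Y$ is irreducible and $f$ is proper and birational, $X$ may be taken smooth and irreducible, so $\Lambda_X = \mathbf{1}_X$ and $f$ is one of the maps to which Theorem \ref{mainthm} applies. Decompose $f_*\Lambda_X \cong \bigoplus_i \mathscr{E}_i$ into indecomposables (Krull-Schmidt). I claim exactly one summand has dense support: choosing a dense open $j : U \hookrightarrow Y$ over which $f$ is an isomorphism, base change gives $j^* f_*\Lambda_X \cong \Lambda_U$, which is indecomposable by (C3); since $\bigoplus_i j^*\mathscr{E}_i \cong \Lambda_U$, exactly one $j^*\mathscr{E}_{i_0}$ is nonzero and is isomorphic to $\Lambda_U$. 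As $\mathrm{supp}(j^*\mathscr{F}) = \mathrm{supp}(\mathscr{F}) \cap U$ and $U$ is dense open in the irreducible $Y$, this says $\mathscr{E}_{i_0}$ is the unique indecomposable summand of $f_*\Lambda_X$ whose support is all of $Y$. Set $\mathscr{E}(Y,\Lambda) := \mathscr{E}_{i_0}$; it is indecomposable with dense support, so it satisfies property (1), and it is a summand of this $f_*\Lambda_X$.

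To obtain property (2) in general — equivalently, independence of the resolution — take any second resolution $g : X' \to Y$ and choose a dense open $U$ over which both $f$ and $g$ restrict to isomorphisms (the intersection of the two birationality loci is again a dense open of $Y$). Then $f|_U$ and $g|_U$ are isomorphic over $U$, so Theorem \ref{mainthm} and the corollary recorded after it give that the ($U$-)dense summands of $f_*\Lambda_X$ and $g_*\Lambda_{X'}$ are isomorphic. Since the $U$-dense summand of $f_*\Lambda_X$ is precisely $\mathscr{E}_{i_0}$ — each indecomposable summand of dense support restricts nontrivially over $U$, and conversely — the object built from $g$ is isomorphic to the one built from $f$; in particular $\mathscr{E}(Y,\Lambda)$ occurs as a summand of $f_*\Lambda_X$ for every resolution. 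For the uniqueness clause, suppose $\mathscr{E}'$ satisfies (1) and (2). By (2) it is a summand of $f_*\Lambda_X$ for our fixed $f$; being indecomposable it is isomorphic to some $\mathscr{E}_i$ by Krull-Schmidt; and having dense support it must be $\mathscr{E}_{i_0}$, i.e. $\mathscr{E}' \cong \mathscr{E}(Y,\Lambda)$.

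The real content is Theorem \ref{mainthm}; here the only steps needing care are the identification of ``maximal $U$-dense summand'' with ``unique indecomposable summand of dense support'' — which uses the indecomposability of $\Lambda_U$ from (C3) — and the observation that a single dense open can be chosen to witness birationality of two given resolutions simultaneously. I do not expect either to present a genuine obstacle.
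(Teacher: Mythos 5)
Your proof is correct and follows the same route the paper intends: the paper presents Theorem~\ref{thm:main1} as an ``immediate corollary'' of Theorem~\ref{mainthm} without supplying further detail, and your argument is exactly that deduction, making explicit the identification of the maximal $U$-dense summand with the unique indecomposable summand of full support via (C3) and the choice of a common birationality locus for two resolutions.
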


We call $\mathscr{E}(Y,\Lambda)$ the \textbf{geometric extension} on
  $Y$.

As we explained in \S\ref{sec:motivmodrep}, in the special case of a cellular resolution of singularities, this geometric extension will be a parity sheaf \cite{JMW}. We may think of this object as a ``geometrically motivated" minimal way to extend the constant sheaf on the smooth locus of $Y$. In particular, since this summand occurs for any resolution of singularities, we obtain the following corollary for $\mathbb{F}_p$ coefficients.

\begin{cor}\label{ineq}
    For any resolution of singularities $\pi:X\rightarrow Y$, for all $y\in Y$ with fibre $X_y=\pi^{-1}(y)$, we have the inequality \[\dim H^i(\mathscr{E}_{\mathbb{F}_p}(Y)_y)\leq \dim H^i(X_y,\mathbb{F}_p).\]
\end{cor}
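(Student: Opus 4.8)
The plan is to derive Corollary \ref{ineq} as a direct consequence of Theorem \ref{thm:main1} together with the proper base change theorem. By Theorem \ref{thm:main1}(2), for any resolution of singularities $\pi : X \to Y$ the geometric extension $\mathscr{E}_{\mathbb{F}_p}(Y)$ occurs as a direct summand of $\pi_* \mathbb{F}_{p,X}$ in $D^b_c(Y,\mathbb{F}_p)$. Write $\pi_* \mathbb{F}_{p,X} \cong \mathscr{E}_{\mathbb{F}_p}(Y) \oplus \mathscr{Q}$ for some complex $\mathscr{Q}$. The first step is simply to take stalks at a point $y \in Y$: stalks are computed by applying $i_y^*$, which is an additive (indeed exact, triangulated) functor, so $(\pi_* \mathbb{F}_{p,X})_y \cong \mathscr{E}_{\mathbb{F}_p}(Y)_y \oplus \mathscr{Q}_y$ in $D^b_c(\mathrm{pt},\mathbb{F}_p)$.

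Next I would apply the proper base change theorem. Since $\pi$ is proper, base change along the inclusion $i_y : \{y\} \hookrightarrow Y$ gives $i_y^* \pi_* \mathbb{F}_{p,X} \cong (\pi_y)_* \mathbb{F}_{p,X_y}$ where $X_y = \pi^{-1}(y)$ and $\pi_y : X_y \to \{y\}$ is the structure map; in other words, the stalk of $\pi_* \mathbb{F}_{p,X}$ at $y$ computes $R\Gamma(X_y, \mathbb{F}_p)$, whose cohomology in degree $i$ is $H^i(X_y, \mathbb{F}_p)$. Taking $i$-th cohomology of the direct sum decomposition above and using that cohomology of a direct sum is the direct sum of cohomologies, we obtain
\[
H^i(X_y, \mathbb{F}_p) \cong H^i\big(\mathscr{E}_{\mathbb{F}_p}(Y)_y\big) \oplus H^i(\mathscr{Q}_y).
\]
Since these are finite-dimensional $\mathbb{F}_p$-vector spaces (finiteness being part of the constructibility hypothesis), passing to dimensions yields $\dim H^i(\mathscr{E}_{\mathbb{F}_p}(Y)_y) \le \dim H^i(X_y, \mathbb{F}_p)$, which is exactly the claimed inequality.

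There is essentially no hard step here — the corollary is formal once Theorem \ref{thm:main1} is in hand; the only points requiring a word of care are that one must invoke proper base change (which holds for resolutions, as they are proper) to identify the stalk with the fibre cohomology, and that one should note the relevant cohomology groups are finite-dimensional so that ``$\dim$'' makes sense and is monotone under passage to direct summands. One could equally phrase the argument by noting that $i_y^*$ applied to the split inclusion $\mathscr{E}_{\mathbb{F}_p}(Y) \hookrightarrow \pi_*\mathbb{F}_{p,X}$ remains split injective, hence injective on each cohomology group $H^i$ of the stalks, which is the same conclusion.
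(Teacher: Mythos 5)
Your proof is correct and follows essentially the same route as the paper: invoke Theorem \ref{thm:main1} to realise $\mathscr{E}_{\mathbb{F}_p}(Y)$ as a direct summand of $\pi_*\mathbb{F}_{p,X}$, apply $i_y^*$, identify the stalk cohomology with $H^*(X_y,\mathbb{F}_p)$ via proper base change, and compare dimensions. Your write-up just spells out the intermediate steps (splitting off the complement $\mathscr{Q}$, additivity of cohomology, finite-dimensionality) that the paper leaves implicit.
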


\begin{proof}
By definition, we know that $\mathscr{E}_{\mathbb{F}_p}(Y)_y$ is a summand of $i_{y}^*\pi_*\mathbf{1}_X$. The cohomology of $i_{y}^*\pi_*\mathbf{1}_X$ then computes the cohomology of the fibre by proper base change, giving the result.
\end{proof}

We now consider the case of a general smoothly orientable, Krull-Schmidt base change formalism, and higher dimensional local systems. First, we need the definition of a higher dimensional local system in this context.
\begin{defi}
A \textbf{geometric local system} $\mathscr{L}$ on a smooth irreducible variety $U$ is a smooth, proper, surjective map $V\xrightarrow{\mathscr{L}}U$. The restriction of $\mathscr{L}$ to an open $U'\rightarrow U$ is the base change of this morphism.
\end{defi}

The following proposition lets us interpret compactification of morphisms as a method to ``extend'' geometric local systems.

\begin{prop}\label{compactification}
For any geometric local system $V\xrightarrow{\mathscr{L}} U$ over $U$ a (smooth Zariski) open in $Y$, there exists a proper morphism $X\xrightarrow{\tilde{\mathscr{L}}} Y$ from smooth $X$ such that we have a pullback square:
\[\begin{tikzcd}
    V\arrow[r]\arrow[d,"\mathscr{L}"]& X\arrow[d,"\overline{\mathscr{L}}"]\\
    U\arrow[r,"j"]&Y
\end{tikzcd}\]
\end{prop}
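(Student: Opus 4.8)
The plan is to first compactify the morphism $\mathscr{L}:V\to U$ relative to $Y$, and then resolve the singularities of the resulting total space without disturbing the part that lies over $U$.

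First I would regard $V$ as a $Y$-variety via the composite $V\xrightarrow{\mathscr{L}}U\xrightarrow{j}Y$. By Nagata compactification (or, assuming quasi-projectivity, simply by taking the closure of $V$ inside a projective compactification of an ambient space) this morphism factors as an open immersion $V\hookrightarrow \bar{X}$ followed by a proper morphism $p:\bar{X}\to Y$. Put $\bar{X}_U:=p^{-1}(U)$, which is proper over $U$ and contains $V$ as an open subscheme. This is the one place the hypotheses on a geometric local system enter: since $\mathscr{L}$ is proper and $\bar{X}_U\to U$ is separated, the open immersion $V\hookrightarrow \bar{X}_U$ is itself proper, hence a clopen immersion; in particular $V$ is closed in $\bar{X}_U$. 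Consequently, if $X_1$ denotes the closure of $V$ in $\bar{X}$, then $X_1\cap \bar{X}_U=\overline{V}^{\,\bar{X}_U}=V$, i.e.\ $X_1\times_Y U=V$. As $X_1$ is closed in $\bar{X}$, the restriction $X_1\to Y$ is proper, $V$ is a dense open of $X_1$, and over $U$ this morphism restricts to $\mathscr{L}$.

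It remains to replace $X_1$ by a smooth variety. Since $U$ is smooth and $\mathscr{L}$ is a smooth morphism, $V$ is smooth, hence contained in the regular locus of $X_1$. Choosing a resolution of singularities $\pi:X\to X_1$ that is an isomorphism over the regular locus, we get $\pi^{-1}(V)\xrightarrow{\ \sim\ }V$. Setting $\tilde{\mathscr{L}}$ to be the composite $X\xrightarrow{\pi}X_1\to Y$, we have that $X$ is smooth, $\tilde{\mathscr{L}}$ is proper as a composite of proper morphisms, and
\[ X\times_Y U \;=\; X\times_{X_1}\bigl(X_1\times_Y U\bigr) \;=\; X\times_{X_1}V \;=\; \pi^{-1}(V)\;\cong\; V, \]
compatibly with the maps down to $U$; this is exactly the claimed pullback square.

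The argument is essentially routine; the only real point to watch is that $X\times_Y U$ must come out to be \emph{exactly} $V$, and not something larger. Both potential failures — extraneous components over $U$ in a careless compactification, and the resolution modifying $V$ — are ruled out by the conditions built into the notion of a geometric local system (properness of $\mathscr{L}$ for the first, smoothness of $\mathscr{L}$ for the second). As elsewhere in the paper, this step genuinely uses resolution of singularities: replacing it by a de Jong alteration would turn $V$ into a generically finite cover and destroy the pullback square.
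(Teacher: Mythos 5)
Your proof is correct and follows essentially the same route as the paper's (Nagata-compactify $V\to Y$, then resolve singularities, using smoothness of $V$ to ensure the resolution is an isomorphism over $V$). The one place you go beyond the paper's rather terse sketch is in actually verifying that the square is Cartesian: you isolate precisely where properness of $\mathscr{L}$ enters (to make the open immersion $V\hookrightarrow \bar{X}_U$ a clopen immersion, so that taking the closure of $V$ in the compactification yields exactly $V$ over $U$), whereas the paper asserts only that $V\to Y$ factors through $X$ and leaves the reader to check that nothing extra appears in the fibre over $U$. That added care is worthwhile and does not change the underlying strategy.
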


\begin{proof}
    The proof may be summarised in the following diagram:\[\begin{tikzcd}
    &X\arrow[d]\arrow[dd,bend left,"\overline{\mathscr{L}}"]\\ V\arrow[d,"\mathscr{L}"]\arrow[ur,dashed]\arrow[r]&\tilde{Y}\arrow[d,"g"']\\
    U\arrow[r,"j"]&Y
    \end{tikzcd}\]    
    First, we compactify the composition $j\circ \mathscr{L}$ as
    $V\rightarrow \tilde{Y}\xrightarrow{g} Y$, where $g$ is
    proper. Choosing a resolution of singularities $X\rightarrow
    \tilde{Y}$, as $V$ is smooth, the map $V\rightarrow Y$ factors through $X$. Then composing with $g$ gives the desired map
    $X\xrightarrow{\tilde{\mathscr{L}}} Y$. \end{proof}

From here we will let $S$ denote a Krull-Schmidt, smoothly orientable base change formalism, satisfying the following condition for geometric local systems:

\begin{itemize}
\item[(D)] If $\mathscr{L}:V\to U$ is a geometric local system, then all summands of $\mathscr{L}_*\mathbf{1}$ have dense support.
\end{itemize}
\begin{remark}
This condition holds in all examples we have discussed so far, and we don't know of any situation where it fails to hold. In sheaf theoretic or algebro-topological situations, this follows from homotopy invariance.
\end{remark}
  
With these preliminaries, we have the following general version of Theorem \ref{thm:main1}.

\begin{thm} 
 \label{thm:main2} Let $Y$ be an irreducible variety, $S$ a smoothly orientable, Krull-Schmidt base change formalism satisfying condition (D). For any dense $U \subset Y$ and geometric local system $V
    \xrightarrow{\mathscr{L}}U$ there is a unique object
    $\mathscr{E}(Y,\mathscr{L}) \in S_Y$  satisfying:
    \begin{enumerate}
    \item $j^*\mathscr{E}_S(Y, \mathscr{L}) \cong
      \mathscr{L}_*\mathbf{1}_V$ where $j : U \hookrightarrow Y$ denotes
      the inclusion;
      \item      $\mathscr{E}_S(Y, \mathscr{L})$ is dense, with no summands supported on a proper closed subset of $Y$;
    \item for any proper map with smooth source $f : X \to Y$ which restricts to
      $\mathscr{L}$ over $U$, $\mathscr{E}_S(Y,\mathscr{L})$ occurs as a
      summand of $f_* \mathbf{1}_X$. 
    \end{enumerate}
\end{thm}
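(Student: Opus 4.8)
The plan is to deduce Theorem~\ref{thm:main2} from Theorem~\ref{mainthm} in essentially the same way that Theorem~\ref{thm:main1} was deduced, with the extra input of Proposition~\ref{compactification} handling the existence of a suitable proper map. First I would establish existence: given the geometric local system $\mathscr{L}:V\to U$, apply Proposition~\ref{compactification} to obtain a proper map $\overline{\mathscr{L}}:X\to Y$ from smooth $X$ fitting into a pullback square with $\mathscr{L}$ over $U$. Define $\mathscr{E}_S(Y,\mathscr{L})$ to be the $U$-dense summand of $\overline{\mathscr{L}}_*\mathbf{1}_X$ (which exists and is well-defined up to isomorphism by the lemma on $F$-dense summands applied to $F=j^*$). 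Property (2) is then immediate from the definition of $U$-dense summand, and condition (D) guarantees that the complementary summand $\overline{\mathscr{L}}_*\mathbf{1}_X$ has no further summands that could secretly be dense — more precisely, (D) applied to $\mathscr{L}$ forces $j^*\mathscr{E}_S(Y,\mathscr{L})\cong \mathscr{L}_*\mathbf{1}_V$, since $\mathscr{L}_*\mathbf{1}_V=\overline{\mathscr{L}}_*\mathbf{1}_X|_U$ has all summands dense, so the non-$U$-dense part of $\overline{\mathscr{L}}_*\mathbf{1}_X$ must vanish over $U$; this gives property (1).

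Next I would prove property (3) together with uniqueness. Let $f:X'\to Y$ be any proper map with smooth source restricting to $\mathscr{L}$ over $U$. Then $f$ and $\overline{\mathscr{L}}$ both restrict to $\mathscr{L}:V\to U$ over $U$, so in particular $f_U$ and $(\overline{\mathscr{L}})_U$ are isomorphic over $U$ (both being $\mathscr{L}$). By Theorem~\ref{mainthm}, the $U$-dense summands of $f_*\mathbf{1}_{X'}$ and $\overline{\mathscr{L}}_*\mathbf{1}_X$ are isomorphic; hence $\mathscr{E}_S(Y,\mathscr{L})$, being the $U$-dense summand of the latter, occurs as a summand of $f_*\mathbf{1}_{X'}$. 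This is property (3). For uniqueness, suppose $\mathscr{E}'$ also satisfies (1)--(3). By (3) applied to the resolution-type map $\overline{\mathscr{L}}$, $\mathscr{E}'$ is a summand of $\overline{\mathscr{L}}_*\mathbf{1}_X$; by (2), $\mathscr{E}'$ is $U$-dense, so $\mathscr{E}'$ is a summand of the $U$-dense summand $\mathscr{E}_S(Y,\mathscr{L})$. Conversely $\mathscr{E}_S(Y,\mathscr{L})$ is a summand of $f_*\mathbf{1}_{X'}$ for any $f$ realizing $\mathscr{E}'$ via (3), and applying the $U$-dense decomposition together with (2) for $\mathscr{E}'$ shows $\mathscr{E}_S(Y,\mathscr{L})$ is a summand of $\mathscr{E}'$. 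By Krull--Schmidt, $\mathscr{E}'\cong\mathscr{E}_S(Y,\mathscr{L})$.

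The step I expect to be the main obstacle is verifying that property (1), the precise identification $j^*\mathscr{E}_S(Y,\mathscr{L})\cong\mathscr{L}_*\mathbf{1}_V$, interacts correctly with the passage to $U$-dense summands — one needs that $j^*$ commutes with taking the $U$-dense part, i.e. that no indecomposable summand of $\overline{\mathscr{L}}_*\mathbf{1}_X$ with non-dense support contributes anything over $U$ while some $U$-dense summand restricts to zero over $U$. The first is automatic (non-dense support means zero restriction to a dense open); the subtlety is that a priori a $U$-dense summand could still fail to restrict to all of $\mathscr{L}_*\mathbf{1}_V$ if $\mathscr{L}_*\mathbf{1}_V$ itself had a non-dense summand over $U$ — but $U$ is smooth and irreducible, so condition (D) is exactly what rules this out, forcing $\mathscr{L}_*\mathbf{1}_V$ to be $U$-dense as an object of $S_U$, whence $j^*$ kills precisely $A_0$ and preserves $A_F$. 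Assembling these observations carefully, while routine, is where the argument must be made airtight; everything else is a direct appeal to Theorem~\ref{mainthm}, Proposition~\ref{compactification}, and the Krull--Schmidt machinery of Section~\ref{sec-KS}.
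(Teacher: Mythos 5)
Your overall strategy — compactify $\mathscr{L}$ via Proposition \ref{compactification}, define $\mathscr{E}_S(Y,\mathscr{L})$ as the $U$-dense summand of $\overline{\mathscr{L}}_*\mathbf{1}_X$, and invoke Theorem \ref{mainthm} for independence of the compactification — is exactly what the paper intends (the paper offers no explicit proof, treating Theorem \ref{thm:main2} as an immediate consequence of Theorem \ref{mainthm} in parallel with Theorem \ref{thm:main1}). But two steps are left with real gaps.

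First, you state that property (2) is ``immediate from the definition of $U$-dense summand.'' The statement, however, requires $\mathscr{E}_S(Y,\mathscr{L})$ to be \emph{dense}, i.e.\ $U'$-dense for \emph{every} dense Zariski open $U'$, which is a priori stronger than $U$-dense. The upgrade does hold, but it goes through condition (D) and deserves to be said: if $A$ is an indecomposable summand of $\mathscr{E}_S(Y,\mathscr{L})$, then $j^*A$ is a nonzero summand of $j^*\overline{\mathscr{L}}_*\mathbf{1}_X\cong\mathscr{L}_*\mathbf{1}_V$, which by (D) has all its summands dense in $S_U$; so $j_W^*A\neq 0$ for all dense $W\subset U$. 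For arbitrary dense $U'\subset Y$ take $W=U\cap U'$ and factor $j_W^*$ through $j_{U'}^*$ to conclude $j_{U'}^*A\neq 0$. Your discussion of (D) is aimed at property (1); this separate use for property (2) is not made.

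Second, in the uniqueness argument the direction you need — that $\mathscr{E}_S(Y,\mathscr{L})$ is a summand of $\mathscr{E}'$ — does not follow from (2) and (3) alone, and ``applying the $U$-dense decomposition together with (2)'' does not deliver it: (2) says $\mathscr{E}'$ is dense, not that it is the \emph{maximal} $U$-dense summand of $f_*\mathbf{1}$, so $\mathscr{E}'$ could be a proper direct summand of the $U$-dense part. You must invoke property (1). Concretely: from your first step $\mathscr{E}_S(Y,\mathscr{L})\cong\mathscr{E}'\oplus R$ for some $R$; restricting to $U$ and using (1) for both objects gives $\mathscr{L}_*\mathbf{1}_V\cong\mathscr{L}_*\mathbf{1}_V\oplus j^*R$, so $j^*R=0$ by Krull--Schmidt in $S_U$; since $R$ is a summand of a $U$-dense object it is $U$-dense, forcing $R=0$. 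With these two repairs the proof is complete.
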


\begin{defi}
We define the \textbf{geometric extension} of $\mathscr{L}$ on $Y$ to be the object $\mathscr{E}_S(Y,\mathscr{L})$. When the local system is the identity, we call this the geometric extension on $Y$. We call the groups
\[\mathscr{E}_S^i(Y):=\Hom_{S_Y}(\mathbf{1}_Y,\mathscr{E}_{S}(Y)[i])\]
the geometric $S$-cohomology groups of $Y$.
\end{defi}

\begin{remark}
For a fixed $S$, one may replace smoothness with $S$-smoothness (see Definition \ref{Orientable}) in the preceeding definitions, with slightly more general results.
\end{remark}

One can think of these groups $\mathscr{E}^i_S(Y)$ concretely as unavoidable summands of the $S$-cohomology of any resolution of singularities of $X$.

\begin{warning}
In general, the object $\mathscr{E}_S(Y,\mathscr{L})$ depends on the geometry of the map $V\xrightarrow{\mathscr{L}} U$, not just on the object $\mathscr{L}_*\mathbf{1}_V$ in $S_U$. An example with further discussion is given in Example \ref{ex:legendrefamily}.
\end{warning}
We will now give some properties of these geometric extensions.

\begin{prop}
    If $\mathscr{L}$ and $\mathscr{L}'$ on $U$ and $V$ agree on $U\cap
    V$, then: \[\mathscr{E}_S(Y,\mathscr{L})\cong
      \mathscr{E}_S(Y, \mathscr{L'}).\]
    In particular, for $f:X\to Y$ a proper map with smooth source, the
    dense summand of $f_*\mathbf{1}_X$ depends only on the generic
    behaviour of the map $f$.
\end{prop}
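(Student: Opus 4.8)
The plan is to reduce the statement to Theorem \ref{thm:main2} by a gluing/extension argument on the open sets. First I would set $W := U \cup V$, which is again a dense (smooth, since $U$ and $V$ are) Zariski open in the irreducible variety $Y$. The hypothesis is that $\mathscr{L}|_{U\cap V}$ and $\mathscr{L}'|_{U\cap V}$ are isomorphic as geometric local systems, i.e. the total spaces fit into a common pullback square over $U\cap V$; I would use this to glue $V\xrightarrow{\mathscr{L}} U$ and $V'\xrightarrow{\mathscr{L}'} V$ along their common restriction to produce a single geometric local system $\mathscr{M}: \widetilde W \to W$ (smooth, proper, surjective) whose restriction to $U$ is $\mathscr{L}$ and whose restriction to $V$ is $\mathscr{L}'$. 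Gluing of schemes along an open subscheme is standard; smoothness and properness of $\mathscr{M}$ can be checked locally on the base and hence follow from the corresponding properties of $\mathscr{L}$ and $\mathscr{L}'$.

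Next I would invoke Theorem \ref{thm:main2} three times. Applying it to $(Y, W, \mathscr{M})$ gives a geometric extension $\mathscr{E}_S(Y,\mathscr{M})$. Now $\mathscr{M}$ restricts to $\mathscr{L}$ over $U\subset W$, so I claim $\mathscr{E}_S(Y,\mathscr{M})$ also satisfies the three characterising properties of $\mathscr{E}_S(Y,\mathscr{L})$: property (1) because $j_U^*\mathscr{E}_S(Y,\mathscr{M}) \cong \mathscr{M}_*\mathbf 1_{\widetilde W}$ restricts over $U$ to $\mathscr{L}_*\mathbf 1_V$ by base change (BC3), since $\mathscr M$ is proper; property (2) because a summand supported on a proper closed subset of $Y$ is a fortiori such a summand, so density/no-small-summands is inherited; property (3) because any proper map with smooth source $f: X\to Y$ restricting to $\mathscr{L}$ over $U$ — well, here one must be slightly careful, as $f$ need only restrict to $\mathscr L$ over $U$, not to $\mathscr M$ over $W$. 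To handle this I would instead argue at the level of Theorem \ref{mainthm}: the map $\mathscr{M}$ itself, being proper with smooth source, is one of the resolutions-type maps to which Theorem \ref{mainthm} applies with open set $U$, so its $U$-dense summand agrees with that of any $f$ restricting to $\mathscr L$ over $U$; by condition (D) and property (2) of Theorem \ref{thm:main2} these $U$-dense summands are exactly $\mathscr{E}_S(Y,\mathscr L)$ and $\mathscr{E}_S(Y,\mathscr M)$ respectively. The same argument with $V$ in place of $U$ gives $\mathscr{E}_S(Y,\mathscr M) \cong \mathscr{E}_S(Y,\mathscr L')$, and chaining the two isomorphisms yields the claim.

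For the ``in particular'' clause: given a proper map with smooth source $f: X\to Y$, let $U\subset Y$ be a dense smooth open over which $f$ is smooth (such $U$ exists by generic smoothness, shrinking to the smooth locus of $Y$ and then to where $f$ is smooth), so that $f|_{f^{-1}(U)}: f^{-1}(U)\to U$ is a geometric local system $\mathscr L_f$. By Theorem \ref{thm:main2}(3) and the Krull–Schmidt uniqueness of the $U$-dense summand, the dense summand of $f_*\mathbf 1_X$ is $\mathscr E_S(Y,\mathscr L_f)$, and two maps $f, f'$ agreeing generically have, after shrinking $U$ to a common open, $\mathscr L_f|_U \cong \mathscr L_{f'}|_U$, so the first part of the proposition gives the equality of dense summands.

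The main obstacle I anticipate is the bookkeeping around ``restricts to $\mathscr L$ over $U$'' versus ``over $W$'': one wants the geometric extension to be genuinely characterised by arbitrarily small generic data, and the cleanest way to see this is to push everything through Theorem \ref{mainthm} (where the open set $U$ is an input that can be shrunk freely) rather than through the packaged Theorem \ref{thm:main2}. Concretely, the key lemma to extract is that for a geometric local system $\mathscr L$ on $U$ and any dense open $U' \subset U$, one has $\mathscr{E}_S(Y,\mathscr L) \cong \mathscr{E}_S(Y,\mathscr L|_{U'})$ — this is itself an instance of Theorem \ref{mainthm} applied to the compactification $\widetilde{\mathscr L}$ of Proposition \ref{compactification} with the two open sets $U$ and $U'$ — and once this shrinking invariance is in hand, the gluing argument above goes through without friction.
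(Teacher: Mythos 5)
Your final ``cleanest'' argument --- compactify both $\mathscr{L}$ and $\mathscr{L}'$ via Proposition \ref{compactification} to proper maps with smooth source, observe that these agree over the dense open $U\cap V$, and apply Theorem \ref{mainthm} with $U\cap V$ as the auxiliary open --- is precisely the paper's proof, and the shrinking lemma $\mathscr{E}_S(Y,\mathscr{L})\cong\mathscr{E}_S(Y,\mathscr{L}|_{U'})$ you extract at the end is the essential content. The initial gluing construction of $\mathscr{M}$ on $U\cup V$ is an unnecessary detour (and, as you yourself notice, creates friction about whether an arbitrary $f$ restricting to $\mathscr{L}$ over $U$ also restricts to $\mathscr{M}$ over $W$); once you route everything through Theorem \ref{mainthm}, the glued object plays no role and can simply be dropped.
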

\begin{proof}
    The geometric extensions arise as summands of $f_*\mathbf{1}_X$, $g_*\mathbf{1}_{X'}$ for compactifications $f,g$ of these geometric local systems. Since these two maps agree on a dense open $U\cap V$, their dense summands are isomorphic by Theorem \ref{mainthm}.
\end{proof}
Consider a complex $\mathscr{L}$ isomorphic to $\bigoplus_i \mathscr{L}^i[-i]$ for local systems $\mathscr{L}^i$ on a dense subvariety of the smooth locus of $Y$. Define $IC(Y,\mathscr{L})$ to be the sum $\bigoplus_i IC(Y,\mathscr{L}^i)[-i]$ (see Remark \ref{ICdef}).
\begin{prop}\label{ICprop}
    If $S$ is the constructible derived category of sheaves over $\mathbb{Q}$, then the geometric extension is the intersection cohomology complex of sheaves:
    \[\mathscr{E}_S(Y,\mathscr{L})\cong IC(Y,\mathscr{L}_*\mathbb{Q}).\]
\end{prop}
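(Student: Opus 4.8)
The plan is to leverage Theorem~\ref{thm:main2} together with the Decomposition Theorem in the form recalled in Remark~\ref{ICdef}, so that everything reduces to checking that $IC(Y,\mathscr{L}_*\mathbb{Q})$ satisfies the three characterising properties (1)--(3) of $\mathscr{E}_S(Y,\mathscr{L})$. First I would recall the setup: pick a compactification $f : X \to Y$ of the geometric local system $\mathscr{L}$ as in Proposition~\ref{compactification}, with $X$ smooth and $f$ proper, agreeing with $\mathscr{L}$ over $U$. Since $f$ is proper with smooth source, the Decomposition Theorem applies to $f_*\mathbb{Q}_X$: it is a direct sum of shifts of $IC$ sheaves of semisimple local systems on strata. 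By Theorem~\ref{thm:main2}, $\mathscr{E}_S(Y,\mathscr{L})$ is the (unique up to isomorphism) dense summand of $f_*\mathbb{Q}_X$ with no summands supported on a proper closed subset, so I must identify this dense summand explicitly inside the decomposition of $f_*\mathbb{Q}_X$.

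The key step is the following. Restricting the Decomposition Theorem splitting of $f_*\mathbb{Q}_X$ to $U$ via $j^*$, and using that $IC(Y,\mathscr{M})|_U = \mathscr{M}[\dim]$-type terms only survive for strata meeting $U$ (all of which are the open stratum, since $U$ is smooth and irreducible), one sees that the summands of $f_*\mathbb{Q}_X$ with dense support are exactly those of the form $IC(Y,\mathscr{M})[-i]$ with $\mathscr{M}$ a summand of some $\mathscr{H}^?(\mathscr{L}_*\mathbb{Q}_V)$; the remaining summands are supported on $Y \setminus U$. Comparing with $j^*(f_*\mathbb{Q}_X) \cong \mathscr{L}_*\mathbb{Q}_V \cong \bigoplus_i \mathscr{H}^i(\mathscr{L}_*\mathbb{Q}_V)[-i]$ (the last isomorphism being the smooth case of the Decomposition Theorem, which also gives semisimplicity of each $\mathscr{H}^i$), I would conclude that the dense part of $f_*\mathbb{Q}_X$ is precisely $\bigoplus_i IC(Y,\mathscr{H}^i(\mathscr{L}_*\mathbb{Q}_V))[-i] = IC(Y,\mathscr{L}_*\mathbb{Q})$ in the notation of Remark~\ref{ICdef}. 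Thus properties (1) and (2) hold for $IC(Y,\mathscr{L}_*\mathbb{Q})$, and property (3) holds because this object is a summand of $f_*\mathbb{Q}_X$ for our chosen $f$ and hence, by Theorem~\ref{mainthm} (the dense summand depending only on the generic behaviour), for every such $f$. By the uniqueness clause in Theorem~\ref{thm:main2}, $\mathscr{E}_S(Y,\mathscr{L}) \cong IC(Y,\mathscr{L}_*\mathbb{Q})$.

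The main obstacle is the bookkeeping in the key step: making precise that no $IC$-summand supported on a proper closed subset can have dense support, and conversely that every $IC$-summand of a stratum contained in $\overline{U}=Y$ appearing in $f_*\mathbb{Q}_X$ matches a summand of $\mathscr{L}_*\mathbb{Q}_V$ after restriction. This is essentially the statement that $j^* IC(Y,\mathscr{M}) \cong \mathscr{M}$ for $\mathscr{M}$ a local system on $U$ (up to the shift convention) together with the fact that $IC$-sheaves are indecomposable with the appropriate support, so that the Krull--Schmidt decomposition on $Y$ restricts compatibly to the one on $U$; I would cite \cite{BBD} for these standard facts about perverse sheaves and $IC$ extensions. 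One small point to be careful about is normalisation: the $\mathscr{L}_*\mathbb{Q}_V$ here is the \emph{unshifted} derived pushforward, so the $IC$ functor in Remark~\ref{ICdef} is applied degree-by-degree to $\mathscr{H}^i$, and I would make sure the statement of the proposition is read with that convention, exactly as set up before Proposition~\ref{ICprop}.
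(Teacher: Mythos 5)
Your proposal is correct and takes essentially the same approach as the paper: apply the Decomposition Theorem to $f_*\mathbb{Q}_X$ for a compactification $f$ of $\mathscr{L}$, invoke the classification of simple perverse sheaves to write the summands as shifts of $IC$ sheaves, and identify the dense summand (which Theorem~\ref{thm:main2} says is $\mathscr{E}_S(Y,\mathscr{L})$) with $IC(Y,\mathscr{L}_*\mathbb{Q})$ by restricting the decomposition to $U$ and matching by Krull--Schmidt. The paper's own proof is more terse — it compresses the bookkeeping about which $IC$ summands survive restriction to $U$ into the single phrase ``the dense summand of $f_*\mathbf{1}_V$ is $IC(Y,\mathscr{L}_*\mathbf{1}_V)$'' — but the underlying argument is the same one you spell out.
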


\begin{proof}
    By the Decomposition Theorem \cite{BBD}, the pushforward
    $f_*\mathbf{1}_X$ is a direct sum of semisimple perverse sheaves. On
    the smooth locus of $Y$, this sheaf is the local system
    $f_*\mathbf{1}_V$. By the classification of simple perverse sheaves
    (\cite{BBD} or \cite[Theorem 3.4.5]{Achar}), we see that the dense summand of $f_*\mathbf{1}_V$ is $IC(Y,\mathscr{L}_*\mathbf{1}_V)$.
\end{proof}

In general, like intersection cohomology sheaves, the geometric extension gives a way to interpolate between $S$-cohomology and noncompact $S$-homology.

\begin{prop}\label{interpolate}
    For any resolution of singularities $f:\tilde{Y}\rightarrow Y$, chosen orientation $\gamma$ of $\tilde{Y}$, and choice of split inclusion $\mathscr{E}_S(Y)\rightarrow f_*\mathbf{1}_{\tilde{Y}}$, we obtain a sequence: \[\mathbf{1}_Y\rightarrow \mathscr{E}_S(Y)\rightarrow \omega_Y[-2d_Y]\] such that the composite is an orientation of $Y$.
\end{prop}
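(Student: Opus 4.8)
The plan is to obtain the two maps by slicing the canonical construction of the induced orientation of $Y$ through the summand $\mathscr{E}_S(Y)$, and then to check that inserting the corresponding idempotent changes nothing over the smooth locus.

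Write $d:=d_Y=d_{\tilde Y}$ and let $U$ be the smooth locus of $Y$, $j\colon U\hookrightarrow Y$; we may assume $f$ restricts to an isomorphism over $U$ (as in the earlier proof that a resolution orients its target). Since $\tilde Y$ is smooth and $S$ is smoothly orientable, the chosen orientation $\gamma$ is an isomorphism $\mathbf{1}_{\tilde Y}\xrightarrow{\sim}\omega_{\tilde Y}[-2d]$, and the induced orientation $\gamma_Y$ of $Y$ is the composite
\[
\gamma_Y\colon\ \mathbf{1}_Y \xrightarrow{u} f_*\mathbf{1}_{\tilde Y} \xrightarrow{f_*\gamma} f_*\omega_{\tilde Y}[-2d] \xrightarrow{c_f^{-1}} f_!\omega_{\tilde Y}[-2d] \xrightarrow{\varepsilon} \omega_Y[-2d],
\]
where $u$ is the unit of $(f^*,f_*)$ (using $f^*\mathbf{1}_Y=\mathbf{1}_{\tilde Y}$), $\varepsilon$ the counit of $(f_!,f^!)$ (using $f^!\omega_Y=\omega_{\tilde Y}$), and $c_f^{-1}$ exists because $f$ is proper; recall that $j^*\gamma_Y$ is an isomorphism.

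Now let $\iota\colon\mathscr{E}_S(Y)\to f_*\mathbf{1}_{\tilde Y}$ be the given split inclusion, $\pi$ a retraction with $\pi\iota=\mathrm{Id}$, and $p:=\iota\pi$ the idempotent cutting out $\mathscr{E}_S(Y)$, with complementary summand $A_0$. Set
\[
a:=\pi\circ u\colon\mathbf{1}_Y\to\mathscr{E}_S(Y),\qquad b:=\varepsilon\circ c_f^{-1}\circ(f_*\gamma)\circ\iota\colon\mathscr{E}_S(Y)\to\omega_Y[-2d].
\]
Then $b\circ a=\varepsilon\circ c_f^{-1}\circ(f_*\gamma)\circ p\circ u$, that is, $\gamma_Y$ with the idempotent $p$ spliced in right after the unit. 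To finish we must show $b\circ a$ is an orientation of $Y$, i.e.\ (Definition \ref{fundclass}) an isomorphism over $U$. Since $j^*\gamma_Y$ is an isomorphism, it suffices to prove $j^*p=\mathrm{Id}$, equivalently $j^*A_0=0$. Because $f$ is an isomorphism over $U$, base change (BC3) along the open immersion $j$ gives $j^*(f_*\mathbf{1}_{\tilde Y})\cong (f|_U)_*\mathbf{1}_{f^{-1}(U)}\cong\mathbf{1}_U$; since $\mathscr{E}_S(Y)$ is dense it is in particular $U$-dense, so $j^*\mathscr{E}_S(Y)\neq 0$. Applying $j^*$ to $f_*\mathbf{1}_{\tilde Y}\cong\mathscr{E}_S(Y)\oplus A_0$ gives $\mathbf{1}_U\cong j^*\mathscr{E}_S(Y)\oplus j^*A_0$ with $\mathbf{1}_U$ indecomposable (condition \ref{C3}, as $U$ is irreducible); hence $j^*A_0=0$, so $j^*(b\circ a)=j^*\gamma_Y$ is an isomorphism and $b\circ a$ is an orientation of $Y$.

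The argument is essentially formal. The only steps carrying any content are the identity $j^*(f_*\mathbf{1}_{\tilde Y})\cong\mathbf{1}_U$ and the fact that $j^*\gamma_Y$ is an isomorphism, both of which come down to base change (BC3) for the étale map $j$ together with $\gamma$ being an isomorphism over all of $\tilde Y$ (since $\tilde Y$ is smooth). I do not anticipate any genuine obstacle.
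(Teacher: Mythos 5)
Your proof is correct and is essentially the paper's argument: the paper's sequence $\mathbf{1}_Y\to f_*\mathbf{1}_{\tilde Y}\to\mathscr{E}_S(Y)\to f_*\mathbf{1}_{\tilde Y}\cong f_!\mathbf{1}_{\tilde Y}\cong f_!\omega_{\tilde Y}[-2d]\to\omega_Y[-2d]$ is exactly your composite $\varepsilon\circ c_f^{-1}\circ(f_*\gamma)\circ p\circ u$, factored through the idempotent $p=\iota\pi$. The only difference is that where the paper simply asserts that all of these maps are isomorphisms over $U$, you actually justify the nontrivial part of that claim by showing $j^*A_0=0$ via indecomposability of $\mathbf{1}_U$ and density of $\mathscr{E}_S(Y)$, which fills in a detail the paper leaves implicit.
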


\begin{proof}
  We have the following maps of sheaves on $Y$:
  \[\mathbf{1}_Y\rightarrow f_*\mathbf{1}_{\tilde{Y}}\rightarrow \mathscr{E}_S(Y)\rightarrow f_*\mathbf{1}_{\tilde{Y}}\cong f_!\mathbf{1}_{\tilde{Y}}\cong f_!\omega_{\tilde{Y}}[-2d]\rightarrow \omega_Y[-2d]\]
  These maps are all isomorphisms over $U$, so the composite is an orientation of $Y$.
\end{proof}
The following shows that for $S$-smooth varieties, the geometric extension is just the constant sheaf.
\begin{prop}
    If $S$ is a smoothly orientable base change formalism, and $Y$ is $S$-orientable, then the geometric extension is the constant sheaf on $Y$:\[\mathscr{E}_S(Y)\cong \mathbf{1}_Y\]
\end{prop}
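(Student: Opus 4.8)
The plan is to produce a split inclusion $\mathbf{1}_Y \hookrightarrow \mathscr{E}_S(Y)$ whose image is all of $\mathscr{E}_S(Y)$, i.e.\ to show that the canonical comparison between $\mathbf{1}_Y$ and the dense summand of a resolution is already an isomorphism when $Y$ is $S$-orientable. The cleanest route is to apply Theorem \ref{mainthm} with $X = X' = Y$ and $f = g = \mathrm{id}_Y$: since $Y$ is $S$-orientable it is $S$-smooth, so the identity map $\mathrm{id}_Y : Y \to Y$ is a proper, surjective map with $S$-smooth source. (As noted in the remark following the definition of geometric extension, one may run the whole theory with $S$-smoothness in place of smoothness.) Applying Theorem \ref{thm:main2} to the trivial geometric local system $\mathscr{L} = \mathrm{id}_U$ over any dense open $U \subset Y$, property (3) says $\mathscr{E}_S(Y)$ is a summand of $(\mathrm{id}_Y)_* \mathbf{1}_Y = \mathbf{1}_Y$. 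But $\mathbf{1}_Y = t^*\mathbf{1}$ is indecomposable by condition (C3) of the base change formalism (as $Y$ is irreducible), so the only nonzero summand of $\mathbf{1}_Y$ is $\mathbf{1}_Y$ itself. Since $\mathscr{E}_S(Y)$ is dense, hence nonzero, we conclude $\mathscr{E}_S(Y) \cong \mathbf{1}_Y$.

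The one point that needs care is the hypothesis of Theorem \ref{mainthm} (and of Theorem \ref{thm:main2}): these are stated for \emph{smooth} $X$, not merely $S$-smooth $X$. So I would either invoke the remark that permits replacing smoothness by $S$-smoothness throughout, or — if one prefers to stay within the letter of the stated theorems — argue directly as follows. Pick a resolution $f : \tilde Y \to Y$ with $\tilde Y$ smooth, which exists by hypothesis; then $\mathscr{E}_S(Y)$ is by definition the dense summand of $f_* \mathbf{1}_{\tilde Y}$. Proposition \ref{interpolate} already gives maps $\mathbf{1}_Y \xrightarrow{a} \mathscr{E}_S(Y) \xrightarrow{b} \omega_Y[-2d_Y]$ with $b \circ a$ an $S$-orientation of $Y$; here is where $S$-orientability of $Y$ enters: by Definition \ref{Orientable} there is an isomorphism $\theta : \mathbf{1}_Y \xrightarrow{\sim} \omega_Y[-2d_Y]$, and I claim $a$ is already a split injection with $\mathbf{1}_Y$ as a summand. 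Indeed $b \circ a$ restricts to an isomorphism over the smooth locus $U$ of $Y$, and so does $\theta$, hence $\theta^{-1}\circ (b\circ a) : \mathbf{1}_Y \to \mathbf{1}_Y$ is the identity over $U$; applying Lemma \ref{isolift} with $F = j^*$ and $A = \mathbf{1}_Y$ (which is $j^*$-dense since it is indecomposable with nonzero restriction) shows $\theta^{-1} \circ b \circ a$ is an automorphism of $\mathbf{1}_Y$. Therefore $a$ is split injective, so $\mathbf{1}_Y$ is a summand of $\mathscr{E}_S(Y)$; but $\mathscr{E}_S(Y)$ is indecomposable (it is the dense summand and, being an extension of the indecomposable $\mathbf{1}_U$ over a dense open, is itself indecomposable — this is part of Theorem \ref{thm:main1} in the constructible case and follows from density in general), so $a$ is an isomorphism.

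I expect the only real obstacle to be bookkeeping about ``smooth'' versus ``$S$-smooth'' in the cited statements, plus the minor point that $\mathscr{E}_S(Y)$ is indecomposable in the general base-change setting — but density together with the fact that its restriction to $U$ is $\mathbf{1}_U$ (indecomposable by (C3)) forces indecomposability, since any nontrivial decomposition would restrict to a nontrivial decomposition of $\mathbf{1}_U$ or introduce a summand not supported densely. Everything else is a formal consequence of Lemma \ref{isolift} and Definition \ref{Orientable}.
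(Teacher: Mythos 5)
Your fallback argument (the one via Proposition \ref{interpolate}) is essentially the paper's proof: both produce the factorisation $\mathbf{1}_Y \xrightarrow{a} \mathscr{E}_S(Y) \xrightarrow{b} \omega_Y[-2d]$ with $b\circ a$ an orientation, both use $S$-orientability to get an isomorphism $\theta : \mathbf{1}_Y \to \omega_Y[-2d]$, and both conclude by a Krull--Schmidt lifting. The paper phrases the last step as ``pushforward orientation is an iso, then invoke Proposition \ref{mutinv}''; you instead show $a$ is split injective and appeal to indecomposability of $\mathscr{E}_S(Y)$ (which you correctly derive from condition (C3) plus density). These are equivalent. One small imprecision: $\theta^{-1}\circ b\circ a$ need not restrict to the \emph{identity} over $U$ — it is merely an automorphism of $\mathbf{1}_U$ — so Lemma \ref{isolift} does not apply verbatim. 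But the conclusion still holds: $\mathbf{1}_Y$ is indecomposable, $\End(\mathbf{1}_Y)$ is local, $j^*$ is Krull--Schmidt, and a non-radical element of a local ring is a unit; so any endomorphism of $\mathbf{1}_Y$ whose restriction to $U$ is invertible is itself invertible. Your primary approach (feeding $f=g=\mathrm{id}_Y$ into the main theorem) is a genuinely different and arguably slicker route; you are right that it leans on the remark permitting $S$-smoothness in place of smoothness in Theorem \ref{mainthm}, which is why the paper avoids it, but it is a legitimate alternative.
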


\begin{proof}
For a resolution $\pi:\tilde{Y}\rightarrow Y$ of $Y$, and chosen
isomorphism
$\gamma:\mathbf{1}_{\tilde{Y}}\rightarrow\omega_{\tilde{Y}}[-2d]$ of
$\tilde{Y}$, then we claim that the pushforward orientation of $Y$
is an isomorphism $\mathbf{1}_Y\rightarrow \omega_Y[-2d]$. This
morphism is between indecomposable, isomorphic objects, and is thus
an isomorphism over $U$, so is an isomorphism by the Krull-Schmidt
property. Composing with the inverse isomorphism
$\omega_Y[-2d]\rightarrow \mathbf{1}_Y$ then gives the following
commutative diagram: 
\[\begin{tikzcd}
    \mathbf{1}_Y\arrow[r]\arrow[d,equal]&\mathscr{E}_S(Y)\arrow[d]\\
    \mathbf{1}_Y&\omega_Y[-2d]\arrow[l].
\end{tikzcd}\]
The result then follows from Proposition \ref{mutinv}.
\end{proof}

The maps of Proposition \ref{interpolate} induce the following interpolation morphisms
\[S^*(Y)\rightarrow \mathscr{E}_S^*(Y)\rightarrow S^!_{2d_Y-\ast}(Y).\]

This interpolation perspective also lets us extract a canonical invariant of our singular space, the (co)kernel of the induced map $S^*(Y)\to \mathscr{E}^*_S(Y)$. 
\begin{defi}
    Let $Y$ be irreducible and projective. The \textbf{geometrically pure} $S$-cohomology of $Y$ is the quotient \[S^*_{gp}(Y):=\frac{S^*(Y)}{\text{ker}(S^*(Y)\rightarrow \mathscr{E}^*_S(Y))}\]
    Similarly, the \textbf{geometrically non-pure} $S$-cohomology of $Y$ is this kernel \[S^*_{gnp}(Y):=\text{ker}(S^*(Y)\rightarrow \mathscr{E}^*_S(Y)).\]
\end{defi}
That these objects are independent of the choices involved in their construction is the content of the following Lemma.

\begin{lem}\label{comp}
    Let $Y$ be irreducible, with two resolutions of singularities \[f_i:\tilde{Y}_i\rightarrow Y\ \ \  \text{for}\  i\in \{1,2\}. \]
    Assume for each map we have a chosen a split projection onto the geometric extension of $Y$: \[{f_i}_*\mathbf{1}\xrightarrow{\pi_i}\mathscr{E}_S(Y).\]
    Then there exists an isomorphism $\beta$ of $\mathscr{E}_S(Y)$ such that the following diagram commutes:
    \[\begin{tikzcd}
        \mathbf{1}\arrow[dr]\arrow[r]& {f_1}_*\mathbf{1}\arrow[r,"\pi_1"]&\mathscr{E}_S(Y)\arrow[d,dashed,"\beta"]\\
    &{f_2}_*\mathbf{1}\arrow[r,"\pi_2"]&\mathscr{E}_S(Y)
    \end{tikzcd}\]
\end{lem}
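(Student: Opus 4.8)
The plan is to reduce to the case where one of the two resolutions dominates the other, and then to use that the comparison morphisms of Theorem~\ref{mainthm} are compatible with adjunction units. First I would form a common dominating resolution: fix a dense open $U\subseteq Y$ over which both $f_1,f_2$ are isomorphisms, take the closure $\overline{\Delta(U)}$ of the diagonal inside $\tilde Y_1\times_Y\tilde Y_2$, choose a resolution $Z\to\overline{\Delta(U)}$, and let $h\colon Z\to Y$ be the resulting map. Then $h$ is again a resolution of $Y$ and it factors as $h=f_1\circ a=f_2\circ b$ with $a\colon Z\to\tilde Y_1$, $b\colon Z\to\tilde Y_2$ proper birational morphisms (isomorphisms over $U$). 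After fixing a split projection $\pi_h\colon h_*\mathbf 1_Z\to\mathscr E_S(Y)$, it is enough to show that the map $\pi_1\circ u_{f_1}$ agrees with $\pi_h\circ u_h$, and likewise $\pi_2\circ u_{f_2}$ with $\pi_h\circ u_h$, in $\Hom_{S_Y}(\mathbf 1_Y,\mathscr E_S(Y))$ up to post-composition with an automorphism of $\mathscr E_S(Y)$; the lemma then follows by composing the two resulting automorphisms. Here $u_g\colon\mathbf 1_Y\to g_*\mathbf 1_{\tilde Y}$ denotes the adjunction unit attached to a resolution $g$.

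For the domination step, put $f=f_1$, $X=\tilde Y_1$, let $u_a\colon\mathbf 1_X\to a_*\mathbf 1_Z$ be the unit, and set $A:=f_*(u_a)\colon f_*\mathbf 1_X\to f_*a_*\mathbf 1_Z=h_*\mathbf 1_Z$. Functoriality of adjunction units for the composite $h=f\circ a$ gives the identity $A\circ u_f=u_h$, and since $a$ is an isomorphism over $U$ the morphism $A$ restricts over $U$ to an isomorphism of $\mathbf 1_U$. Hence, by Proposition~\ref{mutinv} (applied to $A$ together with a transfer morphism $h_*\mathbf 1_Z\to f_*\mathbf 1_X$ built from the counit $a_!a^!\mathbf 1_X\to\mathbf 1_X$ and the isomorphism $a^!\mathbf 1_X\cong\mathbf 1_Z$, available because $a$ is a proper birational morphism of $S$-smooth varieties), if $\iota_1$ is a section of $\pi_1$ then $\lambda:=\pi_h\circ A\circ\iota_1$ is an automorphism of $\mathscr E_S(Y)$. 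Writing $u_f=\iota_1\circ(\pi_1 u_f)+\nu_f$, where $\nu_f\colon\mathbf 1_Y\to\ker\pi_1$ is the component of $u_f$ in the non-dense summand, one computes $\pi_h u_h=\pi_h A u_f=\lambda\,(\pi_1 u_f)+\big(\pi_h\circ A|_{\ker\pi_1}\big)\circ\nu_f$, so the whole lemma comes down to the vanishing of the error term $\big(\pi_h\circ A|_{\ker\pi_1}\big)\circ\nu_f$. (When this vanishing is known, one can even choose $\pi_h$ adapted to $A$ so that $\pi_h u_h=\pi_1 u_{f_1}$ holds exactly, using split injectivity of $A$; this streamlines the bookkeeping.)

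The vanishing of that error term I expect to be the main obstacle, and it follows from the claim $(\star)$: for any resolution $g\colon\tilde Y\to Y$ the unit $u_g\colon\mathbf 1_Y\to g_*\mathbf 1_{\tilde Y}$ has zero component in every summand of $g_*\mathbf 1_{\tilde Y}$ supported on a proper closed subset — equivalently, $u_g$ factors through the maximal dense summand $\mathscr E_S(Y)$. For the constructible coefficients of Theorem~\ref{thm:main1} this is immediate: $\End_{S_{\tilde Y}}(\mathbf 1_{\tilde Y})=H^0(\tilde Y,\Lambda)=\Lambda$ since $\tilde Y$ is connected, so under the adjunction isomorphism $\Hom_{S_Y}(\mathbf 1_Y,g_*\mathbf 1_{\tilde Y})\cong\End_{S_{\tilde Y}}(\mathbf 1_{\tilde Y})=\Lambda$ the splitting $g_*\mathbf 1_{\tilde Y}=\mathscr E_S(Y)\oplus(\text{non-dense})$ induces a direct sum decomposition of $\Lambda$; as $\Lambda$ is local it has no nontrivial idempotents, and the $\mathscr E_S(Y)$-component of $u_g$ is nonzero (it restricts to an isomorphism over $U$), so the non-dense component must vanish. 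For a general smoothly orientable Krull–Schmidt base change formalism one instead needs the compatibility of the Theorem~\ref{mainthm} comparison morphisms with units, which should be read off from their construction by convolution against the fundamental class of the graph closure $\overline{\Delta_\alpha}$ (Proposition~\ref{convisocomp}) and the corresponding composition law for convolutions. Granting $(\star)$, the error term is zero, so $\pi_h u_h=\lambda\,(\pi_1 u_{f_1})$ and symmetrically $\pi_h u_h=\lambda'\,(\pi_2 u_{f_2})$; hence $\pi_2 u_{f_2}=\beta\,(\pi_1 u_{f_1})$ with $\beta=(\lambda')^{-1}\lambda\in\mathrm{Aut}(\mathscr E_S(Y))$, which is the assertion of the lemma.
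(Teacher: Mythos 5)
Your proof takes essentially the same route as the paper's: form a common resolution by resolving the diagonal component of $\tilde Y_1\times_Y\tilde Y_2$, define $\beta_i$ as the round-trip $\mathscr E_S(Y)\xrightarrow{\iota_i}{f_i}_*\mathbf 1\to{f_3}_*\mathbf 1\xrightarrow{\pi_3}\mathscr E_S(Y)$, observe it is an automorphism because it restricts to an isomorphism over $U$, and set $\beta=\beta_2^{-1}\beta_1$. The genuine value in your write-up is that you make explicit the step the paper leaves implicit: to conclude $\beta_2\pi_2u_{f_2}=\pi_3u_{f_3}=\beta_1\pi_1u_{f_1}$ one needs the "error term" $\pi_3A_i(\iota_i\pi_i-\mathrm{Id})u_{f_i}$ to vanish, and this is exactly your claim $(\star)$ that the unit $u_g\colon\mathbf 1_Y\to g_*\mathbf 1$ factors through the dense summand. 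Your argument for $(\star)$ in the constructible case is clean and correct: by adjunction $\Hom(\mathbf 1_Y,g_*\mathbf 1)\cong\End(\mathbf 1_{\tilde Y})\cong\Lambda$, which is a rank-one free $\Lambda$-module and hence admits no nontrivial direct-sum decomposition; since the dense component of $u_g$ is nonzero, the non-dense one must vanish.

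Two caveats. First, when you invoke Proposition~\ref{mutinv} on $A$ and your transfer map, that proposition requires the two restrictions over $U$ to be \emph{mutually inverse}, not merely isomorphisms, so you should choose the orientation entering the transfer so that the composites restrict to the identity; this is a small but real normalisation issue. Second, and more importantly, for a general smoothly orientable Krull--Schmidt base change formalism (the generality in which Lemma~\ref{comp} is stated and used, e.g.\ $KU_p$-modules) the identification $\Hom(\mathbf 1_Y,g_*\mathbf 1)\cong\End(\mathbf 1_{\tilde Y})$ no longer produces something indecomposable as an $\End(\mathbf 1_\ast)$-module (for $KU_p$ it is $KU_p^0(\tilde Y)$, which typically decomposes), so your argument for $(\star)$ does not carry over; you acknowledge this and gesture at a convolution-compatibility argument, but do not supply it. To be fair, the paper's own one-line justification for the commutativity of its diagram is no more detailed, so you have sharpened rather than weakened the exposition; but a reader of your proof should be aware that $(\star)$ for general $S$ remains to be justified.
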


\begin{proof}
   Resolving the diagonal irreducible component of the fibre product $Y_1\times_Y Y_2$, we may find a third resolution of singularities of $Y$, dominating $f_i$:\[\begin{tikzcd}
       \tilde{Y}_3\arrow[d,"f_3"]\arrow[r] &\tilde{Y}_1\times_Y \tilde{Y}_2\arrow[dl] \\Y
   \end{tikzcd}\]
   Then for any choice of split projection ${f_3}_*\mathbf{1}\rightarrow \mathscr{E}_S(Y)$, we have the following commutative diagram for $i\in \{1,2\}$:
   \[\begin{tikzcd}
       \mathbf{1}\arrow[r]&{f_i}_*{\mathbf{1}}\arrow[r]&{f_3}_*\mathbf{1}\arrow[d]\\
       &\mathscr{E}_S(Y)\arrow[u]\arrow[r,dashed,"\beta_i"]&\mathscr{E}_S(Y)
   \end{tikzcd}\]
   These maps $\beta_i$ defined as the composition are isomorphisms by
  Theorem \ref{thm:main1}, so their composite $\beta_2^{-1}\circ \beta_1$ gives the desired isomorphism.
\end{proof}

\begin{remark}
In the case of $\mathbb{Q}$-constructible coefficients, the geoemtrically pure (resp geometrically non-pure) is
precisely the pure (resp non-pure) cohomology in the mixed Hodge structure on
$H^*(Y,\mathbb{Q})$. To see this, recall that the mixed Hodge
structure on a singular, projective variety is given by resolving $Y$ by
a smooth simplicial hypercover, and the pure component is the first
quotient of the associated spectral sequence \cite{DeligneHodgeIII}.
\end{remark}

\begin{ex} \label{ex:A/pm}
Let us consider the geometric extension on the space $\mathbb{A}^n_{\mathbb{C}}/\pm 1 $ with constructible coefficients over a field $k$ of characteristic two. We will show that the geometric extension over $k$ is exactly $\pi_*\mathbf{1}$ for a resolution $\pi$ that contracts a divisor over $0$. We will thus have nonzero cohomology in degree $2(n-1)$ in the stalk over $0$. This gives the geometric consequence that any resolution of singularities of this space must contract a divisor, by Corollary \ref{ineq}.\footnote{As explained to us by Burt Totaro, this may also be easily seen algebro-geometrically by the fact that our space $\mathbb{A}^n/\pm 1$ is $\mathbb{Q}$-factorial, as follows. Let $X\xrightarrow{\pi} \mathbb{A}^n/\pm 1$ be a resolution of singularities, and $D$ a chosen very ample Weil divisor on $X$. As $\mathbb{A}^n/\pm 1$ is $\mathbb{Q}$ factorial, a positive multiple $n\pi_*(D)$ of the Weil divisor $\pi_*(D)$ is Cartier. Pulling this back gives the Cartier divisor $\pi^*(n\pi_*(D))$ on $X$. If our exceptional fibre has codimension at least $2$, then this divisor on $X$ would be $nD$, but then $D$ cannot be very ample, as its sections don't separate points in the exceptional fibre.}

Consider the following diagram of blowups and quotients:

\[\begin{tikzcd}
\text{Bl}_0(\mathbb{A}_\mathbb{C}^n)\arrow[r,"\simeq"]\arrow[d,"\tilde{\pi}"]\arrow[dr,"\simeq"]&\mathbb{P}_\mathbb{C}^{n-1}\\
\mathbb{A}_\mathbb{C}^{n}\arrow[dr]&\text{Bl}_0(\mathbb{A}_\mathbb{C}^{n})/\pm 1\arrow[u,"\simeq"]\arrow[d,"\pi"]&\mathbb{P}_\mathbb{C}^{n-1}\arrow[l]\arrow[d]\\
&\mathbb{A}_\mathbb{C}^{n}/\pm 1&\{0\}\arrow[l]
\end{tikzcd}\]

The space $\text{Bl}_0(\mathbb{A}_\mathbb{C}^n)$ via its projection map to $\mathbb{P}_\mathbb{C}^{n-1}$ is the total space of the tautological bundle, and this quotient $\text{Bl}_0(\mathbb{A}_\mathbb{C}^{n})/\pm 1$ is obtained by taking the quotient under the inversion map on the (vector space) fibres. So we see the maps with $\simeq$ are homotopy equivalences of topological spaces, and $\text{Bl}_0(\mathbb{A}_\mathbb{C}^n)/\pm 1$ is smooth. So we obtain a resolution of the singular space $\mathbb{A}_\mathbb{C}^n/\pm 1$, with domain homotopic to $\mathbb{P}_\mathbb{C}^{n-1}$. On our base, $0$ is the unique singular point, and the fibre over this singular point in this resolution is $\mathbb{P}_\mathbb{C}^{n-1}$. Its complement is $\mathbb{A}_\mathbb{C}^n-\{0\}/\pm1$, which is naturally homeomorphic to the space $\mathbb{RP}^{2n-1}\times \mathbb{R}$.

We claim that the geometric extension is just the pushforward $\pi_*\mathbf{1}$. To show this, we need to check that this sheaf is indecomposable, which is equivalent to showing that it has no skyscraper summands at the singular point.

To check this, consider the compactly supported cohomology of the open-closed triangle for the inclusion of the singular point:\[j_!j^!\pi_*\mathbf{1}\rightarrow \pi_*\mathbf{1}\rightarrow i_*i^*\pi_*\mathbf{1}\xrightarrow{+1}\]

By base change, the ompactly supported cohomology of $j_!j^!\pi_*\mathbf{1}$ is the compactly supported cohomology of $\mathbb{A}_\mathbb{C}^n-\{0\}/\pm1\simeq \mathbb{RP}^{2n-1}\times \mathbb{R}$. Similarly, by base change, the sheaf $i_*i^*\pi_*\mathbf{1}$ computes the cohomology of the fibre, which is $\mathbb{P}_\mathbb{C}^{n-1}$. The middle term computes the compactly supported cohomology of the total space, which is a complex line bundle over $\mathbb{P}_\mathbb{C}^{n-1}$, and so gives the cohomology of $\mathbb{P}^{n-1}_\mathbb{C}$, shifted by $2$. Applying the compactly supported cohomology functor $\Hom^*_{k}(\mathbf{1},t_!\_)$ yields an exact triangle:
\[\begin{tikzcd}
    H^*_!(\mathbb{RP}^{2n-1}\times \mathbb{R},k)\arrow[r]&H^{*-2}(\mathbb{P}_\mathbb{C}^{n-1},k)\arrow[d]\\
    &H^*(\mathbb{P}_\mathbb{C}^{n-1},k)\arrow[ul,"+1"]
\end{tikzcd}\]

Since the characteristic of $k$ is two, $H^*_!(\mathbb{RP}^{2n-1}\times \mathbb{R})$ is nonzero in all degrees between $1$ and $2n$ inclusive. So this $+1$ degree map must be injective by the parity of the cohomology of $\mathbb{CP}^{n-1}$. Thus, this extension is maximally nonsplit and there can be no skyscraper sheaf summand. So this $\pi_*\mathbf{1}$ is indecomposable in characteristic two, giving the desired nonzero cohomology in the stalk. (This may also be seen using intersection forms (see \cite[\S \S3.2-3.3]{JMW}). The refined intersection form is identically zero modulo 2.)

\end{ex}

The previous example shows that geometric extensions for the constructible base change formalism over fields need not be a perverse, and by similar ideas we obtain the following geometric consequence.

\begin{prop} \label{prop:no-semi-small}
    If for some field $k$, the geometric extension of $Y$ over $k$ is not perverse up to shift in $D^b_c(Y,k)$, then $Y$ does not admit a semismall resolution. 
\end{prop}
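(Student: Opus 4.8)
The plan is to argue by contrapositive: assuming $Y$ admits a semismall resolution $\pi : X \to Y$, I will show that the geometric extension $\mathscr{E}_k(Y)$ is perverse up to shift. The key input is Theorem \ref{thm:main1}, which tells us that $\mathscr{E}_k(Y)$ is an indecomposable summand of $\pi_* \mathbf{1}_X$ inside $D^b_c(Y,k)$. So it suffices to observe that a direct summand of a perverse sheaf (up to the relevant shift) is again perverse, and that $\pi_* k_X[\dim_{\mathbb C} X]$ is perverse when $\pi$ is semismall.

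First I would recall the relevant shift conventions: for $X$ smooth of complex dimension $d$, the constant sheaf $k_X[d]$ is perverse (indeed, up to shift it is the intersection cohomology sheaf of $X$). The classical fact (see \cite{BBD}, or \cite[\S 3.2]{dCM}) is that for a proper \emph{semismall} map $\pi : X \to Y$ with $X$ smooth, $\pi_* k_X[d]$ is a perverse sheaf on $Y$; this is essentially the definition-theorem characterising semismallness via the support/cosupport conditions on $\pi_* k_X[d]$, and it holds with arbitrary field coefficients since the semismallness estimate $2\dim_{\mathbb C} \pi^{-1}(y) \le d - \dim_{\mathbb C}(\text{stratum through } y)$ is purely a statement about fibre dimensions and the stalk/costalk bounds follow from proper base change plus the dimension bound on (co)homology of the fibres. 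Then, since the category of perverse sheaves is an abelian subcategory of $D^b_c(Y,k)$ closed under direct summands, any direct summand of $\pi_* k_X[d]$ is perverse; in particular $\mathscr{E}_k(Y)[d]$ is perverse, i.e. $\mathscr{E}_k(Y)$ is perverse up to shift, contradicting the hypothesis.

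The one point requiring a little care is matching up shifts and the fact that Theorem \ref{thm:main1} produces $\mathscr{E}_k(Y)$ as a summand of $\pi_* \mathbf{1}_X$ without a shift; one simply shifts everything by $d = \dim_{\mathbb C} X = \dim_{\mathbb C} Y$ (recall $\pi$ is surjective with $X$ irreducible, so the dimensions agree), and notes that $\mathscr{E}_k(Y)$ being ``perverse up to shift in $D^b_c(Y,k)$'' is precisely the statement that $\mathscr{E}_k(Y)[m]$ is perverse for some $m$, which here we can take to be $m = d$. I expect the main (and really only) obstacle to be citing/justifying cleanly the statement that semismallness of $\pi$ with $X$ smooth implies $\pi_* k_X[d]$ is perverse over an arbitrary field $k$ — this is standard over $\mathbb{Q}$ but one should be slightly careful that the argument (which only uses dimension counts of fibres together with $H^i$ vanishing ranges, via proper base change) is insensitive to the coefficient field, so no appeal to the Decomposition Theorem is needed.
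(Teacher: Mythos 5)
Your proof is correct and takes essentially the same route as the paper's: the geometric extension is a summand of $\pi_*\mathbf{1}_X$ for any resolution $\pi$, and for a semismall resolution $\pi_* k_X[\dim X]$ is perverse (a fact that indeed holds over any field since it rests only on fibre-dimension bounds and proper base change), so its summands are perverse, giving the contradiction. The paper states this more tersely but with no additional ideas; your version usefully spells out the shift bookkeeping and the coefficient-independence of the semismall-implies-perverse fact.
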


\begin{proof}
    For such an $Y$, the geometric extension is a summand of $\pi_*\mathbf{1}_{\tilde{Y}}$ for any resolution $\pi:\tilde{Y}\rightarrow Y$. Since the geometric extension is assumed to not be perverse, no resolution can be semismall.
\end{proof}

The following is an immediate corollary of Theorem \ref{thm:main2}, though we suspect there is a more direct way to see the result.

\begin{prop}[Zariski trivial is cohomologically trivial.]
    Let $f:X\rightarrow Y$ be a smooth proper morphism between smooth varieties. Then if $f$ is Zariski locally trivial, then $f_*\mathbf{1}_X$ is the trivial local system on the fibre for any smoothly orientable base change formalism.
\end{prop}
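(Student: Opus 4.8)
The plan is to compare $f$ with the globally trivial family with the same fibre, and then invoke Theorem~\ref{thm:main2} (more precisely, Theorem~\ref{mainthm}) together with condition~(D). Fix a nonempty fibre $F = f^{-1}(y_0)$; since $f$ is smooth and proper, $F$ is a smooth proper variety, and since $f$ is Zariski locally trivial there is a dense open $j : U_0 \hookrightarrow Y$ together with an isomorphism $f^{-1}(U_0) \xrightarrow{\ \sim\ } U_0 \times F$ commuting with the maps to $U_0$. (If $F$ is empty then $X$ is empty and the statement is vacuous.) Set $X' := Y \times F$ and $g := \mathrm{pr}_1 : X' \to Y$. Then $g$ is a proper surjective map with smooth source, and by construction $f$ and $g$ are isomorphic over $U_0$ in the sense required by Theorem~\ref{mainthm}. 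If $Y$ or $F$ fails to be irreducible, one first writes $X$ and $X'$ as disjoint unions of connected components (all smooth, hence irreducible); the trivialization over $U_0$ matches the components of $X$ bijectively with the components $Y \times F_b$ of $X'$, compatibly with the two maps to $U_0$, and one argues componentwise. We suppress this reduction and assume $X$, $X'$ irreducible.

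First I would apply Theorem~\ref{mainthm} to $f$ and $g$ with the common open $U_0$: the $U_0$-dense summands of $f_*\mathbf 1_X$ and of $g_*\mathbf 1_{X'}$ are isomorphic in $S_Y$. Next, since $Y$ is smooth, both $f : X \to Y$ and $g : X' \to Y$ are geometric local systems on $Y$, so condition~(D) guarantees that every summand of $f_*\mathbf 1_X$ and of $g_*\mathbf 1_{X'}$ already has dense support. Hence ``$U_0$-dense summand'' means ``the whole object'', and we obtain
\[
  f_*\mathbf 1_X \;\cong\; g_*\mathbf 1_{X'}.
\]

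It remains to identify $g_*\mathbf 1_{X'}$. Consider the Cartesian square with vertices $X' = Y \times F$, $F$, $Y$, $\ast$, horizontal arrows $\mathrm{pr}_2$ and $t_F$, and vertical arrows $g = \mathrm{pr}_1$ and $t_Y$. Since $t_F : F \to \ast$ is proper, base change (BC3) furnishes an isomorphism of functors $t_Y^*\,(t_F)_* \xrightarrow{\ \sim\ } g_*\,\mathrm{pr}_2^{\,*}$; evaluating on $\mathbf 1_F = t_F^*\mathbf 1$ and using the pseudofunctoriality identity $\mathrm{pr}_2^{\,*}\mathbf 1_F = \mathbf 1_{X'}$ gives
\[
  g_*\mathbf 1_{X'} \;\cong\; t_Y^*\bigl((t_F)_*\mathbf 1_F\bigr),
\]
which is precisely the constant (i.e.\ trivial) local system on $Y$ with value the $S$-cohomology of the fibre $F$. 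Combining the two displays proves the proposition. I do not expect a serious obstacle here: the only genuinely fiddly point is the reduction to irreducible source and target needed to invoke Theorem~\ref{mainthm}, which is dispatched by the componentwise matching described above; everything else is a direct application of the results already established.
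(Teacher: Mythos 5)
Your proof is correct and takes essentially the same route as the paper's. The paper's one-paragraph argument is: over a dense Zariski open $U$ where the family is trivial, $f_*\mathbf{1}_X$ is the geometric extension of the constant $F$-family, and the globally trivial family $Y\times F \to Y$ gives the same geometric extension. Your version unpacks this into its constituents, invoking Theorem~\ref{mainthm} directly (rather than its repackaging as Theorem~\ref{thm:main2}) together with condition~(D), using smoothness of $Y$ to ensure both $f$ and $\mathrm{pr}_1:Y\times F\to Y$ are geometric local systems so that all summands are dense; you also spell out the (BC3) base-change identification $g_*\mathbf 1_{Y\times F}\cong t_Y^*\bigl((t_F)_*\mathbf 1_F\bigr)$ and the reduction to irreducible components, both of which the paper leaves implicit. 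Same idea, slightly more detail.
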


\begin{proof}
Let $F$ be a fibre of this morphism. Then if $f^{-1}(U)\cong F\times U$, then $f_*\mathbf{1}_X$ is isomorphic to the geometric extension of the constant $F$ local system over $U$. But the trivial family $F\times X\rightarrow X$ also gives the geometric extension, giving the result.
\end{proof}
 \begin{remark}
 By a similar argument, if $f$ is étale locally trivial, then $f_*\mathbf{1}$ is trivial in any base change formalism with coefficients of characteristic zero.
 \end{remark}

By Proposition \ref{ICprop}, over $\mathbb{Q}$, the geometric
extension $\mathscr{E}_{\mathbb{Q}}(Y,\mathscr{L})$ is determined by
the $\mathbb{Q}$ local system $\mathscr{L}_*\mathbf{1}_V$ on $U$ within
$Y$, being isomorphic to $IC(Y,\mathscr{L}_*\mathbf{1}_V)$. The
following example shows that this is exceptional behaviour, and that
geometric extensions in general are not determined by the $S$-local
systems $\mathscr{L}_*\mathbf{1}_V$. They require the map.

\begin{ex}[The Legendre family of elliptic curves]\label{ex:legendrefamily}
    Consider the following projective family $E_t$ of elliptic curves \[\begin{tikzcd}        X\arrow[d,"\pi"]\arrow[r]&\mathbb{A}^1_{\mathbb{C}}\times \mathbb{P}_{\mathbb{C}}^2\arrow[dl,"\text{pr}"]\\
    \mathbb{A}^1_{\mathbb{C}}
    \end{tikzcd}
    \]
    given by: \[Y^2Z=X(X-Z)(X-tZ).\]
    Here $t$ is the coordinate on $\mathbb{A}^1_\mathbb{C}$, and we view this family inside $\mathbb{A}^1_\mathbb{C}\times\mathbb{P}^2_{\mathbb{C}}$. This family is smooth away from $t\in\{0,1\}$. The total space of this family has two isolated singular points, the nodes of the nodal cubics in the fibres over $t=0$ and $t=1$. These singular points have tangent cone isomorphic to the cone on a smooth conic. Blowing up these two singular points resolves the singularities, giving the resolved family \[\tilde{X}\xrightarrow{\tilde{\pi}}\mathbb{A}^1_\mathbb{C}.\]
    For this new map, the fibres over $t\in\{0,1\}$ are each the union
    of two rational curves intersecting in two points transversely. (This is type $I_2$ in Kodaira's classification of elliptic fibres
    \cite{Kodaira}. This is often called the ``double banana'' 
    configuration.) So for the constructible base change formalism with coefficients in $k$, the stalks $i^*_t$ of $\tilde{\pi}_*\mathbf{1}$ are given by:
\[\begin{tabular}{ p{2.5cm}||p{1cm} p{1cm} p{1cm}  }

 $H^*$& 0 &1&2\\
 \hline
 $i_0^*\tilde{\pi}_*\mathbf{1}$& $k$   &$k$&$k^{\oplus 2}$\\
 $i_1^*\tilde{\pi}_*\mathbf{1}$&  $k$  &$k$   &$k^{\oplus 2}$\\
$i_t^*\tilde{\pi}_*\mathbf{1}$ if $t\neq 0,1$&$k$ & $k^{\oplus 2}$&  $k$\\
\end{tabular}\]

The monodromy of this family is nontrivial only in the middle
degree: \[H^1(E_t,k)\cong k^2.\] One may then compute (see
e.g. \cite[Part 1]{CMP}) that the monodromy of a small loop around either singular fibre over $\mathbb{Z}$ is similar to \[\begin{bmatrix}
1 & 2\\
0 & 1
\end{bmatrix}.\]
Thus, we observe that this monodromy action is trivial if the characteristic of $k$ is two, and in this case the associated local system on $\mathbb{A}^1_\mathbb{C}\setminus \{0,1\}$ is trivial. We note however that the geometric extension of this local system is always nontrivial, as a trivial local system $E\times \mathbb{A}^1_\mathbb{C}$ has two copies of $k[-1]$ in its stalk over its singular fibres, rather that the one copy in our family.

This example therefore shows that the geometric extension of a geometric local system cannot be deduced from just the knowledge of $\tilde{\pi}_*\mathbf{1}$ restricted to the open subset $\mathbb{A}^1_\mathbb{C}\setminus \{0,1\}$. This example also lets us observe the failure of the local invariant cycle theorem in characteristic $p$, as the specialisation map \[H^1(E_0)\rightarrow H^1(E_t)^\mu\] is not surjective.
\end{ex}
\begin{remark}
One may construct families of counterexamples as follows. Let  $\pi:X\rightarrow \mathbb{A}^1$ be proper with smooth source, smooth over $\mathbb{A}^1\setminus \{0\}$, such that the $n^{th}$ power base change of $\pi$ has smooth total space $\tilde{X}_n$.
\[\begin{tikzcd}    \tilde{X}_n\arrow[r]\arrow[d,"\pi_n"]&X\arrow[d,"\pi"]\\
    \mathbb{A}^1\arrow[r,"z\mapsto z^n"]&\mathbb{A}^1
\end{tikzcd}\]
Then the associated monodromy representation of $\pi_n$ has monodromy
the $n$th power of the monodromy of $\pi$. This allows one to
trivialise the monodromy if the order is finite, dividing $n$.
\end{remark}
Another feature of the decomposition theorem in the $\mathbb{Q}$ coefficient setting is that the pushforward $f_*\mathbf{1}_X$ is semisimple. This fails for more general coefficients, and can already be seen with smooth, projective maps with mod $p$ coefficients. This result and its proof don't require geometric extensions, but we've decided to include it as we are not aware of any examples of this phenomenon in the literature.

\begin{ex}[A non-semisimple geometric local system] \label{ex:nonss}
Let $S$ be the $\mathbb{F}_2$ constructible formalism, and let
$\pi:E\rightarrow X$ be an algebraic (étale local)
$\mathbb{P}^1_{\mathbb{C}}$ bundle over a smooth space $X$, with
nontrivial Brauer class in $H^3(X,\mathbb{Z})$. For instance, one may
take the tautological bundle over an algebraic approximation of the
classifying space $BPGL_2(\mathbb{C})$ (see e.g. \cite{BL}). Then $\pi_*\mathbf{1}_E$ is an extension of $\mathbf{1}_X$ by $\mathbf{1}_X[2]$, classified by an element of \[\text{Ext}^1(\mathbf{1}_X,\mathbf{1}_X[2])=H^3(X,\mathbb{F}_2).\] This element is the reduction modulo $2$ of the associated Brauer class in $H^3(X,\mathbb{Z})$, so does not vanish in this quotient, and gives the desired indecomposable local system. One may construct counterexamples more generally using the fact that if $f:X\rightarrow Y$ is any map, and the induced map $H^*(Y,\mathbb{F}_p)\rightarrow H^*(X,\mathbb{F}_p)$ is not injective, then $\mathbf{1}\rightarrow f_*\mathbf{1}$ cannot be split injective in $D^b_c(X,\mathbb{F}_p)$.
\end{ex}

Thus far in this section we have been considering applications for the
constructible derived category with field coefficients, but it is
worth emphasising that there are other examples, which have been shown
to be relevant to geometric representation theory.\footnote{The most
  famous examples are Kazhdan and Lusztig's computation of the
  equivariant K-theory of the Steinberg variety \cite{KL} and
  Nakajima's computation of the equivariant K-theory of quiver
  varieties \cite{Nakajima}. Note that both computations can be
  interpreted as the computation of an endomorphism of a direct image
  with K-theory coefficients.}

In particular, there is now an established theory, with a six functor formalism, for modules over any $A_\infty$ ring spectrum. Let's consider the ring spectrum $KU_p$, $p$ completed complex K theory. This is the ring spectrum that represents the cohomology theory $X\mapsto K^*(X)\otimes_\mathbb{Z} \mathbb{Z}_p$ on finite $CW$ complexes $X$, where $K^0(X)$ is the usual Grothendieck group of complex vector bundles on $X$. The formalism of $KU_p$ modules then gives rise to a smoothly orientable, Krull-Schmidt base change formalism for $p$ completed complex K theory $KU_p$, see Appendix \ref{app.KUp}.

By Theorem \ref{thm:main2}, we may then define the geometric extension for $p$ completed K theory.

\begin{defi}\label{defKtheory}
For an irreducible variety $Y$, the $KU_p$ geometric extension is the sheaf of $KU_p$ modules $\mathscr{E}_{KU_p}(Y)$. The geometric $K$-theory groups at $p$ are defined to be the homotopy groups of this indecomposable sheaf of $KU_p$ modules:\[\mathscr{E}^{*}_{KU_p}(Y):=\pi_{-*}(\mathscr{E}_{KU_p}(Y)).\]
\end{defi}

We end with some natural questions regarding these geometric K groups.

\begin{question}
    We have natural maps $K^*(Y)\rightarrow \mathscr{E}^*_{KU_p}(Y)$ for all $p$, and by Lemma \ref{comp}, the kernel of these maps are independent of our choices. We might call elements in this common kernel nonpure classes in (integral) K-theory. Is there a geometric, vector bundle description of these classes? The nontorsion part will be visible as nonpure classes in $\mathbb{Q}$ cohomology, what about the torsion?
\end{question}

One may also use the rationalised K theory spectrum $KU_\mathbb{Q}$ in the preceding definitions. In this case, the groups we obtain are just ordinary intersection cohomology, since in rational cohomology, geometric extensions are just intersection cohomology, and the Chern character gives an isomorphism of $E_\infty$ ring spectra $\text{ch}:KU_\mathbb{Q}\cong H_\mathbb{Q}$.
Though the groups are not new, this alternate description of intersection cohomology via K theory leads to the natural question of whether these geometric extensions can be categorified. Our main result gives, for a fixed base space $Y$, for any resolution $X\rightarrow Y$, an idempotent endomorphism of $K^*(X)\otimes \mathbb{Q}$ which cuts out $\mathscr{E}_{KU_\mathbb{Q}}(Y)$, and this image is independent of the resolution. Now specialise to the case where $X$ and $Y$ admit compatible affine pavings, so $X\times_Y X$ also admits such a paving. This occurs for instance in the theory of Schubert varieties with Bott-Samelson resolutions. In this situation, the rationalised Grothendieck group of coherent sheaves on $X$ is isomorphic to the rationalised topological K group of $X$, via the Chern character to Chow groups. The fundamental classes of subvarieties of $X\times X$ naturally act as endomorphisms of $D^b_{coh}(X)$ as kernels of Fourier-Mukai transforms, and this naturally categorifies the action on $K^*(X)$. This leads to the following (imprecise) question.

\begin{question}
    Let $X$ be an affine paved resolution of $Y$. Does there exist an idempotent endofunctor $E_{X/Y}$ of $D^b_{coh}(X)$ such that the image of $E_{X/Y}$ is an invariant of $Y$, which decategorifies to the idempotent cutting out $\mathscr{E}_{KU_\mathbb{Q}}(Y)$ inside $K^*_\mathbb{Q}(X)$? Furthermore, is this category independent of the resolution $X$?
\end{question}

\section{Appendix}

\subsection{$KU_p$ modules}\label{app.KUp}

Here we will give a short introduction to the smoothly orientable base change formalism of $KU_p$ modules. We will start with ordinary topological K theory. This is a cohomology theory built from the Grothendieck group of complex vector bundles over $X$:
\[X\mapsto K^0(X):=Gr(Vec_{\mathbb{C}}/X).\]
This is the zeroth of a series of functors $K^i$, which form a cohomology theory in the sense of satisfying the Eilenberg-Steenrod axioms (except the dimension axiom).
Bott's periodicity theorem (\cite{bott1959stable}, also see \cite{hatcherVB}) implies that this cohomology theory is represented by a two periodic sequential spectrum $KU$ with component spaces:
\begin{align*}
    KU_{2i}&\cong \mathbb{Z}\times BU\\
    KU_{2i+1}&\cong U.
\end{align*}
Here $U$ is the infinite unitary group, and $BU$ is the union of the infinite complex Grassmannians $BU(n)$. The tensor product on vector bundles gives a homotopy coherent commutative multiplication law on this spectrum, so $KU$ is naturally an $E_\infty$-ring.

The (higher) coherence of this multiplication law allows one to define a well-behaved $\infty$-category of module spectra. This $\infty$-category is stable, so it can be thought of as an enhancement of its triangulated homotopy category.

For any stable $\infty$-category $\mathcal{C}$, we have a notion of sheaves on a space $X$ valued in $\mathcal{C}$. We will not define this precisely, but in rough terms it gives an object for each open set, a morphism for each inclusion of open sets, a homotopy between the compositions for each pair of composable inclusions, and so on, such that an analogue of the sheaf condition holds.

The $\infty$-category of such $\mathcal{C}$-valued sheaves on a space $X$ is stable. If one restricts to suitable, locally compact spaces with well-behaved maps between them, such as algebraic varieties with algebraic maps, then we obtain the whole six functor formalism for $\mathcal{C}$-valued sheaves, see e.g. \cite{VolpeM}. Furthermore, one may restrict to constructible $\mathcal{C}$-valued sheaves. Constructibility is then preserved under these six functors, due to the good topological properties of algebraic maps. We will not need the inner workings of this construction. 
The following example shows why such a formal black box can still be useful.

\begin{ex}
Consider Example \ref{ex:A/pm}, interpreted within the K-theoretic framework. This whole example is formal, until we apply compactly supported cohomology to obtain the triangle:
\[\begin{tikzcd}
    H^*_c(\mathbb{RP}^{2n-1}\times \mathbb{R},k)\arrow[r]&H^{*-2}(\mathbb{P}_\mathbb{C}^{n-1},k)\arrow[d]\\
    &H^*(\mathbb{P}_\mathbb{C}^{n-1},k)\arrow[ul,"+1"]
\end{tikzcd}\]
If we instead used K theory, we would obtain the triangle: 
\[\begin{tikzcd}
    K^*_c(\mathbb{RP}^{2n-1}\times \mathbb{R})\arrow[r]&K^{*-2}(\mathbb{P}_\mathbb{C}^{n-1})\arrow[d]\\
    &K^*(\mathbb{P}_\mathbb{C}^{n-1})\arrow[ul,"+1"]
\end{tikzcd}\]
Then one may show formally that the vertical arrow is multiplication by the Thom class of the $KU$ orientable line bundle $\mathscr{O}(2)$, and that compactly supported K-theory of a compact space times $\mathbb{R}$ is the ordinary K-theory shifted by one. 
Since the Thom class is $1-2[H]$, and we know the K theory of $\mathbb{CP}^{n-1}$, this lets us easily compute the K theory of $\mathbb{RP}^{2n-1}$.
\end{ex}

\subsection{Localisation at $p$}
We wish to work with Krull-Schmidt categories everywhere, so we need to localise the K theory base change formalism to obtain $KU_p$ modules. This is a formal procedure, essentially given on integral objects by tensoring with the $p$-adic integers $\mathbb{Z}_p$ every place one sees a K group. For instance, to build the associated cohomology theory, we simply tensor with $\mathbb{Z}_p$. That this preserves the property of being a cohomology theory is immediate from flatness of $\mathbb{Z}_p$ over $\mathbb{Z}$, so this functor gives the associated spectrum $KU_p$ representing it.

We round out this section with a proof that $KU_p$ modules are a base change formalism on algebraic varieties.

\begin{prop}
    The base change formalism $Y\mapsto D^b_c(Y,KU_p)$ of sheaves of constructible $KU_p$ module spectra is a smoothly orientable Krull-Schmidt base change formalism on complex algebraic varieties, which satisfies condition (D).
\end{prop}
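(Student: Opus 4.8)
The plan is to verify the four properties in turn: that $Y \mapsto D^b_c(Y, KU_p)$ is (i) a base change formalism satisfying the axioms of Definition \ref{basechangeformalism} together with conditions \eqref{C1}--\eqref{C3}, (ii) smoothly orientable, (iii) Krull-Schmidt, and (iv) satisfies condition (D). For (i), the construction of the six-functor formalism for constructible sheaves of $E$-module spectra with $E = KU_p$ an $A_\infty$ (indeed $E_\infty$) ring spectrum is imported from \cite{VolpeM}: one takes $\mathscr{C}$ to be complex algebraic varieties, sets $S_Y = D^b_c(Y, KU_p)$, and the functors $f_*, f_!$ together with the natural transformation $c_f\colon f_! \to f_*$ and the adjunctions are part of that package. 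Properness and étaleness in the sense of Definition \ref{basechangeformalism} match the topological notions (as in Example \ref{runningex}), so (BC1)--(BC3) hold, (BC3) being proper base change in this setting. Each $S_Y$ is the homotopy category of a stable $\infty$-category, hence triangulated, giving \eqref{C1}; over a point $\mathbf{1}$ is the spectrum $KU_p$ itself, giving \eqref{C2}; and for irreducible $X$ the constant sheaf $\mathbf{1}_X$ is indecomposable because $\End(\mathbf{1}_X) = KU_p^0(X)$ has no nontrivial idempotents once we know the coefficient ring $\pi_0 KU_p = \mathbb{Z}_p$ is a (connected) local ring and $X$ is connected --- this last point is essentially the content of (iii) restricted to the unit, so I would fold it into the Krull-Schmidt discussion.

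For (ii), smooth orientability, the argument is already given in Example \ref{3.9} for $KU$: any real vector bundle with a complex structure is $KU$-orientable via the Atiyah--Bott--Shapiro Thom class \cite[III, \S 11]{AB-Clifford}, the stable normal bundle of a complex manifold carries a complex structure, hence smooth varieties are $KU$-smooth. Localisation at $p$ only tensors the relevant $K$-groups with the flat $\mathbb{Z}$-algebra $\mathbb{Z}_p$, so the Thom class and the fundamental class survive: a smooth variety is $KU_p$-orientable, i.e.\ $S$-smooth in the sense of Definition \ref{Orientable}. Concretely one observes that the isomorphism $\mathbf{1}_M \to \omega_M[-2d]$ exhibiting $KU$-smoothness is sent by the (exact, symmetric monoidal) base-change functor $D^b_c(-, KU) \to D^b_c(-, KU_p)$ to the corresponding isomorphism for $KU_p$.

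For (iii), the Krull-Schmidt property, I would invoke the three sufficient conditions listed after Definition \ref{KS-definition}: the coefficient ring $R = \End(\mathbf{1}_\ast) = \pi_0 KU_p = \mathbb{Z}_p$ is a complete local ring; for $\mathscr{F}, \mathscr{G} \in S_Y$ the group $\Hom_{S_Y}(\mathscr{F}, \mathscr{G}) = \pi_0 \operatorname{map}(\mathscr{F}, \mathscr{G})$ is a finitely generated $\mathbb{Z}_p$-module, which follows from constructibility (a cell-by-cell dévissage reduces to $KU_p$-cohomology of the closures of strata, and $KU_p^*$ of a complex algebraic variety is finitely generated over $\mathbb{Z}_p$ since $KU^*$ is finitely generated over $\mathbb{Z}$ and $\mathbb{Z}_p$ is Noetherian); and $S_Y$, being the homotopy category of a stable $\infty$-category, is idempotent complete. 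These three give that every indecomposable has a local endomorphism ring and that the functors of the formalism are Krull-Schmidt in the sense of Definition \ref{KS-definition}. The main obstacle here is the finiteness of $\Hom$-groups: one must be careful that ``constructible'' in the $KU_p$-module setting really does imply the finite generation, since unlike for field coefficients the stalks are themselves perfect $KU_p$-module spectra rather than finite-dimensional vector spaces; the key input is that a perfect $KU_p$-module has finitely generated homotopy groups over $\mathbb{Z}_p$, together with the finiteness of a cell structure on each stratum closure, assembled via the (finitely many) attaching triangles of the stratification.

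Finally, for (iv), let $\mathscr{L}\colon V \to U$ be a geometric local system, i.e.\ a smooth proper surjective map with smooth target. I would argue that $\mathscr{L}_* \mathbf{1}_V$ is a sheaf whose restriction to each connected component of $U$ is, by smooth proper base change, a local system (locally constant sheaf of perfect $KU_p$-modules) with stalk $KU_p^*(\mathscr{L}^{-1}(u))$; hence every indecomposable summand of $\mathscr{L}_* \mathbf{1}_V$ restricts nontrivially to every open of $U$ and so has dense support, which is condition (D). The cleanest phrasing uses homotopy invariance: since $\mathscr{L}$ is a fibration in the relevant sense, $\mathscr{L}_* \mathbf{1}_V$ is locally constant on $U$, and a nonzero locally constant sheaf on a connected (here irreducible, hence connected) base has no summand supported on a proper closed subset. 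This completes the verification of all four properties, so $Y \mapsto D^b_c(Y, KU_p)$ is a smoothly orientable, Krull-Schmidt base change formalism satisfying condition (D), as claimed.
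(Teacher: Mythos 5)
Your proof follows the same outline as the paper's (import the six-functor formalism from Volpe, get orientability from Example~\ref{3.9} via the Atiyah--Bott--Shapiro Thom class, check the Krull-Schmidt sufficient conditions, argue density from local constancy), and most of it is sound. Your use of the ungraded coefficient ring $R = \End(\mathbf{1}_\ast) = \pi_0 KU_p = \mathbb{Z}_p$ with the \S\ref{sec-KS} conditions verbatim is a legitimate variant: the paper instead works with the graded ring $\mathbb{Z}_p[t,t^{-1}]$ and a graded version of those conditions, but either choice, combined with the d\'evissage to finitely many strata and a finite good cover, yields the needed finite generation. Your argument for condition~(D) via ``a nonzero locally constant sheaf on a connected base has all-or-nothing support'' is essentially the same as the paper's contractible-neighbourhood argument.

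There is, however, one genuine gap: the assertion that ``$S_Y$, being the homotopy category of a stable $\infty$-category, is idempotent complete'' is false. Stable $\infty$-categories (and their triangulated homotopy categories) are not automatically idempotent complete; for instance the category of bounded complexes of finitely generated \emph{free} modules over a ring with nonfree projectives is stable but not idempotent complete. Idempotent completeness of $D^b_c(Y, KU_p)$ therefore requires a real argument. The paper's route is to fix a stratification $\lambda$, identify $\lambda$-constructible $KU_p$-sheaves with the functor $\infty$-category $[EP_{\lambda,\infty}(Y), KU_p\perf]$ via exit paths, and deduce idempotent completeness from accessibility of a small $\infty$-category. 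An alternative fix for your argument would be to observe that $D^b_c(Y, KU_p)$ sits inside the presentable (hence idempotent complete) $\infty$-category of all $KU_p$-module sheaves, and that a retract of a $\lambda$-constructible sheaf is again $\lambda$-constructible (stalkwise, a retract of a perfect $KU_p$-module is perfect; restrictionwise, a retract of a locally constant sheaf is locally constant) --- but you must actually say this, rather than appeal to a nonexistent general fact about stable $\infty$-categories.
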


\begin{proof}
First, the fact that this is a base change formalism entails many compatibilities which follow from the construction, and Lurie's proper base change theorem (Chapter 7, \S 3 of \cite{LurieHTT}). One may find a streamlined proof of these properties in \cite{VolpeM}. For orientability, note that orientability is just an existence statement for elements in $K^{BM}_{2d}(X)\otimes_\mathbb{Z} \mathbb{Z}_p$. In particular, this is implied by the orientability of integral K-theory on complex manifolds, see Example \ref{3.9}. To check condition $(D)$, note that density holds trivially if the fibre bundle is trivial, and for any two points $x,y$ we may choose a contractible neighbourhood $U_{x,y}$ of $x$ and $y$. Restricting our geometric local system to $U_{x,y}$ gives a topologically trivial bundle, giving the density result.
It remains to check the Krull-Schmidt property of this base change formalism. We first claim it suffices to check the following conditions\footnote{This is slightly different to the conditions in \S \ref{sec-KS}, though the proof is the same.}.
\begin{itemize}
\item The ``ring of coefficients" $\mathbb{Z}_p[t,t^{-1}]:=\End(\mathbf{1}_\ast)$ is a graded complete local ring.
\item For any $\mathscr{F},\mathscr{G}$ in $D^b_c(X,KU_p)$, the group $\Hom^*(\mathscr{F},\mathscr{G})$ is a finitely generated graded $\mathbb{Z}_p[t,t^{-1}]$ module.
\item The category $D^b_c(X,KU_p)$ has split idempotents.
\end{itemize}
To see that these suffice, first note that the endomorphism algebra of any indecomposable object is a finite graded $\mathbb{Z}_p[t,t^{-1}]$ module by the second condition. The graded version of the idempotent lifting lemma (Corollary 7.5, \cite{eisenbud2013commutative}), and splitting of idempotents implies the endomorphism ring of any indecomposable object is local. The Krull-Schmidt property of functors then follows from the fact that for any finite morphism of local rings $\phi:R\rightarrow S$, we have $\phi(J(R))\subset J(S)$, which is immediate from Nakayama's Lemma.

It remains to check that these conditions hold for sheaves of $KU_p$ modules. The first condition is immediate by Bott Periodicity, as these are the homotopy groups of $KU_p$. For the finiteness of the second condition, since $\mathbb{Z}_p[t,t^{-1}]$ is Noetherian, one may apply open closed decomposition triangles to reduce to the case of morphisms between locally constant $KU_p$ modules on a smooth variety. We then can find a finite good cover of contractible open sets trivialising these $KU_p$ modules, and an induction with the Mayer-Vietoris sequence gives the result. Finally, to check idempotent completeness, we may assume our sheaves of $KU_p$ modules are constructible with respect to a fixed stratification $\lambda$. For a fixed stratification $\lambda$, we have the associated exit path $\infty$-category $EP_{\lambda,\infty}(X)$, and we may identify the category of $\lambda$ constructible sheaves of $KU_p$ modules with the functor category $[EP_{\lambda,\infty}(X),KU_p\perf]$ (see Theorem A.9.3 \cite{LurieHA}). As $KU_p\perf$ is accessible, this functor category is accessible (see \cite{LurieHTT} Proposition 5.4.4.3), and thus since this functor $\infty$ category is small, accessibility is equivalent to idempotent completeness (see Corollary 5.4.3.6 \cite{LurieHTT}).
\end{proof}

\subsection{Commuting diagrams}

In this section we prove the existence of the compatibility diagrams for the convolution isomorphism \ref{conviso}.

This can be broken into two distinct parts, the more formally $2$-categorical Proposition \ref{DIAGRAM}, and the orientation compatibility, Proposition \ref{convisocomp}.

Let us first recall the setup. Our space $Y$ is irreducible, with Zariski open set $U$, and $X,X'$ are two smooth spaces over $Y$. The following diagram will be our reference for the maps involved in the convolution isomorphism.

\[\begin{tikzcd}
    X_U\times_U X'_U\arrow[dd,"\tilde{g}_U"]\arrow[rr,"\tilde{f}_U"]\arrow[dr,"\hat{j}"]&&X'_U\arrow[dd,"g_U" near start]\arrow[dr,"\underline{j}'"]&\\
    &X\times_Y X'\arrow[dd,"\tilde{g}" near start]\arrow[rr,"\tilde{f}" near start]&&X'\arrow[dd,"g"]\\
    X_U\arrow[rr,"f_U" near start]\arrow[dr,"\underline{j}"']&&U\arrow[dr,"j"]&\\
    &X\arrow[rr,"f"]&&Y
\end{tikzcd}\]

Our first proposition is the following:

\begin{prop}\label{DIAGRAM}
The following diagram commutes, where the horizontal maps are our convolution isomorphisms, and the vertical maps are restriction followed by base change:
\[\begin{tikzcd}
    \Hom(f_!\_,g_*\_)\arrow[r,"\tau"]\arrow[d]&\Hom(\tilde{g}^*\_,\tilde{f}^!\_)\arrow[d]\\
    \Hom(j^*f_!\_,j^*g_*\_)\arrow[d,"\simeq"]&\Hom(\hat{j}^*\tilde{g}^*\_,\hat{j}^*\tilde{f}^!\_)\arrow[d,"\simeq"]\\
    \Hom({f_U}_!\underline{j}^*\_,{g_U}_*{\underline{j}'}^*\_)\arrow[r,"\tau"]&\Hom({\tilde{g}_U}^*\underline{j}^*\_,{\tilde{f}_U}^!{\underline{j}'}^*\_)
\end{tikzcd}\]
\end{prop}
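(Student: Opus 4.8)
The statement is that a large square of categories and natural transformations commutes, where the two horizontal arrows are instances of the convolution isomorphism $\tau$ of Definition \ref{conviso} (for the maps $f,g$ over $Y$ and for $f_U,g_U$ over $U$), and the composite vertical arrows are ``restrict along the \'etale map $j$ (resp.\ $\hat\jmath$), then use base change to rewrite the restricted functors''. Since $\tau$ is itself \emph{defined} as a composite of five elementary isomorphisms (two adjunction isomorphisms, the properness isomorphism $c_g\colon g_!\xrightarrow{\sim}g_*$ pulled back, the base-change isomorphism $f^!g_!\cong \tilde g_!\tilde f^!$, and the properness isomorphism for $\tilde g$), the plan is to break the big square into a vertical stack of smaller squares, one for each of these five elementary steps, and check each one commutes. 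Concretely I would insert, between the top row and the bottom row of the diagram in Proposition \ref{DIAGRAM}, the intermediate functors
\[
\Hom(\_,f^!g_*\_),\quad \Hom(\_,f^!g_!\_),\quad \Hom(\_,\tilde g_!\tilde f^!\_),\quad \Hom(\_,\tilde g_*\tilde f^!\_)
\]
appearing in Definition \ref{conviso}, each equipped with its own ``restrict-then-base-change'' vertical arrow down to the corresponding functor on $U$ (or on $X_U\times_U X_U'$), and then verify that each of the six resulting rectangles commutes.

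\textbf{The individual rectangles.} Three kinds of compatibility arise. (i) \emph{Adjunction vs.\ restriction}: the squares for the two adjunction steps ($\Hom(f_!\_,g_*\_)\cong\Hom(\_,f^!g_*\_)$ and $\Hom(\_,\tilde g_*\tilde f^!\_)\cong\Hom(\tilde g^*\_,\tilde f^!\_)$) commute because the unit/counit of an adjunction $(f_!,f^!)$ is compatible with base change along an \'etale map --- this is the standard statement that the \'etale-localised adjunction is induced from the global one, which one extracts from (BC3) together with the fact that $j^*\cong j^!$. (ii) \emph{Properness isomorphism $c$ vs.\ restriction}: the two squares involving $c_g$ and $c_{\tilde g}$ commute because $c$ is a \emph{natural transformation of pseudofunctors} (part of the data in Definition \ref{basechangeformalism}), hence compatible with the pullback pseudofunctors $j^*$, $\underline\jmath^*$, $\hat\jmath^*$; this is pure naturality of $c$ against the $2$-cells. (iii) \emph{Base-change isomorphism $f^!g_!\cong\tilde g_!\tilde f^!$ vs.\ restriction}: this is the ``base change for base change'' compatibility --- restricting the Beck--Chevalley isomorphism for the square $(f,g,\tilde f,\tilde g)$ along $j$ recovers the Beck--Chevalley isomorphism for the restricted square $(f_U,g_U,\tilde f_U,\tilde g_U)$. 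This follows from the coherence (pseudofunctoriality) of $S_!$ and $S_*$ and the hexagon/cocycle axioms relating the three pullback squares that stack up in the big commutative cube drawn before Proposition \ref{DIAGRAM}.

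\textbf{Bookkeeping and the main obstacle.} The real content is not any single square but the diagram chase assembling them, and in particular keeping track of the suppressed structural $2$-isomorphisms (the associativity constraints $f_*g_*\cong(fg)_*$, etc., which Remark \ref{basechangedef} flags as ``a critical part of our input data''). The main obstacle is therefore purely organisational: one must fix, once and for all, a compatible system of identifications among the composites $j^*f_!\cong f_{U!}\underline\jmath^*$, $\hat\jmath^*\tilde g^*\cong\tilde g_U^*\underline\jmath^*$, $\underline\jmath'^*f^!\cong\cdots$, and verify that the pentagon/hexagon coherence axioms for the pseudofunctors --- applied to the composable squares in the cube --- make all the intermediate rectangles commute \emph{on the nose} rather than merely up to a further $2$-cell. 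I expect this to be a lengthy but mechanical verification, naturally carried out by drawing the full pasting diagram of the cube and its faces; I would present it as a sequence of labelled sub-diagrams, one per elementary step of $\tau$, rather than as a single monolithic chase. (The reference to ``Proposition \ref{DIAGRAM} in the Appendix'' in the main text signals exactly that the authors intend this to be done in detail there, and Proposition \ref{convisocomp} then handles separately the orthogonal question of compatibility of the chosen \emph{orientations}.)
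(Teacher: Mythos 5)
Your plan follows essentially the same approach as the paper: expand the square by inserting an intermediate column for each of the elementary isomorphisms composing $\tau$, and verify each resulting rectangle commutes by a standard compatibility (adjunction, naturality of $c$, Beck--Chevalley) against restriction-plus-base-change. The paper organises the bookkeeping slightly differently---it uses a $3\times 4$ grid, separates the ``restriction'' and ``base change'' vertical steps, and bundles the $c_g$/Beck--Chevalley/$c_{\tilde g}$ stages into one large ``middle square'' verified by a single big pasting diagram, whereas you propose treating them individually---but the underlying decomposition and the three types of compatibility you identify are exactly those the paper relies on.
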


Our orientation compatibility is the following:

\begin{prop}\label{convisocomp}
    There exist orientations of $\overline{\Delta_\alpha}$ and $X\times_Y X'$ such that the following diagram exists and is commutative, where the fundamental class of $\overline{\Delta_\alpha}$ restricted to $X_U\times_U X'_U$ maps to $\alpha_*\mathbf{1}$ under the associated convolution isomorphism.

    \[
      \begin{tikzpicture}[xscale=2,yscale=1]
        \node (H0) at (0,0) {$\Hom({f_U}_*\mathbf{1},{g_U}_*\mathbf{1})$};
        \node (H1) at (0,2) {$\Hom(f_*\mathbf{1},g_*\mathbf{1})$};
        \node (S0) at (2,0) {$S_{*,2d}(X_U\times_U X'_U)$};
        \node (S1) at (2,2) {$S_{*,2d}(X\times_Y X')$};
        \draw[->] (H0) to node[above] {$\sim$} (S0);
        \draw[->] (H1) to node[above] {$\sim$} (S1);
        \draw[->] (H1) to (H0);
        \draw[->] (S1) to (S0);
        \node (ur) at (3,2) {$[\overline{\Delta_\alpha}]$};
        \node (e1) at (2.72,2) {$\ni$};
        \node (lr) at (3,-1) {$[\Delta_\alpha]$};
        \node (e1) at (2.72,-.5) {\rotatebox{135}{$\in$}};
        \node (l) at (0,-1) {$\alpha_*\mathbf{1}$};
        \node (e1) at (0,-.6) {\rotatebox{90}{$\in$}};
        \draw[|->] (ur) to (lr);
        \draw[|->] (l) to (lr);
      \end{tikzpicture}\]
\end{prop}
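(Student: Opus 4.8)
The plan is to break the verification into two independent checks, corresponding exactly to the two factorizations that appear in the diagram: the left vertical (a statement purely about the functors $f_*, g_*$ and restriction $j^*$) and the right vertical (a statement about compactly supported $S$-homology $S^!_{2d}(-)$ and the cycle class of $\overline{\Delta_\alpha}$), joined along the bottom by the local convolution isomorphism of Proposition \ref{convisolocally}. Concretely, I would first invoke Proposition \ref{DIAGRAM} (= Proposition \ref{convisolocally}), which already gives commutativity of the outer square once we identify the functorial restriction maps with the geometric restrictions; this reduces the claim to producing compatible \emph{orientations} of $\overline{\Delta_\alpha}$ and of $X \times_Y X'$ so that the elements $[\overline{\Delta_\alpha}] \in S^!_{2d}(X \times_Y X')$ and $[\Delta_\alpha] \in S^!_{2d}(X_U \times_U X'_U)$ are genuinely defined (i.e. the relevant $\gamma$'s are isomorphisms over the smooth loci), that the right-hand restriction map sends one to the other, and that the bottom convolution isomorphism sends $[\Delta_\alpha]$ to $\alpha_*\mathbf{1}$.

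For the orientation bookkeeping: since $f, g$ are resolutions, $\alpha : X_U \xrightarrow{\sim} X'_U$ is an isomorphism over $U$, so its graph $\Delta_\alpha \subset X_U \times_U X'_U$ is isomorphic to $U$ (hence to $X_U$), and its closure $Z := \overline{\Delta_\alpha} \subset X \times_Y X'$ is an irreducible variety of dimension $d := d_X$ whose smooth locus contains $\Delta_\alpha \cap (\text{smooth locus of } X_U)$. Because $S$ is smoothly orientable and $Z$ admits a resolution of singularities (resolution of singularities applied to $Z$), the proposition in \S\ref{Mainarg} (orientability via resolutions) gives an $S$-orientation $\gamma_Z : \mathbf{1} \to \omega_Z[-2d]$ which is an isomorphism over the smooth locus; pushing forward along the proper inclusion $Z \hookrightarrow X \times_Y X'$ via the covariant functoriality of $S^!_\bullet$ (Definition \ref{BorelMoore}) produces the class $[\overline{\Delta_\alpha}] \in S^!_{2d}(X \times_Y X')$. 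Restricting to the étale-open $X_U \times_U X'_U$ commutes with this pushforward and with the closure operation (since $Z \cap (X_U \times_U X'_U) = \Delta_\alpha$ is already closed there), which yields the compatibility of the two vertical legs on the right; this is essentially the statement that forming the fundamental class of a subvariety is étale-local, together with functoriality of restriction under proper pushforward.

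Finally, the identification that the bottom convolution isomorphism $\tau_{f_U, g_U}$ carries $[\Delta_\alpha]$ to $\alpha_*\mathbf{1} \in \Hom({f_U}_*\mathbf{1}, {g_U}_*\mathbf{1})$ is where the chosen orientation of $X$ (and hence of $X_U$, and hence of $\Delta_\alpha \cong X_U$) gets pinned down: unwinding the definition of $\tau$ (Definition \ref{conviso}) in the case where $g_U$ restricted over the graph is an isomorphism, the class $[\Delta_\alpha] \in S^!_{2d}(X_U \times_U X'_U) \cong \Hom(\widetilde{g_U}^*\mathbf{1}, \widetilde{f_U}^!\omega_{X'_U}[-2d])$ corresponds, after the isomorphisms identifying $X_U \times_U X'_U$ near $\Delta_\alpha$ with $X_U$, to the identity-like map whose pushforward is $\alpha_*$; one must choose the orientation of $X_U$ compatibly with that of $X'_U$ under $\alpha$ (i.e. $\alpha^*(\text{orientation of } X'_U) = \text{orientation of } X_U$), which is possible because $\alpha$ is an isomorphism. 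I expect the main obstacle to be precisely this last step: carefully tracing the orientation/fundamental-class of $\Delta_\alpha$ through all the unit/counit and base-change isomorphisms in the definition of $\tau$ to see that it literally becomes $\alpha_*\mathbf{1}$, and making the global choices of $\gamma_{\overline{\Delta_\alpha}}$ and $\gamma_{X \times_Y X'}$ consistent with the local ones — the diagram chase is formal but long, and keeping the various $2$-isomorphisms coherent (as flagged in Remark \ref{basechangedef}) is the delicate part.
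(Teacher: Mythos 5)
Your proposal follows essentially the same route as the paper's proof: invoke Proposition \ref{DIAGRAM} for the commutativity of the square involving the two convolution isomorphisms, orient $\overline{\Delta_\alpha}$ via resolution of singularities and restrict/transport to get compatible orientations of $\Delta_\alpha$ and $X_U$, then reduce the remaining content to checking that the local convolution isomorphism carries $[\Delta_\alpha]$ to $\alpha_*\mathbf{1}$. You correctly identify the last step as the real work, but you leave it as a promissory note (``the diagram chase is formal but long''); in the paper this is precisely where the effort is concentrated. The paper's device is to recast the fundamental-class morphism $[\Delta_\alpha] : \mathbf{1} \to \omega_{X_U \times_U X'_U}[-2d]$ as the evaluation on $\mathbf{1}$ of a composite of natural transformations $\tilde g^* \to \Delta_* \to \Delta_! \to \tilde f^!$ (so the whole computation takes place at the level of pseudofunctors), and then to verify a single large $2$-categorical coherence diagram whose commutativity reduces to naturality and pseudofunctoriality of $S_!$, comparing the clockwise contour (the mate of $[\Delta_\alpha]$) with the anticlockwise contour (the mate of $f_! \to f_* \xrightarrow{\alpha_*} g_*$). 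Your sketch is therefore a correct blueprint that matches the paper's strategy, but it stops short of the reformulation-plus-coherence-diagram that actually discharges the final claim.
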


For notational convenience, in the proof of this proposition we will
use $(\_,\_)$ to denote morphism sets, and we will only use $f,g$ and
$j$, noting that the decorations are uniquely determined by the location
within the diagram. 

\begin{proof}
We first prove Proposition \ref{DIAGRAM}. We may expand the diagram in Proposition \ref{DIAGRAM} into the following:

\[\begin{tikzcd}
    (f_!,g_*)\arrow[r]\arrow[d]&(\_,f^!g_*)\arrow[r]\arrow[d]&(\_,{g}_*{f}^!)\arrow[d]\arrow[r]&({g}^*,{f}^!)\arrow[d]\\
    (j^*f_!,j^*g_*)\arrow[d]\arrow[r]&(\_,f^!j_*j^*g_*)\arrow[d]&(\_,{g}_*{j}_*{j}^*{f}^!)\arrow[r]\arrow[d]&({j}^*{g}^*,{j}^*{f}^!)\arrow[d]\\
    ({f}_! {j}^*,{g}_* {{j}'}^*)\arrow[r]&(\_,{{j}}_*
    {f}^!{g}_* {{j}}^*)\arrow[r]&
    (\_ ,{{j}}_* {{g}}_*
    {{f}}^!{j}^*)\arrow[r]&({{g}}^*{j}^*,{{f}}^!{j}^*)
\end{tikzcd}\]
Only the commutativity of the middle square is not standard, and its commutativity follows from the commutativity of the following diagram:
\[\begin{tikzcd}
{(\_,f^!g_*)} \arrow[rr] \arrow[d]                 &                                       & {(\_,g_*f^!)} \arrow[d] \arrow[r]        & {(\_,g_*j_*j^*f^!)} \arrow[dd]             &                   \\
{(\_,f^!g_!)} \arrow[rr] \arrow[rd] \arrow[d]      &                                       & {(\_,g_!f^!)} \arrow[d]                  &                                          &                   \\
{(\_,f^!j_*j^*g_!)} \arrow[r] \arrow[d]            & {(\_,j_*j^*f^!g_!)} \arrow[r] \arrow[d] & {(\_,j_*j^*g_!f^!)} \arrow[d] \arrow[r]  & {(\_,j_*g_*j^*f^!)} \arrow[rddd] \arrow[d] &                   \\
{(\_,j_*f^!j^*g_!)} \arrow[ru] \arrow[r] \arrow[d] & {(j^!,j^!f^!g_!)} \arrow[d] \arrow[r] & {(j^!,j^!g_!f^!)} \arrow[d] \arrow[r]  & {(j^*,g_*j^!f^!)} \arrow[d]              &                   \\
{(\_,j_*f^!g_!j^*)} \arrow[d] \arrow[rd]           & {(j^!,f^!j^!g_!)} \arrow[d]           & {(j^!,g_!j^!f^!)} \arrow[d] \arrow[r]  & {(j^*,g_!j^!f^!)} \arrow[d]              &                   \\
{(\_,j_*f^!g_*j^*)} \arrow[r]                      & {(j^!,f^!g_!j^!)} \arrow[r]           & {(j^!,g_!f^!j^!)} \arrow[r] \arrow[rd] & {(j^*,g_!f^!j^!)} \arrow[d]              & {(\_,j_*g_*f^!j^*)} \\
                                                 &                                       &                                        & {(\_,j_*g_!f^!j^!)} \arrow[ru]             &                  
\end{tikzcd}\]
The commutativity of the internal faces are all standard compatibilities of base change and naturality.
\end{proof}

\begin{remark}
  A general statement proving commutativity of diagrams of this type is to be found in the thesis of the first author \cite{Hone-thesis}.
\end{remark}

It remains to prove Proposition \ref{convisocomp}. This entails proving the commutativity of the diagram, and for compatible choices of orientation, that the restriction of $[\overline{\Delta_\alpha}]$ to $S_{2d_X}^!(X_U\times_U X'_U)$ corresponds to $\alpha_*\mathbf{1}$ under the convolution isomorphism.

To show the existence of this diagram, we evaluate on the constant sheaf, and use the chosen orientation $\mathbf{1}\xrightarrow{\gamma}\omega_X[-2d]$ of $X$
to give the identifications with $S_{*,2d}(X\times_Y X')$.\\
\begin{adjustwidth}{-2.2cm}{0pt}
\begin{tikzcd}[
  column sep={10em,between origins}]
{\Hom(f_*\mathbf{1},g_*\mathbf{1})} \arrow[r] \arrow[d] & {\Hom(f_!\mathbf{1},g_*\mathbf{1})} \arrow[r] \arrow[d] & {\Hom(\tilde{g}^*\mathbf{1},\tilde{f}^!\mathbf{1})} \arrow[d] \arrow[r,"\gamma"] & {\Hom(\mathbf{1},\tilde{f}^!\omega[-2d])} \arrow[d] \arrow[r] & {S_{*,2d}(X\times_Y X')} \arrow[d] \\
{\Hom({f_U}_*\mathbf{1},{g_U}_*\mathbf{1})} \arrow[r]   & {\Hom({f_U}_!\mathbf{1},{g_U}_*\mathbf{1})} \arrow[r]   & {\Hom(\tilde{g}_U^*\mathbf{1},\tilde{f}_U^!\mathbf{1})} \arrow[r,"\gamma_U"]       & {\Hom(\mathbf{1},{\tilde{f}_U}^!\omega[-2d])} \arrow[r]       & {S_{*,2d}(X_U\times_U X'_U)}      
\end{tikzcd}
\end{adjustwidth}
Here the commutativity of the second square is Proposition \ref{DIAGRAM}. Thus it remains to prove the identification of the fundamental class along this isomorphism.
\begin{prop}
Given an isomorphism $\alpha$ over $U$, such that $X_U,X_U'$ are smooth:
    \[\begin{tikzcd}
    X_U\arrow[rr,"\alpha"]\arrow[dr,"f"']&&X_U'\arrow[dl,"g"]\\
        & U
    \end{tikzcd}\]
    Then for a choice of orientation $\gamma$ of $X_U$, and induced compatible orientation of $\Delta_\alpha$, the element $[\Delta_\alpha]$ in $H_{2d}(X_U\times_U X_U')$ corresponds to $\alpha_*\mathbf{1}$ in $\Hom(f_*\mathbf{1},g_*\mathbf{1})$ via the convolution isomorphism induced by $\gamma$. We may also assume that this orientation of $\Delta_\alpha$ arises as restriction from an orientation of $\overline{\Delta_\alpha}$.
\end{prop}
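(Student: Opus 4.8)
The plan is to compute the image of $\alpha_*\mathbf{1}$ under the convolution isomorphism $\tau$ of Definition \ref{conviso} (followed by the $\gamma$-identification with $S$-homology), step by step, and to recognise the answer as the pushforward along the graph of $\alpha$ of the fundamental class of $X_U$. Write $W:=X_U\times_U X_U'$, let $p:=\tilde g_U$ and $q:=\tilde f_U$ be the two projections (both proper, being pullbacks of $g_U,f_U$), and let $i:X_U\hookrightarrow W$ be the graph $i(x)=(x,\alpha(x))$, a closed immersion with $p\circ i=\mathrm{id}_{X_U}$ and $q\circ i=\alpha$ whose image is $\Delta_\alpha$; thus $i$ identifies $X_U$ with $\Delta_\alpha$. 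First I would dispose of the orientations. Since $\overline{\Delta_\alpha}$ is an irreducible variety admitting a resolution of singularities and $S$ is smoothly orientable, it is $S$-orientable; choose an $S$-orientation of $\overline{\Delta_\alpha}$ and \emph{define} $\gamma:\mathbf{1}_{X_U}\to\omega_{X_U}[-2d]$ to be its restriction along $\Delta_\alpha\cong X_U$. By construction the orientation of $\Delta_\alpha$ is then the restriction of one on $\overline{\Delta_\alpha}$, and (transporting along $\alpha$) it also orients $X_U'$.

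Next I would reduce to $\alpha=\mathrm{id}_{X_U}$. The isomorphism $(x,x')\mapsto(x,\alpha(x'))$ of $X_U\times_U X_U$ with $W$ is an isomorphism of pullback squares identifying the convolution square for $(f_U,g_U,\alpha)$ with the one for $(f_U,f_U,\mathrm{id})$; it sends the diagonal to $\Delta_\alpha$, and, combined with the equivalence $\alpha_*:S_{X_U}\xrightarrow{\sim}S_{X_U'}$, it sends $\alpha_*\mathbf{1}$ to $c_{f_U}\in\Hom(f_{U!}\mathbf{1},f_{U*}\mathbf{1})$, i.e. to $\mathrm{id}_{f_{U*}\mathbf{1}}$. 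Because $\tau$ is assembled purely from adjunction units and counits, the structural $2$-isomorphisms of the pseudofunctors, and the base change isomorphisms of (BC3) --- all natural --- it is compatible with this identification (an instance of the naturality proved in \cite{Hone-thesis}). So it remains to show: $\tau(\mathrm{id}_{f_{U*}\mathbf{1}_{X_U}})$, post-composed with $q^{!}\gamma$ to land in $S^!_{2d}(W)=\Hom(\mathbf{1}_W,\omega_W[-2d])$, equals $\Delta_*\gamma$, the image of $\gamma\in S^!_{2d}(X_U)$ under the proper pushforward by the diagonal $\Delta:X_U\hookrightarrow W$.

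Finally I would unwind both sides. Chasing $c_{f_U}$ through the five isomorphisms defining $\tau$ --- the $(f_{U!}\dashv f_U^{!})$-adjunction; cancelling the resulting factor $f_U^{!}c_{f_U}$ (an isomorphism, as $f_U$ is proper); the base change isomorphism $f_U^{!}f_{U!}\cong p_!q^{!}$ of (BC3); the isomorphism $c_p$ ($p$ proper); and the $(p^{*}\dashv p_*)$-adjunction --- collapses $c_{f_U}$ to the morphism
\[
\mathbf{1}_W=p^{*}\mathbf{1}_{X_U}\xrightarrow{\ \beta\ }q^{!}\mathbf{1}_{X_U},\qquad \beta:=\text{the $(p^{*}\dashv p_*)$-adjoint of }\Big(\mathbf{1}_{X_U}\xrightarrow{\eta}f_U^{!}f_{U!}\mathbf{1}_{X_U}\xrightarrow{\sim}p_!q^{!}\mathbf{1}_{X_U}\xrightarrow{c_p}p_*q^{!}\mathbf{1}_{X_U}\Big).
\]
On the other side, since $\Delta$ is a section of $q$ one has $\Delta^{!}q^{!}=\mathrm{id}$, $\Delta_*=\Delta_!$ and $\Delta_*\omega_{X_U}=\Delta_!\Delta^{!}\omega_W$; unwinding the definition of the proper pushforward in $S$-homology and using naturality of the counit $\varepsilon_\Delta:\Delta_!\Delta^{!}\to\mathrm{id}$ against $q^{!}\gamma$, one checks that $\Delta_*\gamma$ equals $q^{!}\gamma$ composed with $\mathbf{1}_W\xrightarrow{u_\Delta}\Delta_*\mathbf{1}_{X_U}=\Delta_!\Delta^{!}q^{!}\mathbf{1}_{X_U}\xrightarrow{\varepsilon_\Delta}q^{!}\mathbf{1}_{X_U}$. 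Since $q^{!}\gamma$ is an isomorphism, everything reduces to the single identity $\beta=\varepsilon_\Delta\circ u_\Delta$ under these identifications --- which I expect to be the main obstacle. It is the statement that the base change isomorphism $f_U^{!}f_{U!}\cong p_!q^{!}$ precomposed with the unit of $(f_{U!}\dashv f_U^{!})$ is, after the adjunction manipulations, exactly the unit/counit data of the diagonal section (``the diagonal class is the identity correspondence''). I would prove it by a diagram chase from the triangle identities together with the compatibility of the base change morphism of (BC3) with units and counits, the bookkeeping of the pullback-square coherences being the only genuine difficulty --- which is precisely why it is worth stripping off $\alpha$ first. The commutativity of the outer square in Proposition \ref{convisocomp} then follows formally, its inner square being Proposition \ref{DIAGRAM} and the remaining squares being the evident naturality of restriction to $U$.
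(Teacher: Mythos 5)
Your proposal is correct in structure, but it organizes the argument along a genuinely different axis than the paper. The paper keeps $\alpha$ general throughout: it first interpolates the fundamental-class morphism through the functor chain $\tilde g^*\to\Delta_*\to\Delta_!\to\tilde f^!$ and then runs a single large pseudofunctor-coherence diagram in which $\alpha$ enters via the identity of composites $f_!\xrightarrow{\alpha_!}g_!\to g_*=f_!\to f_*\xrightarrow{\alpha_*}g_*$; going clockwise around the diagram gives the mate of $[\Delta_\alpha]$, going anticlockwise gives the mate of $\alpha_*\mathbf{1}$. You instead first use the square-isomorphism $(x,x')\mapsto(x,\alpha(x'))$ to reduce to $\alpha=\mathrm{id}$, then unwind the five constituent isomorphisms of $\tau$ explicitly to collapse $c_{f_U}$ to a single morphism $\beta:\mathbf{1}_W\to q^!\mathbf{1}$, and finally reduce everything to the identity $\beta=\varepsilon_\Delta\circ u_\Delta$ (``the diagonal is the identity correspondence''). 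This is cleaner conceptually, and isolating that final identity is a nice way to name what is really being proved. What the reduction buys you in clarity it costs you in an extra naturality verification: compatibility of $\tau$ with isomorphisms of pullback squares is itself a coherence statement of the same flavour as the diagram the paper writes out, so the total bookkeeping load is comparable rather than smaller. Both proofs, including the paper's, ultimately defer the full diagram chase to general pseudofunctor coherence (and to \cite{Hone-thesis}); your honest flagging of $\beta=\varepsilon_\Delta\circ u_\Delta$ as ``the main obstacle'' is exactly where the paper's large commuting diagram is doing its work, so nothing essential is missing. One small point to tighten: the statement as given starts from a chosen orientation $\gamma$ of $X_U$ and asserts one \emph{may assume} the induced orientation of $\Delta_\alpha$ extends to $\overline{\Delta_\alpha}$; your proof (like the paper's) silently reverses this and \emph{defines} $\gamma$ by restricting a chosen orientation of $\overline{\Delta_\alpha}$, which is fine but worth saying explicitly, since an arbitrary orientation of $\Delta_\alpha$ need not extend.
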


\begin{proof}
We first construct the orientations required by choosing an orientation of $\overline{\Delta_\alpha}$ via resolution of singularities. We may then restrict this to $\Delta_\alpha$, and transport structure to $X_U$ to get our desired compatible orientations. From here all convolution morphisms and fundamental classes are with respect to this choice.

First, let's show that this fundamental class morphism $[\Delta_\alpha]:\mathbf{1}\rightarrow \omega_{X_U\times_U X'_U}[-2d]$ is isomorphic (after whiskering) to an evaluation on the constant sheaf of a morphism of functors.
That is, there are canonical morphisms of functors \[\tilde{g}^*\rightarrow\Delta_*\rightarrow \Delta_!\rightarrow \tilde{f}^!\] such that the following diagram commutes, and the composite is the fundamental class in $H_{2d}(X_U\times_U X'_U)$:

    \[\begin{tikzcd}
        \mathbf{1}\arrow[r]\arrow[dr]&\tilde{g}^*\mathbf{1}\arrow[r]\arrow[d]&\Delta_!\mathbf{1}\arrow[r]&\tilde{f}^!\mathbf{1}\arrow[dr]\\
        &\Delta_*\mathbf{1}\arrow[ur]\arrow[r]&\Delta_!\omega_{\Delta}[-2d]\arrow[u]\arrow[r]&\tilde{f}^!\omega_{\Delta}[-2d]\arrow[u]\arrow[r]&\omega_{X_1\times X_2}[-2d]
    \end{tikzcd}\]
The commutativity of the rightmost triangle follows from the fact that
our orientations were chosen compatibly.

This reduces the problem to a coherence problem for pseudofunctors, so consider the following diagram.   \[\begin{tikzcd}     \text{Id}\arrow[r]\arrow[d,equal]&\tilde{g}_*{\Delta_{\alpha}}_*\arrow[r]&\tilde{g}_*{\Delta_{\alpha}}_!\arrow[r]\arrow[d]&\tilde{g}_*\tilde{f}^!\arrow[d]\\
\text{Id}\arrow[d]\arrow[rr]&&\tilde{g}_!{\Delta_{\alpha}}_!\arrow[d]\arrow[r]&\tilde{g}_!\tilde{f}^!\arrow[d]\\
    f^!f_!\arrow[d]\arrow[rr]&&f^!f_!\tilde{g}_!\Delta_{{\alpha}!}\arrow[d]\arrow[r]&f^!f_!\tilde{g}_!\tilde{f}^!\arrow[d]\\
    f^!g_!\arrow[d,equal]\arrow[dr]&&f^!f_!\tilde{g}_!\Delta_!\Delta^!\tilde{f}^!\arrow[dl]\arrow[d]\arrow[ur]&f^!g_!\tilde{f}_!\tilde{f}^!\arrow[d]\\
    f^!g_!\arrow[r]\arrow[rrr,equal,bend right]&f^!g_!\tilde{g}_!\Delta_!\Delta^!\tilde{f}^!\arrow[r]&f^!g_!\tilde{f}_!\Delta_!\Delta^!\tilde{f}^!\arrow[r]\arrow[ur]&f^!g_!\arrow[r]&f^!g_*
    \end{tikzcd}\]
Going clockwise around the diagram gives the mate of $[\Delta_\alpha]$, by the previous discussion, while going anticlockwise yields the mate of the morphism $f_!\xrightarrow{\alpha_!} g_!\rightarrow g_*$ which equals the composite $f_!\rightarrow f_*\xrightarrow{\alpha_*}g_*$, giving the desired compatibility.   

All squares in this diagram commute by naturality, or using that $S_!$ is a pseudofunctor. Only the curved identity morphism is not immediate, this is $f^!g_!$ applied to the following diagram:\[\begin{tikzcd}   \Id\arrow[r]\arrow[dr]\arrow[ddrr,bend right,equal]&\tilde{g}_!\Delta_!\Delta^!\tilde{f}^!\arrow[r]&\tilde{f}_!\Delta_!\Delta^!\tilde{f}^!\arrow[d]\\
    &(\tilde{f}\circ \Delta)_!(\tilde{f}\circ \Delta)^!\arrow[dr]\arrow[ur]&\tilde{f}_!\tilde{f}^!\arrow[d]\\
    &&\Id
    \end{tikzcd}\]
This then commutes by the definition of the horizontal isomorphisms, and the general unit compatibilities of pseudofunctors. 
\end{proof}

\bibliography{Biblio}
\bibliographystyle{plain}

\end{document}